\documentclass[12pt]{amsart}

\usepackage{amsthm,amsfonts, amssymb, amscd}
\usepackage{youngtab}
\usepackage{ytableau}
\usepackage[all]{xy}
\usepackage{subcaption}

\usepackage{euscript}
\usepackage{cite}

\usepackage[colorinlistoftodos]{todonotes}

\usepackage{tikz}
\usetikzlibrary{matrix,arrows, decorations.markings}
\usetikzlibrary{positioning}

\usepackage[normalem]{ulem}

\usepackage{hyperref}
\hypersetup{%
    linktoc=page
}

\let\oldtocsection=\tocsection
\let\oldtocsubsection=\tocsubsection
\renewcommand{\tocsection}[2]{\hspace{0em}\oldtocsection{#1}{#2}}
\renewcommand{\tocsubsection}[2]{\hspace{1em}\oldtocsubsection{#1}{#2}}

\newtheorem{thm}{Theorem}[section]
\newtheorem{lemma}[thm]{Lemma}

\newtheorem{prop}[thm]{Proposition}

\newtheorem{conjecture}[thm]{Conjecture}
\newtheorem{problem}[thm]{Problem}
\theoremstyle{remark}
\newtheorem{rem}[thm]{Remark}
\theoremstyle{remark}
\newtheorem{example}[thm]{Example}
\theoremstyle{definition}

\theoremstyle{definition}

\theoremstyle{definition}

\newtheorem{definition}[thm]{Definition}

\newtheorem{construction}[thm]{Construction}

\numberwithin{equation}{section}

\newcommand{\C}{\mathbb{C}}           
 
\newcommand{\diag}{\operatorname{diag}}

\newcommand{\codim}{\operatorname{codim}}
\newcommand{\Inv}{\operatorname{Inv}}

\newcommand{\fb}{{\mathfrak b}}

\newcommand{\fg}{{\mathfrak g}}

\newcommand{\fl}{{\mathfrak l}}

\newcommand{\fz}{\mathfrak z}

\newcommand{\gm}{\mu}

 \newcommand{\cb}{\mathcal{B}}

 \newcommand{\cm}{\mathcal{M}}

\newcommand{\cz}{\mathcal{Z}}

\renewcommand{\tilde}{\widetilde}

\renewcommand{\bar}[1]{\overline{#1}}

\newcommand{\Hess}{\mathcal{H}\mathrm{ess}}
\newcommand{\tab}{T}
\newcommand{\tabExtra}[1]{{T}_{#1}}
\newcommand{\len}{\ell en}
\newcommand{\inv}{{\sf Inv}}
\newcommand{\col}{\mathrm{col}}

\newcommand{\semi}{\mathsf{S}}
\newcommand{\nilp}{\mathsf{N}}
\newcommand{\mx}{\mathsf{X}}
\newcommand{\maxset}{\mathcal{M}_{\lambda,\hess}}
\newcommand{\imax}{i_{\mathrm{max}}}

\newcommand{\Poin}{\mathsf{Poin}}

\newcommand{\Fl}{\mathcal{F}\ell ags}

\newcommand{\hinv}[1]{{\sf Inv}(#1)}

\newcommand{\Lint}[1]{{\sf int}_{#1}}
\newcommand{\swap}[1]{{\sf swap}(#1)}
\newcommand{\hess}{{\mathbf h}}

\begin{document}

\title[Dimensions of type $A$ Hessenberg varieties over a fixed sheet]{Dimensions of type $A$ Hessenberg varieties over a fixed sheet}

\author{Megumi Harada}
\address{Department of Mathematics and
Statistics\\ McMaster University\\ 1280 Main Street West\\ Hamilton, Ontario L8S4K1\\ Canada}
\email{haradam@mcmaster.ca}
\urladdr{\url{http://www.math.mcmaster.ca/~haradam/}}

\author{Martha Precup}
\address{Department of Mathematics\\ Washington University in St. Louis \\ One Brookings Drive \\ St. Louis, Missouri  63130 \\ USA }
\email{martha.precup@wustl.edu}
\urladdr{\url{https://www.math.wustl.edu/~precup/}}

\author{Colleen Robichaux}
\address{Department of Mathematics\\ University of California, Davis \\ One Shields Ave.\\ Davis, California 95616\\ USA  }
\email{robichaux@ucdavis.edu}
\urladdr{\url{https://www.math.ucdavis.edu/~robichaux/}}

\keywords{Hessenberg varieties, flag varieties, Springer fibers, Young tableaux}
\date{August 26, 2026}

\begin{abstract} 
Hessenberg varieties $\mathcal{H}\mathrm{ess}(\mathsf{X},\mathbf{h})$ are subvarieties of the flag variety parameterized by a Hessenberg function $\mathbf{h}: [n] \to [n]$ and a matrix $\mathsf{X} \in \mathfrak{gl}_n(\mathbb{C})$. In recent work, Goldin and the second author showed the existence of flat degenerations of Hessenberg varieties to nilpotent Hessenberg varieties over the minimal sheet. This implies that all Hessenberg varieties over the minimal sheet have the same dimension.  Our main result generalizes this dimension result to arbitrary sheets. Specifically, we prove that for a fixed Hessenberg function $\mathbf{h}:[n] \to [n]$, all Hessenberg varieties $\mathcal{H}\mathrm{ess}(\mathsf{X},\mathbf{h})$ defined in the type $A$ flag variety by linear operators $\mathsf{X}$ from the same sheet of the Lie algebra $\mathfrak{gl}_n(\mathbb{C})$ have the same dimension.
\end{abstract}

\maketitle


\section{Introduction}
Let $n$ be a positive integer and $\Fl(\C^n)$ denote the flag variety of nested sequences $V_\bullet=(0\subset V_1\subset V_2 \subset \cdots \subset V_n = \C^n)$ such that $\dim (V_i)=i$ for all $i$. Such a sequence $V_\bullet$ is called a \emph{full flag}. Let $\hess: \{1,2,\ldots, n\} \to \{1,2,\ldots n\}$ be a weakly increasing function such that $\hess(i)\geq i$ for all $i$, called a \emph{Hessenberg function}. The subvariety $\Hess(\mx, \hess)$ of flags $V_\bullet$ such that $\mx (V_i)\subseteq V_{\hess(i)}$ is the (type $A$) \emph{Hessenberg variety} in $\Fl(\C^n)$ associated to the Hessenberg function $\hess$ and matrix $\mx$. Their study lies in the intersection of algebraic geometry, representation theory, and combinatorics,
among other research areas.

Much about the geometry and topology of Hessenberg varieties remains unknown, including the relationships between Hessenberg varieties with different choices of parameters $\mx$ and $\hess$. 
The primary goal of this manuscript is to study the dimensions of Hessenberg varieties for a fixed Hessenberg function $\hess: [n] \to [n]$ as we vary $\mx \in \mathfrak{gl}_n(\C)$. This was motivated in part by recent work of Goldin and the second author ~\cite{Goldin-Precup}, in which they prove the existence of a flat degeneration for any Hessenberg variety defined over $\mx$ in the minimal sheet $\fg_{(2,1,\ldots, 1)}$ to a nilpotent Hessenberg variety defined over the same sheet (cf. Definition~\ref{definition.sheet}). In particular, their result proves that $\dim \Hess(\mx,\hess)$ is the same for all $\mx\in \fg_{(2,1,\ldots 1)}$. It is also known that the Hessenberg varieties defined over the regular sheet $\fg_{(n)}$ of all conjugacy classes of maximum dimension form a flat family~\cite{ADGH}. This led the second author and Goldin to conjecture that a specific one-parameter family of Hessenberg varieties defined over any sheet $\fg_\lambda$ for a fixed Hessenberg function $\hess$ is flat, and in particular, that all Hessenberg varieties defined over a fixed sheet $\fg_\lambda$ with a fixed Hessenberg function $\hess$ have the same dimension~\cite[Conjecture 3.15]{Goldin-Precup}.

It is instructive to consider the following well-known special case. 
The collection of Hessenberg varieties contains the well-studied \emph{Grothendieck--Springer fibers} $\cb_\mx := \Hess(\mx, \hess)$ where $\hess(i)=i$ for all $i$. By definition, the variety $\cb_\mx$ is the subvariety of all flags $V_\bullet$ fixed by $\mx$, that is, $\mx(V_i)\subseteq V_i$ for all $i$.  These varieties are fibers of the Grothendieck--Springer resolution, which plays a prominent role in geometric representation theory and the Springer correspondence.

In the Grothendieck--Springer case, facts about dimensions are known, as we now recall. 
Let $\lambda = (\lambda_1\geq \lambda_2 \geq \cdots \geq \lambda_\ell >0)$ be a partition of $n$ and let $\lambda^t$ denote its transpose. The number 
\begin{equation}\label{eqn.Springer.equality}
n(\lambda):= \sum_{i=1}^\ell (i-1)\lambda _i = \sum_{i=1}^\ell {\lambda_i^t \choose 2}
\end{equation}
is a ubiquitous statistic in algebraic combinatorics and arises in the study of Hall--Littlewood and Macdonald polynomials. The equality~\eqref{eqn.Springer.equality} also has a geometric context. Let $\nilp_\lambda$ be an $n\times n$ nilpotent matrix of Jordan type $\lambda$ and let $\semi_{\lambda}$ be a semisimple matrix with $\ell$ distinct eigenvalues $c_1, c_2, \ldots, c_\ell$ such that $c_i$ occurs with multiplicity $\lambda_i$. Then $\dim \cb_{\nilp_\lambda} = \sum_{i=1}^\ell (i-1)\lambda_i$ and $\dim \cb_{\semi_{\lambda}} = \sum_{i=1}^{\ell}{\lambda_i \choose 2}$. In particular, the formula~\eqref{eqn.Springer.equality} takes the form
\begin{equation}\label{eqn.Springer.equality2}
\dim \cb_{\nilp_\lambda} = \dim \cb_{\semi_{\lambda^t}}.
\end{equation}
The matrices $\nilp_\lambda$ and $\semi_{\lambda^t}$ both belong to a larger union of conjugacy classes in $\mathfrak{gl}_n(\C)$ called a \emph{sheet}. Each sheet in $\mathfrak{gl}_n(\C)$ is indexed by a partition $\lambda$ of $n$ and denoted here by $\fg_\lambda$. Using Springer theory, work of Lusztig~\cite[\S 2.1-2.3, \S 3.2-3.3]{Lusztig} implies that $\dim \cb_\mx=n(\lambda)$ for all matrices $\mx$ in the sheet $\fg_\lambda$. In particular, the dimension of the Grothendieck--Springer fiber over any $\mx\in \fg_\lambda$ is equal to that of any other.

Our main result generalizes the above statement for Grothendieck--Springer fibers to all Hessenberg functions, providing a partial answer to the conjecture of Goldin and the second author.

\begin{thm}[Theorem~\ref{thm.main.general} below]\label{thm.main.intro} Let $n$ be a positive integer and let $\lambda$ be a partition of $n$. Let $\hess: [n] \to [n]$ be a Hessenberg function. Then the dimension of all Hessenberg varieties associated to $\hess$ over the fixed sheet $\fg_\lambda$ is constant.  More precisely, for all $\mx,\mathsf{Y}\in \fg_\lambda$, we have $\dim \Hess(\mx,\hess) = \dim \Hess(\mathsf{Y},\hess)$. 
\end{thm}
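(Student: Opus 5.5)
We reduce to a combinatorial comparison: compute $\dim \Hess(\mx,\hess)$ for every $\mx\in\fg_\lambda$ by a formula in terms of nilpotent data, and check that the formula depends only on $\lambda$.

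\textbf{Step 1: description of the sheet.} Recall (Definition~\ref{definition.sheet}) that $\mx\in\fg_\lambda$ means the following. Let $\mx$ have Jordan decomposition $\mx=\semi+\nilp$, with distinct eigenvalues $c_1,\dots,c_k$. Let $\nilp$ restricted to the $c_j$-generalized eigenspace $E_j$ have Jordan type $\mu^{(j)}$. Then $\lambda^t=\sum_j (\mu^{(j)})^t$, i.e.\ the columns of $\lambda$ are the union of the columns of the $\mu^{(j)}$ (equivalently, $\lambda=\mu^{(1)}\cup\cdots$ after transposing). So the data varying within the sheet is an unordered decomposition of the multiset of columns of $\lambda$ into blocks, together with distinct scalars.

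\textbf{Step 2: reduce to a semisimple-part decomposition.} The centralizer $L=\prod_j GL(E_j)$ of $\semi$ acts on $\Hess(\mx,\hess)$. Use the standard Bruhat-type decomposition: $\Fl(\C^n)$ is stratified by the relative positions of a flag with respect to the decomposition $\C^n=\bigoplus E_j$. Each stratum is indexed by a word $w$ recording which $E_j$ each successive quotient $V_i/V_{i-1}$ ``belongs to'', and each stratum is an affine bundle over $\prod_j \Fl(E_j)$. Intersecting with $\Hess(\mx,\hess)$, the $(\semi-c_j)$-invertibility on other blocks forces the affine fibre coordinates to satisfy conditions determined by $\hess$ and $w$ alone. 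The base is then a product of nilpotent Hessenberg varieties $\Hess(\nilp|_{E_j},\hess_w^{(j)})$ for induced Hessenberg functions $\hess_w^{(j)}$. This yields
\[
\dim\Hess(\mx,\hess)=\max_w\Big(d(w,\hess)+\sum_j \dim\Hess(\nilp|_{E_j},\hess^{(j)}_w)\Big),
\]
with $d(w,\hess)$ an explicit count of pairs $i<i'$ with different letters and $i'\le \hess(i)$ (an ``inversion'' statistic $\hinv{\cdot}$). This is the analogue of the semisimple computation in Precup's work.

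\textbf{Step 3: nilpotent dimensions combinatorially.} For each nilpotent factor, use the known tableau formula. The dimension of $\Hess(\nilp_\mu,\hess)$ is the maximum over fillings $\tab$ of $\mu$ (row-strict, suitably) of the number of $\hess$-inversions $\hinv{\tab}$. This formula is presumably what the subsequent sections set up.

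\textbf{Step 4: the key combinatorial identity.} Show that the maximum in Step 2, with Step 3 substituted, equals the maximum of $\hinv{\tab}$ over fillings of $\lambda$ itself, i.e.\ equals $\dim\Hess(\nilp_\lambda,\hess)$. Both directions are needed.

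For ``$\le$'': given $w$ and fillings $\tab_j$ of the $\mu^{(j)}$, concatenate columns according to the column decomposition of $\lambda$. This produces a filling of $\lambda$ whose inversions dominate $d(w,\hess)+\sum\hinv{\tab_j}$, since entries from different blocks in the same row of $\lambda$ contribute exactly the cross pairs counted by $d$.

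For ``$\ge$'': split a maximal filling of $\lambda$ by columns, and read off $w$ from which block each value's column lies in.

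This step is the main obstacle. Cross-block pairs in $d(w,\hess)$ count all pairs, while tableau inversions only count pairs in appropriate relative row positions. To handle this, I would first try an exchange (``swap'') argument: show a maximizing filling of $\lambda$ can be chosen so that columns assigned to different blocks interact exactly as in $d$, using sliding/swap moves that do not decrease $\hinv{\cdot}$, and induct on $n$ by removing the entry $n$.

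Once Step 4 holds, every $\mx\in\fg_\lambda$ has $\dim\Hess(\mx,\hess)=\dim\Hess(\nilp_\lambda,\hess)$, which gives the theorem.
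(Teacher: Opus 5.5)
Your Steps 1--3 are a reasonable setup. Step 2 is essentially Precup's formula (Proposition~\ref{prop.gendim}) with $d(w,\hess)=\ell_\hess(u)$ for $u\in{}^{\mu}W$, so it counts only the cross-block pairs \emph{inverted} by $u$. Step 3 is Proposition~\ref{prop.nildim}. In Step 1 the formula should read $\lambda=\sum_j\mu^{(j)}$, parts added row by row, which is what your verbal description says.

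The real problem is that Step 4 is the whole theorem, and neither direction works as sketched. For ``$\le$'', concatenating columns gives a filling of $\lambda$ that need not be $\hess$-strict. A horizontal pair $i$ (left), $j$ (right) from different blocks can have $i>\hess(j)$. Then, by Proposition~\ref{prop.nildim}, that Schubert cell misses $\Hess(\nilp_\lambda,\hess)$ entirely, so there is no nilpotent cell to dominate anything.

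Already for $\mx=\semi_{\lambda^t}$, where every $\mu^{(j)}$ is a single column, this is the hard part of the paper. One sorts columns (Lemma~\ref{lem.inc-cols.ss}) and then repeatedly exchanges $j_0$ with $\imax$ (Definition~\ref{def.swap}). One proves via an explicit injection of $\hess$-inversions that this weakly increases $\ell_\hess$ (Lemma~\ref{lemma.SScells-swap}). It also strictly decreases $\ell$, so the process ends at an $\hess$-strict tableau (Lemmas~\ref{prop:lenSwaps} and~\ref{lem:swapCol}).

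When some $\mu^{(j)}$ has several columns, a further problem appears. Columns of $\lambda$ are ordered by length, so blocks interleave: for $\mu^{(1)}=(2,2)$ and $\mu^{(2)}=(3,1,1)$ one gets $\lambda=(5,3,1)$, with block $2$ in columns $1,4,5$. Then ``left of'' in $\lambda$ no longer tracks the block order recorded by $u$. Also, boxes adjacent in $\mu^{(j)}$ need not be adjacent in $\lambda$, which changes the right neighbour $k$ in the nil-inversion condition.

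The ``$\ge$'' direction has the mirror defect. Regrouping columns of a maximal filling of $\lambda$ into the shapes $\mu^{(j)}$ creates new adjacencies that need not be $\hess_u^{(j)}$-strict. Even in the semisimple case you must know that a maximizing $\hess$-strict tableau is column increasing (Lemma~\ref{lem.inc-cols.nilp}). You must also know that on such tableaux the two codimension sets have equal size (Lemma~\ref{lem.h-strict.set-inclusion}). Your closing ``exchange argument, inducting on $n$'' is a plan, not a proof.

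The idea you are missing is not to compare $\mx$ with $\nilp_\lambda$ directly. Instead compare it with the semisimple $\semi_\tau$ in the same sheet, where $\tau$ concatenates the transposes $(\lambda^{(i)})^t$ (Construction~\ref{constr.semi.tau}, Lemma~\ref{lemma.sheet.same}). Since $\tau$ refines $\mu$, you factor $w=yxu$ and use $\ell_\hess(xu)=\ell_\hess(u)+\ell_{\hess_u}(x)$ (Lemma~\ref{lemma.refinement}). This splits the $\semi_\tau$-cell dimension into $\ell_\hess(u)$ plus a sum over Levi factors of $\semi_{\tau^{(i)}}$-cell dimensions with Hessenberg function $\hess_u^{(i)}$.

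That matches Precup's formula for $\mx$ term by term once the nilpotent/semisimple comparison (Proposition~\ref{prop.steps}) is applied inside each $\mathfrak{gl}_{\mu_i}(\C)$ to $\nilp_{\lambda^{(i)}}$ versus $\semi_{\tau^{(i)}}$. Interactions between different blocks never need to be controlled, because the tableau combinatorics stays within one block. Since $\semi_\tau$ is conjugate to $\semi_{\lambda^t}$, Theorem~\ref{thm.ss-nilp} then finishes the proof.
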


The proof of Theorem~\ref{thm.main.intro} is combinatorial, leveraging formulas for the Betti numbers of Hessenberg varieties due to Tymoczko~\cite{Tymoczko2006} and the second author~\cite{Precup2013}. After reviewing the necessary background and establishing notation in Section~\ref{sec.notation}, we prove Theorem~\ref{thm.main.intro} for the nilpotent and semisimple cases in Section~\ref{sec.semisimple and nilpotent Hessenberg varieties} before extending it to the general setting in Section~\ref{sec.general case}. It would be interesting to find a geometric or representation-theoretic argument for this dimension phenomenon. Section~\ref{sec.open} presents related open questions in this direction.

\subsection*{Acknowledgments}  The authors are grateful to Rebecca Goldin for helpful conversations. The first author is partially supported by NSERC Discovery Grant RGPIN 2019-06567 and a Canada Research Chair Tier I Award. The second author is supported by NSF CAREER grant DMS 2237057. The third author was supported by NSF MSPRF No.~DMS 2302279.  We thank the Fields Institute for Research in the Mathematical Sciences in Toronto for hosting the 2022 Virtual Workshop for Women in Commutative Algebra and Algebraic Geometry, from which this project emerged.


\section{Notation and preliminaries}\label{sec.notation}

In this background section we establish notation and conventions.

\subsection{Tableaux, the symmetric group $S_n$, and associated combinatorics}\label{sec:comb}

Fix a positive integer $n$.
A \emph{composition $\gm$ of $n$}, written $\gm \vDash n$, is a list of positive integers $\gm=(\gm_1, \gm_2, \ldots, \gm_k)$ such that $n=\gm_1+\gm_2+\cdots + \gm_k$. In the notation of the previous sentence, we call $k$ the \emph{length of $\gm$}, written as $k= \len(\gm)$. We take the notational convention that $\gm_0 := 0$, and if $i>\len(\gm)$ then $\gm_i =0$.  For a composition $\gm=(\gm_1, \gm_2, \ldots, \gm_k)$ of length $k$, we define 
\begin{equation}\label{eq: def int s} 
\Lint{s}(\gm) :=\left\{i\in\mathbb{Z}  :  \sum_{j=0}^{s-1}\gm_j< i\leq   \sum_{j=0}^{s}\gm_j\right\}
\end{equation}
for $1 \leq s \leq k$. For example, if $\gm= (2,3)$, then
$\Lint{1}(\gm)=\{1,2\}$ and $\Lint{2}(\gm) = \{3,4,5\}$.

A \emph{partition $\lambda$ of $n$}, written $\lambda\vdash n$, is a composition of $n$ such that the list $\lambda=(\lambda_1, \lambda_2, \ldots, \lambda_k)$ satisfies $\lambda_1\geq \lambda_2\geq \cdots \geq \lambda_k$.
The \emph{Young diagram of shape $\lambda$} is a collection of boxes arranged into rows and columns corresponding to the parts in the partition $\lambda$.  We orient our diagrams using the English convention, so the rows are left justified with sizes decreasing from top to bottom.  We denote the transpose of $\lambda$ by~$\lambda^t$. 

Let $\lambda \vdash n$. A \emph{tableau} of shape $\lambda$ and content $[n]:=\{1,2,\cdots,n\}$ is a filling of each box of the Young diagram of $\lambda$ with an integer in $[n]$, with no repetition. We say a tableau is \textit{column increasing} if the entries within each column increase from top to bottom. The \textit{base filling} of $\lambda$ is the tableau obtained by filling the boxes of $\lambda$ with the integers $1$ through $n$ by
starting at the bottom of the leftmost column and moving up the column by increments of $1$, then moving to the lowest
box of the next column to the right, and so on.  For example, the base filling of $\lambda = (4,3,1)$ is the following:
\begin{equation}\label{eqn.base}
\ytableausetup{centertableaux} \begin{ytableau}3 & 5 & 7 & 8\\ 2 & 4 & 6\\ 1\end{ytableau}.
\end{equation}

Let $w \in S_n$ be a permutation of $[n]$. We specify $w$ by its one-line notation $w = [w(1) \, w(2) \cdots \, w(n)]$.
Associated to $w$ and a Young diagram of $\lambda$, we define the tableau $\tabExtra{w,\lambda}$ by relabeling $i$ in the base filling of $\lambda$ to be $w^{-1}(i)$. For example, for $\lambda = (4,3,1)$ and $w = [5 \, 8 \, 4\, 7 \, 3 \, 6 \, 2\, 1]$, we have $w^{-1} = [8 \, 7\, 5 \, 3 \, 1\, 6 \, 4 \, 2]$ and hence we obtain  
\begin{equation}\label{example.TwLambda}
\tabExtra{w,\lambda} = \ytableausetup{centertableaux} \begin{ytableau}5 & 1 & 4 & 2\\ 7 & 3 & 6\\ 8\end{ytableau}. 
\end{equation}

Throughout this paper, we will work with the data of a \emph{Hessenberg function} which is by definition a function $\hess: [n] \to [n]$ such that $\hess(i)\leq \hess(i+1)$ for all $i \in [n-1]$ and $\hess(i) \geq i$ for all $i \in [n]$. We typically notate a Hessenberg function by listing its values in sequence, so we write $\hess = (\hess(1),\hess(2),\cdots,\hess(n))$. 

We will need the following terminology. 

\begin{definition}\label{definition.h-strict}
Let $n$ be a positive integer, $w \in S_n$, and $\lambda \vdash n$ a partition of $n$. 
Suppose $\hess: [n] \to [n]$ is a Hessenberg function. We say that the tableau $\tabExtra{w,\lambda}$ is an \emph{$\hess$-strict tableau} if, for all  $i,j \in [n]$ which appear in horizontally adjacent boxes in $\tabExtra{w,\lambda}$ with $i$ to the left of $j$, we have that $i \leq \hess(j)$. We can informally (and visually) represent this condition as: 
\begin{equation}\label{eq:h-strict}
\ytableausetup{centertableaux} \begin{ytableau}i & j \end{ytableau} \Leftrightarrow i \leq \hess(j). 
\end{equation}
\end{definition}

\begin{example}
For $\hess=(2,4,4,5,6,7,8,8)$, we see that the tableau in~\eqref{example.TwLambda} is \emph{not} $\hess$-strict because we have, for instance, $\begin{ytableau} 7 & 3 \end{ytableau}$ appearing in $\tabExtra{w,\lambda}$, but $7 \not \leq \hess(3)=4$. 
\end{example} 

Recall that a pair $(i,j) \in [n]^2$  is called an \emph{inversion} of a permutation $w\in S_n$ if $i<j$ and $w(i)>w(j)$. We denote the set of inversions of $w$ by $\inv(w)$. 
The \emph{(Bruhat) length} $\ell(w)$ of $w$ is the number of inversions of $w$, or more precisely, 
$$
\ell(w) := \#\inv(w)=\#\{(i,j) \in [n]^2: i<j,\, w(i)>w(j)\} .
$$
Given a Hessenberg function $\hess: [n] \to [n]$, we also define the \emph{Hessenberg length} $\ell_\hess(w)$ of $w \in S_n$ to be 
$$
\ell_\hess(w) := \#\{ (i,j)\in \inv(w) :
j\leq \hess(i)\}.
$$
From the definitions it is clear that $\ell_\hess(w) \leq \ell(w)$ for any $\hess$ and $w$. We will also need to refer to a set naturally enumerating $\ell_\hess(w)$ so let
$$
\hinv{\hess,w} := \{  (i,j)\in \inv(w) :
j\leq \hess(i)\}
$$
be the \emph{$\hess$-inversions of $w$}.
Then, of course, $\ell_\hess(w) = \# \hinv{\hess,w}$. 

Before moving on, we recall the notion of a parabolic subgroup and coset decompositions in the symmetric group. 
Given a composition $\gm = (\gm_1,\gm_2, \ldots, \gm_k)$ of $n$, let 
\begin{equation}\label{eq: D mu}
D_\gm := \{\gm_1, \gm_1+\gm_2 , \ldots, \gm_1+\gm_2+\cdots +\gm_{k-1}\}\subseteq [n-1].
\end{equation}
We define the parabolic subgroup of $S_n$ associated to $\gm$ by 
\begin{equation}\label{eq: parabolic subgroup}
W_\gm := \langle s_i : i\in[n-1]\setminus D_\gm \rangle,
\end{equation}
where $s_i\in S_n$ denotes the simple transposition exchanging $i$ and $i+1$ (and acting as the identity on all other elements of $[n]$). 
Next, we define
\begin{equation}\label{eq: shortest right coset reps}
{^\gm}W := \textup{ shortest right coset representatives of the right cosets of $W_\gm\backslash W$}.
\end{equation}
It is well known that a permutation $w$ is contained in ${^\gm}W$ if and only if the descents of $w^{-1}$ are contained in $D_\gm$, that is:
\[
w\in {^\gm}W \Leftrightarrow \{i\in [n-1] : w^{-1}(i)>w^{-1}(i+1)\} \subseteq D_\gm.
\]
Another way to say this is that $w\in {^\gm}W $ if and only if the numbers from each of the sets $\Lint{s}(\gm)$ with $1\leq s \leq k$ appear in increasing order in the one-line notation for $w$ (when reading from left to right). 

\begin{example}
If $n=5$ and $\gm=(2,1,2)$, then $D_\gm = \{2, 3\}$ and $W_\gm = \langle s_1, s_4\rangle \simeq S_2\times S_2$. The permutation $w=[3,1,4,2,5]$ is an element of ${^\gm}W$ since $w^{-1}=[2,4,1,3,5]$ has descent set $\{2\}$. Equivalently, $w\in {^\gm}W$ because $\Lint{1}(\gm)=\{1,2\}$ and, respectively, $\Lint{3}(\gm)=\{4, 5\}$ appear in increasing order when we read the one-line notation for $w$ from left to right.
\end{example}

The following two lemmas are well-known properties of coset decompositions and length additive decompositions; see \cite[Exercise 13, pg.~23; Prop.~2.4.4]{BB05}.

\begin{lemma}\label{lem.cosets}  Let $w\in S_n$ and let $\gm\vDash n$. Then there exist a unique pair $y,v$ with $y \in W_\gm$ and $v \in {}^{\gm}W$ such that $w = yv$. Moreover, $\ell(w) = \ell(y)+\ell(v)$. 
\end{lemma}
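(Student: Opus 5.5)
The plan is the standard proof of the parabolic factorization, split into existence, uniqueness, and length additivity. Throughout we use the characterization recalled above: $v\in {^\gm}W$ if and only if, for each $s$, the values in $\Lint{s}(\gm)$ appear in increasing order in the one-line notation of $v$. We also use that $W_\gm \cong S_{\gm_1}\times\cdots\times S_{\gm_k}$ is the subgroup of permutations preserving each block $\Lint{s}(\gm)$ setwise. This holds because it is generated by the $s_i$ with $i,i+1$ in a common block.

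\textbf{Existence.} Fix $w$. For $y\in W_\gm$, left multiplication $w\mapsto yw$ changes the one-line notation $[w(1)\cdots w(n)]$ by replacing each value $w(p)$ with $y(w(p))$. Thus it permutes, among themselves, the values lying in each block $\Lint{s}(\gm)$, while the set of positions carrying values from each block stays fixed. Hence there is exactly one $y'\in W_\gm$ such that $v:=y'w$ has the values of every block in increasing left-to-right order. Concretely, $y'$ sends the $r$-th occurrence, read left to right, of a value from $\Lint{s}(\gm)$ to the $r$-th smallest element of $\Lint{s}(\gm)$. Setting $y=(y')^{-1}$ gives $w=yv$ with $v\in{^\gm}W$.

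\textbf{Uniqueness.} Suppose $yv=y'v'$ with $y,y'\in W_\gm$ and $v,v'\in{^\gm}W$. Then $v'=(y')^{-1}y\,v$ with $(y')^{-1}y\in W_\gm$. So $v'$ is obtained from $v$ by permuting the values within each block, and both have each block increasing in one-line notation. Since they occupy the same position sets, this forces $v=v'$, and then $y=y'$.

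\textbf{Length additivity.} This is the only step requiring a small computation. Count inversions in terms of values. The quantity $\ell(w)$ equals the number of pairs of values $a<b$ with $b$ appearing to the left of $a$ in the one-line notation of $w$. Split these pairs into two types:
\begin{itemize}
\item Pairs with $a,b$ in different blocks. Since $y$ preserves blocks and the relative order of blocks, and $w$ and $v$ put each block's values in the same positions, such a pair is an inversion of $w$ exactly when the corresponding pair is an inversion of $v$. Here we use that $y$ maps $\Lint{s}$ to itself, and every element of $\Lint{s}$ is less than every element of $\Lint{t}$ for $s<t$.
\item Pairs within one block. Such pairs give no inversions in $v$, since $v$ is increasing on each block. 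In $w$, they contribute exactly the inversions of the restriction of $y$ to that block, because $w$ lists the block's values as $y$ applied to an increasing sequence placed in the same positions.
\end{itemize}
Summing over blocks, the within-block pairs give $\ell(y)$, since the inversions of $y$ are all within blocks. Therefore $\ell(w)=\ell(y)+\ell(v)$.

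The main point to check carefully is the value-versus-position bookkeeping for left multiplication. With that in hand, every step is elementary; alternatively one may simply cite \cite[Prop.~2.4.4]{BB05}.
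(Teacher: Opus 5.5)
Your proof is correct. The paper does not prove this lemma; it cites the general Coxeter-group result \cite[Prop.~2.4.4]{BB05}, whose proof works for any parabolic subgroup of any Coxeter group using descent sets and the exchange property.

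You instead give a self-contained, type-$A$ argument in one-line notation. Existence and uniqueness come from sorting the values of each block $\Lint{s}(\mu)$ into increasing order by left multiplication. Length additivity comes from splitting value-inversions into between-block and within-block pairs.

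What your route buys is explicitness. Your split actually proves the finer statement $\inv(w)=\inv(v)\sqcup v^{-1}\inv(y)$ directly. The between-block inversions of $w$ are exactly $\inv(v)$. The within-block ones are the position pairs $(p,q)$ with $(v(p),v(q))\in\inv(y)$. This is precisely what the paper later extracts from Lemma~\ref{lem.lengthadd} and uses in Lemma~\ref{lem.cosets.tab}.

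Your argument also makes the recalled characterization of ${}^{\mu}W$ self-justifying. Since $\ell(w)=\ell(y)+\ell(v)\geq \ell(v)$, with equality only when $y=e$, the block-increasing element is the unique shortest element of its coset $W_\mu w$. So you need not take that characterization as given.

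The cited general route covers all Coxeter types, but it hides this concrete inversion bookkeeping.
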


\begin{lemma}\label{lem.lengthadd} Let $w\in S_n$ and suppose $x,z \in S_n$ satisfies $w=xz$ and $\ell(w) = \ell(x)+\ell(z)$. Then the set of inversions of $w$ decomposes as $\inv(w) = \inv(z)\sqcup z^{-1}\,\inv(x)$.
\end{lemma}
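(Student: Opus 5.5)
The statement to prove is Lemma~\ref{lem.lengthadd}: if $w=xz$ with $\ell(w)=\ell(x)+\ell(z)$, then $\inv(w)=\inv(z)\sqcup z^{-1}\,\inv(x)$. Here $z^{-1}\inv(x)$ means the set of pairs obtained by applying $z^{-1}$ to both entries of a pair in $\inv(x)$, reordered so that the smaller entry comes first.

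The plan is to classify pairs $i<j$ according to whether $z$ and $x$ reverse them. Fix $i<j$ and put $a=z(i)$, $b=z(j)$. Since $w(i)=x(a)$ and $w(j)=x(b)$, there are two cases.

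If $a<b$, so that $(i,j)\notin\inv(z)$, then $(i,j)\in\inv(w)$ exactly when $x(a)>x(b)$, that is, when $(a,b)\in\inv(x)$.

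If $a>b$, so that $(i,j)\in\inv(z)$, then $(i,j)\in\inv(w)$ exactly when $x(b)>x(a)$, that is, when $(b,a)\notin\inv(x)$.

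This case split gives the following general identity, valid with no length hypothesis:
\[
\inv(w)=\bigl(\inv(z)\setminus z^{-1}\inv(x)\bigr)\sqcup\bigl(z^{-1}\inv(x)\setminus \inv(z)\bigr).
\]
The map $(a,b)\mapsto z^{-1}(\{a,b\})$ is a bijection from unordered pairs to unordered pairs. Hence $\#\,z^{-1}\inv(x)=\ell(x)$, and the identity yields
\[
\ell(w)=\ell(z)+\ell(x)-2\,\#\bigl(\inv(z)\cap z^{-1}\inv(x)\bigr).
\]

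Now apply the hypothesis $\ell(w)=\ell(x)+\ell(z)$. It forces $\inv(z)\cap z^{-1}\inv(x)=\emptyset$. Substituting this into the identity gives $\inv(w)=\inv(z)\sqcup z^{-1}\inv(x)$, as claimed.

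The only delicate point is the convention for $z^{-1}\inv(x)$, namely reordering each pair so that it is increasing. With that settled, the argument is a direct count and presents no real obstacle.
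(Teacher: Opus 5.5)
Your proof is correct. The paper does not prove this lemma; it quotes it from Bj\"orner--Brenti~\cite{BB05}. Your argument is exactly the standard proof behind that citation, written out concretely for $S_n$: you establish the unconditional identity $\inv(xz)=\inv(z)\,\triangle\,z^{-1}\inv(x)$ and then use the count $\ell(w)=\ell(z)+\ell(x)-2\,\#(\inv(z)\cap z^{-1}\inv(x))$.

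Two small remarks. First, in the case $a>b$ you wrote ``exactly when $x(b)>x(a)$''. Since $w(i)=x(a)$ and $w(j)=x(b)$, the condition should be $x(a)>x(b)$. This matches your conclusion $(b,a)\notin\inv(x)$, which is correct as written.

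Second, the reordering convention you flag as delicate never actually comes into play. Suppose some $(a,b)\in\inv(x)$ had $z^{-1}(a)>z^{-1}(b)$. Then $(z^{-1}(b),z^{-1}(a))$ would lie in $\inv(z)\cap z^{-1}\inv(x)$, and you showed this set is empty. So $z^{-1}$ preserves the order of every inversion of $x$. Your argument therefore gives the lemma in the literal ordered-pair form the paper relies on in Lemmas~\ref{lem.cosets.tab} and~\ref{lemma.refinement}.
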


Applying Lemma~\ref{lem.lengthadd} to the coset decomposition $w=yv$ from Lemma~\ref{lem.cosets} we have $\inv(w) = \inv(v)\sqcup v^{-1}\inv(y)$. When $\gm = \lambda$ is a partition, we can use the tableau $\tabExtra{w,\lambda}$ to obtain a description of the corresponding decomposition of the inversions of $w$.  

\begin{lemma}\label{lem.cosets.tab} For any $w\in S_n$ and partition $\lambda\vdash n$, the set of inversions $\inv(w)$ of $w$ are precisely the pairs $(i,j) \in [n]^2$ with $i<j$ in $\tabExtra{w,\lambda}$ such that $j$ appears below $i$ in the same column or $j$ appears in any column to the left of the column containing $i$ in $\tabExtra{w,\lambda}$. Furthermore, if $w=yv$ with $y\in W_{\lambda^t}$ and $v\in {}^{\lambda^t} W$ then:
\begin{enumerate}
\item \label{eq: length y from tableau} $(i,j)\in v^{-1}\,\inv(y)$ if and only if $j$ appears below $i$ in the same column of $\tabExtra{w,\lambda}$, and 
\item \label{eq: length v from tableau} $(i,j)\in \inv(v)$ if and only if $j$ appears in any column strictly to the left of the column containing $i$ in $\tabExtra{w,\lambda}$. 
\end{enumerate}
\end{lemma}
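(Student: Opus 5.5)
The plan is to translate everything into statements about positions in the base filling. By definition, the box with base label $p$ in $\tabExtra{w,\lambda}$ contains $w^{-1}(p)$. Equivalently, the number $i$ sits in the box whose base label is $w(i)$.

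The base filling numbers boxes column by column, from left to right, and bottom to top within each column. Hence for two boxes, the base label of the first is smaller than that of the second exactly when the first lies in a column strictly to the left, or in the same column and strictly below.

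A pair $(i,j)$ with $i<j$ is an inversion of $w$ precisely when $w(j)<w(i)$. By the previous paragraph, this means the box containing $j$ is either in a column strictly left of the box containing $i$, or in the same column and below it. This proves the first assertion.

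For the decomposition, I use that the sets $\Lint{s}(\lambda^t)$ are exactly the base labels of the $s$-th column. So $W_{\lambda^t}$ is the subgroup of permutations of base labels that preserve each column.

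Write $w=yv$ as in Lemma~\ref{lem.cosets}. Since $w(i)=y(v(i))$ and $y$ preserves columns, $v(i)$ and $w(i)$ label boxes in the same column. Thus the column containing $i$ is the same in $\tabExtra{w,\lambda}$ and in $\tabExtra{v,\lambda}$.

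The condition $v\in{}^{\lambda^t}W$ says that the elements of each $\Lint{s}(\lambda^t)$ occur in increasing order in the one-line notation of $v$. Equivalently, $v^{-1}$ is increasing on each column's set of labels. So in $\tabExtra{v,\lambda}$ each column is filled increasingly as the base label increases, that is, from bottom to top.

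Applying the first assertion to $v$, a pair $(i,j)$ with $i<j$ in the same column is never an inversion of $v$. Indeed, $j>i$ forces $j$ to sit above $i$ in $\tabExtra{v,\lambda}$. Hence $\inv(v)$ consists exactly of the pairs with $j$ in a column strictly left of $i$. Since columns agree between $\tabExtra{v,\lambda}$ and $\tabExtra{w,\lambda}$, this is statement~(\ref{eq: length v from tableau}).

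Finally, Lemma~\ref{lem.cosets} gives $\ell(w)=\ell(y)+\ell(v)$, so Lemma~\ref{lem.lengthadd} gives the disjoint union $\inv(w)=\inv(v)\sqcup v^{-1}\inv(y)$. Therefore $v^{-1}\inv(y)=\inv(w)\setminus\inv(v)$. By the first assertion and~(\ref{eq: length v from tableau}), this set consists of the pairs with $j$ below $i$ in the same column of $\tabExtra{w,\lambda}$, which is~(\ref{eq: length y from tableau}).

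The only delicate point is the bookkeeping. One must correctly identify that $w(i)$ is the position of $i$ and that left multiplication by $y$ acts on positions within columns. One must also check that the ${}^{\lambda^t}W$ condition makes the columns of $\tabExtra{v,\lambda}$ increase upward. Everything else follows directly from the cited lemmas.
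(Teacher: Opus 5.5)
Your proof is correct. Your treatment of the first assertion is the same as the paper's: $i$ sits in the box with base label $w(i)$, and base labels order boxes left to right, then bottom to top. For the decomposition, however, you run the argument in the opposite direction.

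The paper proves (1) directly. It takes $(c,d)\in\inv(y)$ and uses $v\in{}^{\lambda^t}W$ to get $v^{-1}(c)<v^{-1}(d)$. It then tracks $(v^{-1}(c),v^{-1}(d))$ to a same-column pair of $\tabExtra{w,\lambda}$ with the larger entry below, and says the argument "may be reversed" for the converse. Only then does it obtain (2) as the complement via Lemma~\ref{lem.lengthadd}.

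You instead prove (2) directly. You observe that $\tabExtra{v,\lambda}$ has the same column contents as $\tabExtra{w,\lambda}$, because $y$ preserves each $\Lint{s}(\lambda^t)$, and that its columns increase from bottom to top. You then apply the already-established first assertion to $v$ itself, and (1) falls out as the complement.

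Your route gets both inclusions of (2) at once from a characterization you have already proved, so it avoids the reversal step the paper leaves to the reader. It also makes transparent that $v$ depends only on the set of entries in each column of $\tabExtra{w,\lambda}$. This is exactly the consequence the paper later extracts from part (2) in Lemma~\ref{lem.inc-cols.ss} to conclude $v=\tilde v$.

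The paper's route, in exchange, exhibits the explicit correspondence $(c,d)\mapsto(v^{-1}(c),v^{-1}(d))$ from $\inv(y)$ onto the same-column inversions. It shows concretely how each inversion of $y$ appears in the tableau, rather than identifying that set only as a complement.
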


\begin{proof}
Recall that, by definition, we construct $\tabExtra{w,\lambda}$ by filling the columns of the Young diagram of shape $\lambda$ with $w^{-1}$, from left to right, and bottom to top. Given a pair $(i,j)$ with $i<j$ such that $j$ appears below $i$ in the same column or $j$ appears in any column to the left of the column containing $i$ in $\tabExtra{w,\lambda}$, it follows immediately that $i = w^{-1}(b)$ and $j = w^{-1}(a)$ with $a<b$, where $a,b$ are the labels of the corresponding boxes in the base filling of shape $\lambda$. Thus, $(i,j)\in \inv(w)$ and every inversion of $w$ is obtained from such a pair in the tableau $\tabExtra{w,\lambda}$.

We now prove~\eqref{eq: length y from tableau}. Suppose that $c<d$ and $y(c)>y(d)$, that is, suppose $(c,d)\in \inv(y)$. Let $a:=y(c)$ and $b:= y(d)$.  Note that since $y \in W_{\lambda^t}$, we must have $a,b, c, d\in \Lint{s}(\lambda^t)$ for the same $s\in[\ell]$.  Since $w=yv$ where $v$ is the shortest coset representative, the assumption that $c,d\in \Lint{s}(\lambda^t)$ with $c<d$ ensures that $v^{-1}(c)<v^{-1}(d)$ and thus $w^{-1}(a)<w^{-1}(b)$. Since $\tabExtra{w,\lambda}$ is filled by $w^{-1}$ and since $a,b\in \Lint{s}(\lambda^t)$, we know that $w^{-1}(a)$ and $w^{-1}(b)$ both lie in column $s$ of $\tabExtra{w,\lambda}$. Moreover, since $a>b$ and we fill $\tabExtra{w,\lambda}$ from bottom to top along columns, we know $w^{-1}(b)=v^{-1}(d)$ lies below $w^{-1}(a)=v^{-1}(c)$ in this column. Thus, $(v^{-1}(c), v^{-1}(d))$ is an element of $\inv(w)$. 
This argument may be reversed and it follows that all pairs in the set $v^{-1}\inv(y)$ are obtained in this way, as was to be shown. 

Statement~\eqref{eq: length v from tableau} now follows immediately. Indeed,  $\inv(w)=\inv(v)\sqcup v^{-1}\inv(y)$ and by the previous paragraphs, the pairs $(i, j)$ from $\inv(w)$ in the set $v^{-1}\inv(y)$ are precisely those in the same column.  We conclude the pairs $(i,j)$ in $\inv(w)$ from different columns must be the elements of $\inv(v)$, as desired.
\end{proof}

\begin{example} Consider $w=[5\ 8\ 4\ 7\ 3\ 6\ 2\ 1]$ and $\lambda = (4,3,1)$. The tableau $\tabExtra{w,\lambda}$ appears in ~\eqref{example.TwLambda} above. The coset decomposition of $w$ relative to $\lambda^t = (3,2,2,1)$ is $w=yv$ where $y = [3 \ 2 \ 1 \ 5 \ 4 \ 7 \ 6 \ 8]$ and $v = [ 4 \ 8 \ 5 \ 6 \ 1 \ 7 \ 2 \ 3]$. The pair $(1,7)$ satisfies the condition that $7$ appears in a column to the left of the column containing $1$ in $\tabExtra{w,\lambda}$. Since these numbers come from distinct columns, $(1,7)\in \inv(v)$, which is easily verified by looking at the one-line notation for $v$. On the other hand, the pair $(4,6)$ satisfies the condition that $6$ appears in the same column as $4$ and below, and we see that $(v(4), v(6)) = (6,7) \in \inv(y)$. 
\end{example}

We can also use the tableau $\tabExtra{w, \lambda}$ to compute the Hessenberg inversions of the corresponding shortest coset representative. Lemma~\ref{lem.cosets.tab} immediately implies the following, from the definitions of $\ell(y)$ and $\ell_\hess(v)$. 

\begin{lemma}\label{lemma.tabSS cardinality}
Let $w \in S_n$ and $\lambda\vdash n$. Let $w = yv$ for $y \in W_{\lambda^t}$ and $v \in {}^{\lambda^t} W$. Then 
\begin{enumerate}
\item $\ell(y)$ is the number of pairs $(i,j) \in [n]^2$ with $i<j$ such that $j$ appears below $i$ in the same column of $\tabExtra{w,\lambda}$, and 
\item $\ell_\hess(v)$ is the number of pairs $(i,j) \in [n]^2$ with $i<j\leq \hess(i)$ such that $j$ appears in any column strictly to the left of the column containing $i$ in $\tabExtra{w,\lambda}$.
\end{enumerate}
\end{lemma}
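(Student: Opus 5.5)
The plan is to read both statements off Lemma~\ref{lem.cosets.tab} by restricting to the relevant subsets of inversions. Fix $w\in S_n$ and $\lambda\vdash n$, and write $w=yv$ with $y\in W_{\lambda^t}$ and $v\in {}^{\lambda^t}W$ as in Lemma~\ref{lem.cosets}.

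\textbf{Part (1).} By Lemma~\ref{lem.lengthadd}, $v^{-1}\inv(y)$ is obtained from $\inv(y)$ by applying the bijection $(c,d)\mapsto (v^{-1}(c),v^{-1}(d))$. Hence $\ell(y)=\#\inv(y)=\#\,v^{-1}\inv(y)$. By Lemma~\ref{lem.cosets.tab}\eqref{eq: length y from tableau}, the set $v^{-1}\inv(y)$ is exactly the set of pairs $(i,j)$ with $i<j$ such that $j$ lies below $i$ in the same column of $\tabExtra{w,\lambda}$. Counting these pairs gives the claim.

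One point needs care. The proof of Lemma~\ref{lem.cosets.tab} shows that $v^{-1}$ carries each pair $(c,d)\in\inv(y)$ to a pair $(v^{-1}(c),v^{-1}(d))$ whose first entry is smaller. So the image pairs are still written in increasing order, and the count involves no reordering ambiguity.

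\textbf{Part (2).} By definition, $\hinv{\hess,v}$ is the set of $(i,j)\in\inv(v)$ with $j\le \hess(i)$. By Lemma~\ref{lem.cosets.tab}\eqref{eq: length v from tableau}, $(i,j)\in\inv(v)$ exactly when $i<j$ and $j$ lies in a column strictly to the left of the column of $i$ in $\tabExtra{w,\lambda}$. Intersecting this description with the condition $j\le\hess(i)$ gives precisely the pairs listed in the statement, so $\ell_\hess(v)=\#\hinv{\hess,v}$ equals that count.

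There is no real obstacle here. The only point to verify is the one noted in Part (1): the relabelling by $v^{-1}$ is a bijection that preserves the ordering $i<j$, and this is already contained in the proof of Lemma~\ref{lem.cosets.tab}.
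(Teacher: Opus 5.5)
Your proposal is correct and follows the paper's approach. The paper simply notes that this lemma is immediate from Lemma~\ref{lem.cosets.tab} together with the definitions of $\ell(y)$ and $\ell_\hess(v)$, and you spell that out, adding the harmless check that relabeling by $v^{-1}$ is a bijection on pairs that preserves $i<j$.
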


\begin{example}\label{ex:invTab} Suppose $w=[5\, 2\, 3\, 8\, 6 \, 7\, 1\, 4]$ and $\lambda=(4,3,1)$. We have $\lambda^t = (3,2,2,1)$ and we can decompose $w$ as $w=yv$, where we see $y=[2\, 3\,1\,5\,4\,6\,7\,8]\in W_{\lambda^t}$ and $v=[4\,1\,2\,8\,6\,7\,3\,5]\in {^{\lambda^t} W}$. The corresponding tableau is displayed below.
\[
\tabExtra{w,\lambda}=\ytableausetup{centertableaux} \begin{ytableau}
3 & 1 & 6 & 4\\
2 & 8 & 5 \\ 
7 
\end{ytableau}
\]
Then using Lemma~\ref{lem.cosets.tab}, we compute
\begin{align*}
    \ell(y)&=3= \# \left(v^{-1}\Inv(y)\right)=\#\{(2,7),(3,7),(1,8)\}\\
    \ell(v)&=11=\#\{(1,2),(1,3),(1,7),(4,5),(4,6),(4,7),(4,8),(5,7),(5,8),(6,7),(6,8)\}
\end{align*}
Now suppose $\hess=(2,3,5,5,7,7,8,8)$; Lemma~\ref{lemma.tabSS cardinality} implies
\[
\ell_\hess(v) = 4 = \#\{(1,2),(4,5),(5,7),(6,7)\}.
\]
\end{example}


\subsection{Type $A$ Hessenberg varieties and sheets}
We now define some of the geometric objects which are studied in this paper. We focus exclusively on the case of Lie type $A$, i.e., the case of the algebraic group ${ GL}_n(\mathbb{C})$ with Lie algebra $\mathfrak{gl}_n(\C)$. 

Recall that the flag variety $\Fl(\C^n)$ is the variety of nested sequences of subspaces 
$$
\Fl(\C^n)=\{V_\bullet = (0  \subset V_1 \subset V_2 \subset \cdots \subset V_n=\C^n) :  \dim_\C(V_i) = i  \, \textup{ for all } \, i \in [n] \}. 
$$
Now let $\mx \in \mathfrak{gl}_n(\C)$ be an $n \times n$ matrix with complex entries and let $\hess: [n] \to [n]$ be a Hessenberg function. 
Then the \emph{Hessenberg variety associated to $\mx$ and $\hess$} is defined to be 
$$
\Hess(\mx,\hess) := \{V_\bullet \in \Fl(\C^n) : \mx V_i \subseteq V_{\hess(i)} \, \textup{ for all } \, i \in [n]\}.
$$

As is clear from the definition, Hessenberg varieties are parametrized in part by   $\mathfrak{gl}_n(\C)$ through the choice of $\mx \in \mathfrak{gl}_n(\C)$. The following is well-known and straightforward to prove.

\begin{lemma}\label{lemma.conjugation} If $\mx$ and $\mathsf{Y}$ are conjugate matrices, then there is an isomorphism of the corresponding Hessenberg varieties $\Hess(\mx,\hess)\simeq \Hess(\mathsf{Y},\hess)$.
\end{lemma}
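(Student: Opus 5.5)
Write $\mathsf{Y} = g\mx g^{-1}$ with $g \in GL_n(\C)$. The plan is to show that left multiplication by $g$ on the flag variety restricts to the desired isomorphism. Recall that $GL_n(\C)$ acts on $\Fl(\C^n)$ by $g\cdot V_\bullet := (0\subset gV_1\subset gV_2\subset\cdots\subset gV_n=\C^n)$. This is an algebraic automorphism of $\Fl(\C^n)$, since $g$ is invertible and preserves dimensions and inclusions. Its inverse is the action of $g^{-1}$.

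The key step is to check that this automorphism carries $\Hess(\mx,\hess)$ into $\Hess(\mathsf{Y},\hess)$. Suppose $V_\bullet\in\Hess(\mx,\hess)$, so that $\mx V_i\subseteq V_{\hess(i)}$ for all $i$. Then for each $i$,
\[
\mathsf{Y}(gV_i) = g\mx g^{-1}gV_i = g(\mx V_i)\subseteq gV_{\hess(i)},
\]
so $g\cdot V_\bullet\in\Hess(\mathsf{Y},\hess)$. The same computation, applied with $g^{-1}$ and the relation $\mx = g^{-1}\mathsf{Y}g$, shows that $g^{-1}$ maps $\Hess(\mathsf{Y},\hess)$ into $\Hess(\mx,\hess)$.

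It follows that the automorphism $V_\bullet\mapsto g\cdot V_\bullet$ of $\Fl(\C^n)$ restricts to mutually inverse morphisms between the two closed subvarieties. Hence $\Hess(\mx,\hess)\simeq\Hess(\mathsf{Y},\hess)$. The closedness of these subvarieties comes from their defining incidence conditions, which are closed conditions.

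I do not expect any real obstacle here. The only point needing care is that the morphism is algebraic, and this holds because the action map $GL_n(\C)\times\Fl(\C^n)\to\Fl(\C^n)$ is a morphism of varieties; restricting it to the closed subvarieties gives morphisms.
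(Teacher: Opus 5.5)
Your proof is correct: writing $\mathsf{Y}=g\mx g^{-1}$, the automorphism $V_\bullet\mapsto gV_\bullet$ of $\Fl(\C^n)$ restricts to mutually inverse morphisms between $\Hess(\mx,\hess)$ and $\Hess(\mathsf{Y},\hess)$, induced by $g$ and $g^{-1}$. The paper gives no proof of this lemma, calling it well known and straightforward, and your argument is the standard one that remark relies on.
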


When $\mx$ and $\mx'$ are from different conjugacy classes, the geometry of the corresponding Hessenberg varieties may differ significantly. Part of the motivation of this manuscript is to study how different Hessenberg varieties are related, as we pick matrices from different conjugacy classes. To study this question, we next define certain subsets of $\mathfrak{gl}_n(\C)$ parametrized by partitions $\lambda$. 

Let $\mx\in \mathfrak{gl}_n(\C)$ with distinct eigenvalues $\{c_1,c_2, \ldots, c_m\}$. A \emph{generalized Jordan block} of $\mx$ consists of all Jordan blocks of $\mx$ associated to a single eigenvalue $c_i$ of $\mx$. Each generalized Jordan block determines a partition, which we denote $\lambda^{(i)}$, recording the sizes of the Jordan blocks for $c_i$ in decreasing order. Thus, if $\lambda^{(i)} = (\lambda_1^{(i)}, \lambda_2^{(i)}, \ldots, \lambda_k^{(i)})$ then the algebraic multiplicity of $c_i$ is $|\lambda^{(i)}|:=\lambda_1^{(i)}+\lambda_2^{(i)}+\cdots \lambda_k^{(i)}$.   
Now let $\len(\lambda^{(i)})$ be the length of $\lambda^{(i)}$ and set $\ell := \max_{1 \leq i \leq m} \{\len(\lambda^{(i)})\}$. We define a partition $\lambda_\mx= (\lambda_{\mx,1}, \lambda_{\mx,2}, \ldots, \lambda_{\mx,\ell})\vdash n$
by setting $\lambda_{\mx,j} := \sum_{i=1}^m \lambda_j^{(i)}$ for each $j\in [\ell]$. 

\begin{definition}\label{definition.sheet}
Following the notation just established, the \emph{sheet in $\mathfrak{gl}_n(\C)$ associated to the partition $\lambda\vdash n$} is then defined as 
\[
\fg_\lambda := \{\mx \in \mathfrak{gl}_n(\C) : \lambda_\mx=\lambda\}.
\]
\end{definition}

\begin{example}\label{ex.sheet} Consider 
\[
\mx= \begin{bmatrix} 0 & 1 & 0 & 0 \\ 0 & 0 & 0 & 0\\ 0 & 0 & 1 & 0 \\ 0 & 0 & 0 & 1 \end{bmatrix} .
\]
We have two eigenvalues, $c_1=0$ and $c_2=1$ with $\lambda^{(1)} = (2)$ and $\lambda^{(2)}=(1,1)$. Thus $\ell=2$, $\lambda_\mx = (3,1)$, and $\mx\in \fg_{(3,1)}$. 
\end{example}

The following is immediate from the definition of $\fg_\lambda$ above.

\begin{lemma} \label{lemma: orbits in g lambda}
Let $\lambda$ be a partition of $n$. 
\begin{enumerate}
\item The sheet $\fg_\lambda$ contains a unique nilpotent orbit, namely the orbit of all nilpotent matrices of Jordan type $\lambda$. 
\item A semisimple matrix $\semi$ is in $\fg_\lambda$ if and only if it has $\lambda_1$ many distinct eigenvalues $c_1,\cdots,c_{\lambda_1}$, where the algebraic multiplicity of $c_i$ is equal to $\lambda^t_i$ for each $i$. 
\end{enumerate}
\end{lemma}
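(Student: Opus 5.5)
The statement is immediate from the definitions, so I will verify the two parts directly from how $\lambda_\mx$ is built out of the generalized Jordan blocks. No earlier results are needed beyond Lemma~\ref{lemma.conjugation}'s implicit use of Jordan normal form, which guarantees that the partitions $\lambda^{(i)}$, and hence $\lambda_\mx$, are well defined.

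For part (1), take a nilpotent $\mx$. It has the single eigenvalue $c_1=0$, so $m=1$ and $\lambda_\mx=\lambda^{(1)}$, which is the Jordan type of $\mx$. Thus a nilpotent matrix lies in $\fg_\lambda$ if and only if its Jordan type is $\lambda$. Since nilpotent orbits in $\mathfrak{gl}_n(\C)$ are classified by Jordan type, the nilpotent elements of $\fg_\lambda$ form exactly one orbit, namely the orbit of matrices of Jordan type $\lambda$, and this orbit is nonempty.

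For part (2), take a semisimple $\semi$ with distinct eigenvalues $c_1,\dots,c_m$ of multiplicities $a_1,\dots,a_m$. Every Jordan block of $\semi$ has size $1$, so $\lambda^{(i)}=(1^{a_i})$. By definition, $\lambda_{\semi,j}=\#\{i : a_i\ge j\}$. This is precisely the transpose of the partition obtained by sorting $(a_1,\dots,a_m)$ in decreasing order.

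Consequently $\semi\in\fg_\lambda$ if and only if the sorted multiplicity vector equals $\lambda^t$. Equivalently, $m=\len(\lambda^t)=\lambda_1$, and after relabeling the eigenvalues the multiplicity of $c_i$ is $\lambda^t_i$. The converse direction runs the same computation backwards.

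The only step requiring care is this transpose identity, i.e.\ checking that $j\mapsto\#\{i: a_i\ge j\}$ is the conjugate partition of the sorted multiplicities, and that is the standard row/column count of a Young diagram. So I expect no real obstacle here.
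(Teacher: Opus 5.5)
Your verification is correct and is exactly the direct unwinding of Definition~\ref{definition.sheet} that the paper means when it calls the lemma immediate; the same count $\lambda_{\semi,j}=\#\{i : a_i\ge j\}$ also appears in the proof of Lemma~\ref{lemma.sheet.same}. One small quibble: the $\lambda^{(i)}$ are well defined because Jordan normal form is unique, not because of Lemma~\ref{lemma.conjugation}, which is about isomorphisms of Hessenberg varieties.
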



\subsection{Affine pavings of type $A$ Hessenberg varieties}\label{subsec: affine pavings}

We now turn our attention to affine pavings of Hessenberg varieties $\Hess(\mx,\hess)$. These pavings will be the main tool by which we obtain our main results. Known constructions of affine pavings of Hessenberg varieties utilize the well-known affine paving of $\Fl(\C^n)$ by Schubert cells. 
Recall that the Schubert cell is defined as $C_w :=B\dot{w}B/B$, where $B\subset { GL}_n(\mathbb{C})$ is the Borel subgroup of upper triangular matrices and $\dot{w}$ is the permutation matrix for $w\in S_n$ such that column $i$ has unique nonzero entry equal to $1$ in row $w(i)$.  The intersection $C_w\cap \Hess(\mx, \hess)$ is called a \emph{Hessenberg--Schubert cell}.

In Section~\ref{sec.semisimple and nilpotent Hessenberg varieties} below, we focus on two specific types of matrices $\mx$, namely, the nilpotent case and the semisimple case. This is due to the fact that more is known about the combinatorics governing the Betti numbers of the Hessenberg varieties, as well as the affine pavings, associated to these cases.  When $\mx$ is nilpotent, the corresponding Hessenberg variety $\Hess(\mx, h)$ is called a \emph{nilpotent Hessenberg variety} and, similarly, if $\mx$ is semisimple then $\Hess(\mx, h)$ is a \emph{semisimple Hessenberg variety}. 

We begin with the nilpotent case. Let $\lambda\vdash n$. Following Tymoczko~\cite{Tymoczko2006}, we fix a specific nilpotent matrix $\nilp_\lambda$ of Jordan type $\lambda$ as follows. 

\begin{definition}\label{def.base.nilp} For each partition $\lambda \vdash n$ define $\nilp_\lambda := \sum E_{\ell r}$, where the sum is over all pairs $(\ell,r)$ such that  $r$ labels the box directly to the right of $\ell$ in the base filling of $\lambda$. 
\end{definition}

Although $\nilp_\lambda$ is not in Jordan canonical form, it is not hard to see that $\nilp_\lambda$ is nilpotent and $\nilp_\lambda \in \fg_\lambda$. For example, when $\lambda = (4,3,1)$ with base filling as in~\eqref{eqn.base} we have
\[
\nilp_\lambda =  \begin{bmatrix} 0 & 0 & 0  & 0 & 0 & 0 & 0 & 0\\ 0 & 0 & 0 & 1 & 0 & 0 & 0 & 0\\ 0 & 0 & 0 & 0 & 1 & 0 & 0 & 0\\ 0 & 0 & 0 & 0 & 0 & 1 & 0 & 0\\ 0 & 0 & 0 & 0 & 0 & 0 & 1 & 0\\   0 & 0 & 0 & 0 & 0 & 0 & 0 & 0\\ 0 & 0 & 0 & 0 & 0 & 0 & 0 & 1\\ 0 & 0 & 0 & 0 & 0 & 0 & 0 & 0\\ \end{bmatrix}.
\]
The following is a result of Tymoczko, stated for the special case in which  the matrix $\mx={\sf \nilp_\lambda}$ is nilpotent. We use the notion of $h$-strict tableau, introduced in Definition~\ref{definition.h-strict}.

\begin{prop}[Tymoczko~{\cite[Theorem 7.1]{Tymoczko2006}}] \label{prop.nildim} Let $\lambda$ and $\nilp_\lambda$ be as above. Let $w \in S_n$ and $\hess: [n] \to [n]$ a Hessenberg function. Then the Hessenberg--Schubert cell $C_w\cap \Hess(\nilp_\lambda,\hess)$ is nonempty if and only if $\tabExtra{w,\lambda}$ is $\hess$-strict. If $\tabExtra{w,\lambda}$ is $\hess$-strict, then $C_w \cap \Hess(\nilp_\lambda,\hess)$ is isomorphic to an affine space, and its dimension $\dim(C_w\cap \Hess(\nilp_\lambda,\hess))$ is given by counting the number of pairs $(i,j)\in [n]^2$ where $i<j$ such that 
\begin{enumerate}
\item $j$ appears below $i$ in the same column or $j$ appears in any column to the left of the column containing $i$ in $\tabExtra{w,\lambda}$, and
\item if $k$ labels the box directly to the right of $i$ in $\tabExtra{w,\lambda}$, then $j\leq \hess(k)$.
\end{enumerate}
Moreover, the collection $\{C_w \cap \Hess(\nilp_\lambda, \hess): w \in S_n, \,\tabExtra{w,\lambda} \; \hess\text{-strict} \}$ is an affine paving of $\Hess(\nilp_\lambda, \hess)$.
\end{prop}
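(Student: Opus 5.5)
This result is quoted from Tymoczko~\cite[Theorem 7.1]{Tymoczko2006}. If I were to reprove it, I would follow her strategy: write each Hessenberg--Schubert cell in explicit coordinates, then show that the Hessenberg conditions cut out a graph of polynomial functions over a coordinate subspace.

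\textbf{Step 1: coordinates and the torus argument for nonemptiness.} Write $C_w = U_w\dot{w}B/B$, where $U_w\subseteq U$ (upper unitriangular) is the product of root subgroups $U_{\alpha}$ over the roots $\alpha$ that $w^{-1}$ sends negative. This gives affine coordinates $x_{(i,j)}$ on $C_w\simeq \C^{\ell(w)}$, indexed by $\inv(w)$, or equivalently, by Lemma~\ref{lem.cosets.tab}, by pairs $i<j$ in $\tabExtra{w,\lambda}$ with $j$ below $i$ in its column or in a column to the left. A point $u\dot wB$ lies in $\Hess(\nilp_\lambda,\hess)$ iff $\dot w^{-1}u^{-1}\nilp_\lambda u\dot w$ lies in the Hessenberg space $H=\{Y: Y_{ab}=0 \text{ whenever } a>\hess(b)\}$.
\begin{itemize}
\item The fixed point $\dot wB$ lies in $\Hess(\nilp_\lambda,\hess)$ iff $\dot w^{-1}\nilp_\lambda\dot w\in H$. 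For each $E_{\ell r}$ this says $w^{-1}(\ell)\le \hess(w^{-1}(r))$. Since $w^{-1}(\ell)$ and $w^{-1}(r)$ are the entries of $\tabExtra{w,\lambda}$ in horizontally adjacent boxes, this condition is exactly $\hess$-strictness.
\item For the converse (cell nonempty implies fixed point in the variety), choose a cocharacter $\gamma$ of the diagonal torus with $\mathrm{Ad}(\gamma(t))\nilp_\lambda = t\,\nilp_\lambda$. Such a $\gamma$ exists because $\nilp_\lambda$ is a sum of $E_{\ell r}$ along rows of the base filling, so we can weight each box by its column index. Then $\Hess(\nilp_\lambda,\hess)$ is $\gamma$-stable.
\item Perturb $\gamma$ by a generic cocharacter in the same direction so that the limit $t\to 0$ retracts $C_w$ onto $\dot wB$. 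Since $\Hess(\nilp_\lambda,\hess)$ is closed, any point of the cell then flows to $\dot wB$ inside the variety.
\end{itemize}

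\textbf{Step 2 (the main obstacle): triangularity of the equations.} The equations are the entries $(a,b)$ of $\dot w^{-1}u^{-1}\nilp_\lambda u\dot w$ with $a>\hess(b)$. Each such entry is a polynomial in the $x$'s, and I would order the inversions so that each equation has the form
\[ x_{(i,j)} + (\text{polynomial in earlier variables}) = 0. \]
I expect this to be the hardest step. Concretely, the product $u^{-1}\nilp_\lambda u$ is $\nilp_\lambda + [\nilp_\lambda, \log u]+\cdots$. The linear term pairs the coordinate $x_{(i,j)}$ with the entry obtained by moving $i$ one box to the right, to the box labelled $k$. The resulting equation is nontrivial exactly when $j>\hess(k)$.

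I would try the following first:
\begin{itemize}
\item Order the roots by height, refined by the column position in the tableau.
\item Check that the higher-order terms only involve variables that are strictly earlier in this order.
\item Check that each equation's leading variable is distinct, so that no two constraints pin the same coordinate.
\end{itemize}
The special shape of $\nilp_\lambda$ (the base filling increases up columns and left to right) is what should make this ordering work. If the direct ordering fails, I would fall back on Tymoczko's inductive approach, projecting to a smaller flag variety and using a fibre bundle structure.

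\textbf{Step 3: dimension count and paving.} Granting Step 2, each constrained coordinate is eliminated as a polynomial in the others, so $C_w\cap\Hess(\nilp_\lambda,\hess)$ is isomorphic to affine space. Its free coordinates are the inversions $(i,j)$ satisfying condition (2): either $i$ has no right neighbour, or its right neighbour $k$ satisfies $j\le \hess(k)$. This gives the stated count. Finally, the Schubert cells form an affine paving of $\Fl(\C^n)$: they can be ordered compatibly with Bruhat order so that each union of an initial segment is closed. Intersecting with the closed subvariety $\Hess(\nilp_\lambda,\hess)$ preserves this closure property, so the nonempty intersections form an affine paving.
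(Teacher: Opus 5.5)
The paper gives no proof of this proposition; it imports it from Tymoczko. Your opening citation is therefore exactly what the paper does. Read as a self-contained reproof, though, your sketch has a gap where the content lies. Step~2 is only a plan (``I would try \dots; if the direct ordering fails \dots''). It also overlooks that some Hessenberg conditions have no candidate leading variable; these must be shown to vanish identically on $C_w$, or the count in Step~3 fails.

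Both points can be settled directly, and your ordering idea works once made precise. For $u\in U_w$ one has $(u^{-1}\nilp_\lambda u)_{ab}=\sum(u^{-1})_{a\ell}\,u_{rb}$, summed over base labels with $r$ directly right of $\ell$. The property of the base filling that drives everything is that $r-\ell$ is the length of the column containing $r$. It is therefore weakly decreasing in $r$, so $r<r'$ forces $\ell<\ell'$.

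Fix a condition $(a,b)$, i.e.\ $w^{-1}(a)>\hess(w^{-1}(b))$, with $\tabExtra{w,\lambda}$ $\hess$-strict.
\begin{itemize}
\item If $b$ is in the first column, or $b$ has left neighbour $\ell_0<a$, no term survives: either no $r\le b$ has a left neighbour, or $\ell\le\ell_0<a$. The case $\ell_0=a$ is excluded by $\hess$-strictness.
\item Otherwise $a<\ell_0$, and $(a,\ell_0)$ is automatically an inversion position, since $w^{-1}(\ell_0)\le\hess(w^{-1}(b))<w^{-1}(a)$.
\end{itemize}
In the second case the entry equals $-u_{a\ell_0}+u_{r_ab}+(\text{products of entries of height}<\ell_0-a)$. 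Here $r_a$ is the right neighbour of $a$ (the middle term is present only if $a$ has one), and the height of $(c,d)$ is $d-c$. Your description of the linear part omits the term $u_{r_ab}$. Its height is $\le\ell_0-a$, with equality possible (e.g.\ for rectangular $\lambda$), so height alone does not give triangularity. Ties must be broken by taking the larger first index first. With that order each condition reads $u_{a\ell_0}=(\text{polynomial in earlier coordinates})$. The leading coordinates $(a,\ell_0)$ are distinct, and they correspond exactly to the inversions violating condition~(2).

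In Step~1, perturbing $\gamma$ by a \emph{generic} cocharacter breaks the argument. A generic one-parameter subgroup of the torus does not preserve $\Hess(\nilp_\lambda,\hess)$, so limits of points of the cell need not stay in the variety. You must perturb inside the subtorus that scales $\nilp_\lambda$, for instance by row weights, which centralize $\nilp_\lambda$. Concretely, give the box in row $\rho$ and column $c$ of the base filling the exponent $\rho-nc$. These exponents are strictly decreasing in the base label, so $t\to 0$ contracts $C_w$ to $\dot wB$, and $\mathrm{Ad}(\gamma(t))\nilp_\lambda=t^{n}\nilp_\lambda$.

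Alternatively, the computation above makes the torus unnecessary. If base labels $\ell,r$ are horizontally adjacent with $w^{-1}(\ell)>\hess(w^{-1}(r))$, then the $(\ell,r)$ entry of $u^{-1}\nilp_\lambda u$ is identically $1$ on $U_w$, because any other contributing pair would need $r'<r$ and hence $\ell'<\ell$. So the cell is empty exactly when $\tabExtra{w,\lambda}$ fails to be $\hess$-strict.
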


Note that item (1) in the proposition is equivalent to the statement that $(i,j)\in \Inv(w)$ by Lemma~\ref{lem.cosets.tab}.
Inspired by the above, we introduce the following terminology; note that the set of ``nil-inversions'' defined below differs from the set of $\hess$-inversions $\hinv{\hess,w}$ of $w$ introduced in Section~\ref{sec:comb} above.

\begin{definition}\label{definitiion.h.w.lambda.inversion} 
We call a pair $(i,j) \in [n]^2$ satisfying (1) and (2) from  Proposition~\ref{prop.nildim} a \emph{nil-inversion} of $\tabExtra{w,\lambda}$. 
\end{definition}

Before stating the analogous result in the semisimple setting, we need some notation. 

\begin{definition}\label{definition.semi.tau}
Let $\tau\vDash n$. We define $\semi_\tau$ to be a diagonal $n \times n$ matrix with $\len(\tau)$ distinct eigenvalues $c_1, \cdots, c_{\len(\tau)}$, listed in that order along the diagonal, with the algebraic multiplicity of $c_i$ equal to $\tau_i$. (Our arguments are independent of the values of the eigenvalues $c_i$, as long as they are distinct.) 
\end{definition} 

From Lemma~\ref{lemma: orbits in g lambda} it follows that $\semi_\tau$ belongs to $\mathfrak{g}_{(\mathrm{sort}(\tau))^t}$ where $\mathrm{sort}(\tau)$ is the partition obtained from $\tau$ by sorting the parts of $\tau$ in decreasing order.

The following result says the Hessenberg--Schubert cells give an affine paving of the semisimple Hessenberg variety $\Hess(\semi_{\tau},\hess)$. This is a restatement of~\cite[Corollary 7.2]{Tymoczko2006}, where in the notation of the following proposition, $\ell(y)$ counts the pairs from~\cite[Corollary 7.2(1)]{Tymoczko2006} and $\ell_\hess(v)$ counts the pairs from~\cite[Corollary 7.2(2)]{Tymoczko2006}.

\begin{prop}[Tymoczko~{\cite[Corollary 7.2]{Tymoczko2006}}]\label{prop.ssdim} Let $\tau\vDash n$ and $\semi_{\tau}$ be a diagonal matrix as defined in Definition~\ref{definition.semi.tau}. For each $w\in S_n$, write $w=yv$ with $y\in W_{\tau}$ and $v\in {^{\tau}}W$. Then the intersection $C_w \cap \Hess(\semi_\tau, \hess)$ is nonempty, and is isomorphic to an affine space of dimension
\[
\dim (C_w\cap \Hess (\semi_{\tau},\hess)) =  \ell(y)+\ell_\hess(v). 
\]
Moreover, the collection $\{C_w \cap \Hess(\semi_{\tau}, \hess): w \in S_n\}$ is an affine paving of $\Hess(\semi_{\tau}, \hess)$.
\end{prop}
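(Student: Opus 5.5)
The plan is to realize the Hessenberg--Schubert cells as Białynicki-Birula cells for a torus action, and to read off their dimensions from tangent weights at $T$-fixed points. Let $T\subset B$ be the diagonal torus. Since $\semi_\tau$ is diagonal, it commutes with $T$, so $T$ acts on $\Hess(\semi_\tau,\hess)$.

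\textbf{Fixed points.} Every $T$-fixed flag $\dot w B$ is a coordinate flag $V_i=\mathrm{span}(e_{w(1)},\dots,e_{w(i)})$. Each $e_k$ is an eigenvector of $\semi_\tau$, so $\semi_\tau V_i\subseteq V_i\subseteq V_{\hess(i)}$. Hence every coordinate flag lies in $\Hess(\semi_\tau,\hess)$. In particular $C_w\cap\Hess(\semi_\tau,\hess)$ contains $\dot wB$, which gives the nonemptiness claim for every $w\in S_n$.

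\textbf{Affine cells.} Choose a regular dominant cocharacter $\gamma:\C^*\to T$, so that the attracting set of $\dot wB$ in $\Fl(\C^n)$ is exactly $C_w$. Then the attracting set of $\dot wB$ in $\Hess(\semi_\tau,\hess)$ is $C_w\cap\Hess(\semi_\tau,\hess)$. If $\Hess(\semi_\tau,\hess)$ is smooth and projective, the Białynicki-Birula theorem makes these attracting sets affine spaces. Their dimension is the number of positive $\gamma$-weights in $T_{\dot wB}\Hess(\semi_\tau,\hess)$, and they form a paving (filterable by the closure order).

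\textbf{Tangent weights.} Identify $T_{\dot wB}\Fl(\C^n)$ with $\mathrm{Ad}(\dot w)(\fg/\fb)$, spanned by root vectors indexed by pairs $(i,j)$ with $i<j$. Linearizing the condition $g^{-1}\semi_\tau g\in H_\hess$, where $H_\hess$ is the span of $E_{ab}$ with $a\le\hess(b)$, a root direction $E$ is tangent exactly when $[E,\semi_\tau]$, conjugated by $\dot w^{-1}$, lies in $H_\hess$ modulo $\fb$. If the two coordinates lie in the same eigenspace block of $\tau$, the bracket vanishes and the direction is always tangent. Otherwise the bracket is a nonzero multiple of the same root vector, so the direction is tangent iff it satisfies the Hessenberg condition. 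The positive-weight directions are those indexed by $\inv(w)$.

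\textbf{Counting.} Decompose $\inv(w)=\inv(v)\sqcup v^{-1}\inv(y)$ using Lemmas~\ref{lem.cosets} and~\ref{lem.lengthadd}. The pairs in $v^{-1}\inv(y)$ are exactly the inversions whose values $w(i),w(j)$ lie in the same block $\Lint{s}(\tau)$; they contribute $\ell(y)$. The pairs in $\inv(v)$ are the cross-block inversions, and those satisfying $j\le\hess(i)$ contribute $\ell_\hess(v)$. The bookkeeping here is to check that the Hessenberg condition for cross-block directions, after translating through $\dot w$, is precisely $j\le\hess(i)$. I expect this to be routine but index-heavy.

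\textbf{Main obstacle.} The hard step is smoothness of $\Hess(\semi_\tau,\hess)$, without which Białynicki-Birula does not apply. I would try to show that the tangent space computed above has the same dimension at every point, namely $\dim\Hess(\semi_\tau,\hess)$, via the fibration $g\mapsto gB$ and transversality of $G\to\fg$, $g\mapsto g^{-1}\semi_\tau g$, to the subspace $H_\hess$. If transversality fails in general, I would fall back on a direct argument on $C_w$. Write points as $u\dot wB$ with $u\in U\cap \dot wU^-\dot w^{-1}$. Order the coordinates of $u$ so that the Hessenberg equations become triangular. Each equation indexed by a cross-block pair violating $j\le\hess(i)$ then solves for one coordinate polynomially in earlier ones, because the differences of distinct eigenvalues $c_a-c_b$ are nonzero. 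This exhibits $C_w\cap\Hess(\semi_\tau,\hess)$ directly as a graph over an affine space of dimension $\ell(y)+\ell_\hess(v)$.
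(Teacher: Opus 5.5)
\textbf{The main route has a genuine gap.} Bia\l{}ynicki-Birula needs $\Hess(\semi_\tau,\hess)$ to be smooth. Once $\semi_\tau$ has repeated eigenvalues it is not smooth in general, so the transversality you hope to prove is false. Take $n=3$, $\tau=(2,1)$, $\semi_\tau=\diag(c_1,c_1,c_2)$ and $\hess=(2,3,3)$. The only condition is $\semi_\tau V_1\subseteq V_2$. For $V_1=\langle x\rangle$ this says $(c_2-c_1)x_3e_3\in V_2$, i.e.\ $x_3=0$ or $e_3\in V_2$. Thus $\Hess(\semi_\tau,\hess)$ is the union of two $2$-dimensional irreducible components, $\{V_1\subseteq\langle e_1,e_2\rangle\}$ and $\{e_3\in V_2\}$, which meet along a $\mathbb{P}^1$. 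Your own tangent computation sees this. At $\dot wB$ with $w=[1\,3\,2]$ all three root directions are tangent (the pair $(1,3)$ because $w(1),w(3)$ lie in the same block), whereas at the identity only two are. On a singular variety the attracting sets of a $\C^*$-action need not be affine spaces, nor have dimension equal to the number of positive tangent weights. So neither the affineness nor the dimension formula follows from your argument. The paving assertion itself is not the issue: once each $C_w\cap\Hess(\semi_\tau,\hess)$ is known to be affine, it follows from the Bruhat decomposition, since closures of Schubert cells are unions of Schubert cells.

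\textbf{Your fallback is the correct proof,} and it is the kind of explicit coordinate argument behind the result the paper quotes from Tymoczko. As written, however, it asserts exactly what must be proved, and it ignores equations indexed by positions with equal eigenvalues. Concretely, for $u\in U_w:=U\cap\dot wU^-\dot w^{-1}$, the flag $u\dot wB$ lies in $\Hess(\semi_\tau,\hess)$ iff $(u^{-1}\semi_\tau u)_{pq}=0$ whenever $w^{-1}(p)>\hess(w^{-1}(q))$. Since $u^{-1}\semi_\tau u$ is upper triangular and $\hess(b)\geq b$, only positions with $p<q$ and $w^{-1}(p)>w^{-1}(q)$ occur. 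So each condition is indexed by a coordinate $u_{pq}$ of $U_w$, equivalently by $(i,j)=(w^{-1}(q),w^{-1}(p))\in\inv(w)$ with $j>\hess(i)$. With $\semi_\tau=\diag(s_1,\ldots,s_n)$ one has
\[
(u^{-1}\semi_\tau u)_{pq}=(s_p-s_q)\,u_{pq}+\sum_{p<k<q}(u^{-1})_{pk}(s_k-s_q)\,u_{kq},
\]
and the sum involves only coordinates $u_{ab}$ with $[a,b]\subsetneq[p,q]$.

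There are two cases.
\begin{enumerate}
\item If $p$ and $q$ lie in the same block $\Lint{r}(\tau)$, then so does every $k$ between them, because the blocks are intervals. The entry then vanishes identically and imposes nothing. This is where the shape of $\semi_\tau$ in Definition~\ref{definition.semi.tau} is essential: for $\diag(a,b,a)$, $w=[3\,2\,1]$ and $\hess=(2,3,3)$, the only condition is $(b-a)u_{12}u_{23}=0$, and the cell is not an affine space.
\item If $p$ and $q$ lie in different blocks, then $s_p\neq s_q$, and the equation solves for $u_{pq}$ in terms of coordinates on strictly smaller intervals.
\end{enumerate}
Processing the equations in increasing order of $q-p$ exhibits $C_w\cap\Hess(\semi_\tau,\hess)$ as the graph of a polynomial map on the remaining free coordinates. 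Their number is $\ell(y)+\ell_\hess(v)$ by your counting paragraph.
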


Since the matrices $\semi_\tau$ and $\semi_{\mathrm{sort}(\tau)}$ are conjugate, we have $\Hess(\semi_\tau , \hess) \simeq \Hess(\semi_{\mathrm{sort}(\tau)},\hess)$. As a result we often assume  $\tau$ is a partition when we study semisimple Hessenberg varieties. However, some of our later arguments are simpler in the slightly more general setting.

\begin{example}\label{ex:ssTab1}
Let $w=[5\, 2\, 3\, 8\,6\,7\,1\,4]$ and $\lambda=(4,3,1)$ as in Example~\ref{ex:invTab}. We choose $\tau = \lambda^t = (3,2,2,1)$ and $\hess=(2,3,5,5,7,7,8,8)$. We may decompose $w$ as $w=yv$ with $\ell(y)= 3$ and $\ell_\hess(v)=4$, as computed in Example~\ref{ex:invTab}. By Proposition~\ref{prop.ssdim}, we know 
\[
\dim \left( C_w\cap\Hess(\semi_{\lambda^t},\hess)\right) =\ell(y)+\ell_\hess(v)=3+4=7.
\]
\end{example}

\begin{rem} As stated explicitly in Proposition~\ref{prop.nildim}, there is an affine paving of the nilpotent Hessenberg variety $\Hess(\nilp_\lambda, \hess)$ with cells indexed by $\hess$-strict tableaux. On the other hand, using Proposition~\ref{prop.ssdim}, we obtain an affine paving of $\Hess(\semi_{\lambda^t}, \hess)$ with $n!$ cells, one for each permutation $w\in S_n$.
In order to facilitate the comparison of the dimensions of $\Hess(\nilp_\lambda, \hess)$ and $\Hess(\semi_{\lambda^t}, \hess)$, we will use the bijective correspondence between $w \in S_n$ and tableaux $T_{w,\lambda}$ to index the affine cells of $\Hess(\semi_{\lambda^t},\hess)$ by tableaux. Note that, in the semisimple case, this means we are considering \emph{all} tableaux of the form $\tabExtra{w,\lambda}$, not just those which are $\hess$-strict. 
\end{rem}


\section{Semisimple and nilpotent Hessenberg varieties over a fixed sheet}\label{sec.semisimple and nilpotent Hessenberg varieties}

The main result of this section shows that the dimensions of nilpotent and semisimple Hessenberg varieties over the same sheet are equal. More precisely, we have the following. 

\begin{thm} \label{thm.ss-nilp} Let $\hess:[n]\to [n]$ be a Hessenberg function and $\lambda\vdash n$. Then the semisimple Hessenberg variety $\Hess(\semi_{\lambda^t}, \hess)$ and the nilpotent Hessenberg variety $\Hess(\nilp_\lambda, \hess)$ have the same dimension, i.e., 
\[
\dim \left(\Hess(\nilp_\lambda,\hess)\right) = \dim \left(\Hess(\semi_{\lambda^t},\hess)\right).
\]
In particular, all semisimple and nilpotent Hessenberg varieties defined using the Hessenberg function $\hess$ over the sheet $\fg_\lambda$ have equal dimension.
\end{thm}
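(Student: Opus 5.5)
The plan is to reduce both sides to maximizing a count over tableaux $\tabExtra{w,\lambda}$. By Propositions~\ref{prop.nildim} and~\ref{prop.ssdim}, each variety has an affine paving, so its dimension is the largest dimension of a cell. Thus $\dim \Hess(\nilp_\lambda,\hess)$ is the maximum, over $\hess$-strict $\tabExtra{w,\lambda}$, of the number of nil-inversions. By Lemma~\ref{lemma.tabSS cardinality}, $\dim\Hess(\semi_{\lambda^t},\hess)$ is the maximum over \emph{all} $\tabExtra{w,\lambda}$ of
\[
\#\{(i,j): i<j,\ j \text{ below } i \text{ in a column}\} + \#\{(i,j): i<j\leq \hess(i),\ j \text{ strictly left of } i\}.
\]
I will prove the two inequalities $\leq$ and $\geq$ separately, each by transforming an optimal tableau on one side into a tableau on the other side whose count is at least as large.

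First I simplify the semisimple side. The first term depends only on the order within columns and is maximized, with value $\sum_c \binom{\lambda^t_c}{2}$, when every column increases downward. The second term depends only on which set of entries lies in each column. So the semisimple dimension equals $\sum_c\binom{\lambda^t_c}{2}$ plus the maximum, over set partitions of $[n]$ into columns of sizes $\lambda^t_1,\lambda^t_2,\ldots$, of the number of pairs $i<j\leq\hess(i)$ with $j$ in an earlier column than $i$.

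For the nilpotent side, the key observation is a shift along rows. If $i'$ is the left neighbour of $i$, then the nil-condition for a pair $(i',j)$ reads $j\leq\hess(i)$, exactly the semisimple condition attached to $i$. I will therefore sort each column and match semisimple pairs $(i,j)$, with $i$ in column $c\geq 2$, to nil-pairs $(i',j)$ with $i'$ in column $c-1$ in the same row. Entries in the last box of a row have no right neighbour, so they contribute all their inversions unconditionally; these should absorb the same-column contributions $\binom{\lambda^t_c}{2}$. The bookkeeping I aim for is to show the nil-inversion count of a suitably sorted tableau is
\[
\sum_c \binom{\lambda^t_c}{2} + \#\{(i,j): i<j\leq\hess(i),\ j \text{ left of } i\}
\]
plus a correction that vanishes at an optimum. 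Conversely, for an optimal $\hess$-strict tableau, I will show its nil-inversion count is at most the semisimple count of the same tableau after reordering its columns increasingly, using the same row-shift injection in reverse.

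The main obstacle is making the row shift compatible with both the inversion requirement ($i'<j$, with $j$ left of or below $i'$) and $\hess$-strictness, since after sorting columns the rows need not satisfy $i'\leq \hess(i)$. I would first try a greedy construction. Fill the columns from right to left, sort each column downward increasing, and attach to each entry of column $c-1$ the row of the smallest available entry of column $c$ it can precede. The monotonicity of $\hess$ should make this matching exist exactly when the Hall-type condition holds, and that is where optimality is used. If that fails, the fallback is a direct exchange argument: swapping two entries across adjacent columns changes both counts by the same amount, which reduces the comparison to a canonical extremal filling where both counts can be computed explicitly.
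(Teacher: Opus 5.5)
Your reduction to maximal cell dimensions is sound. So is your observation that sorting columns can only help the semisimple count while its cross-column term depends only on the column sets. The row-shift idea is also right; it is the same shift the paper uses in the maps $\phi,\psi$ of Lemma~\ref{lem.h-strict.set-inclusion}.

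The genuine gap is the inequality $\dim\Hess(\semi_{\lambda^t},\hess)\leq\dim\Hess(\nilp_\lambda,\hess)$. For it you need the semisimple maximum to be attained at a tableau that is both column increasing and $\hess$-strict. You propose that optimality forces the Hall-type condition that makes the sorted arrangement $\hess$-strict. That is false. Take $n=2$, $\lambda=(2)$, $\hess=(1,2)$. The tableau $\tabExtra{s_1,\lambda}$, whose single row reads $2,1$, indexes a semisimple cell of the maximal dimension $0$. Yet its column sets $\{2\},\{1\}$ admit no $\hess$-strict arrangement, since $2>\hess(1)$. Section~\ref{sec.open} notes that in general $\Hess(\semi_{\lambda^t},\hess)$ can have top-dimensional cells indexed outside $\maxset$.

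So you must change the column sets without lowering the semisimple count, and that is the heart of the paper's proof. The operation $\swap{T}$ exchanges $j_0$ and $i_1=\imax$, where $(i_0,j_0)$ and $(i_1,j_1)$ are the first and last violating consecutive pairs in the two columns containing $\imax$. This particular choice is what makes the injection $\hinv{\hess,v}\hookrightarrow\hinv{\hess,\tilde v}$ work in Cases 3 and 4 of Lemma~\ref{lemma.SScells-swap}: the pairs $(b,c)$ above the first violation and $(d,a)$ below the last one are non-violating. The process terminates because $\ell(w)$ strictly decreases (Lemma~\ref{prop:lenSwaps}).

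Your fallback, that an exchange across adjacent columns changes both counts by the same amount, is unsupported. The nil-count also has no geometric meaning there, since the nilpotent cell is empty for a tableau that is not $\hess$-strict. Nor is there a canonical filling whose nilpotent count is known in closed form.

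The inequality $\dim\Hess(\nilp_\lambda,\hess)\leq\dim\Hess(\semi_{\lambda^t},\hess)$ is closer, but as written it stalls on exactly the issue you name: $i'<j$ can fail when you match positive counts. The ``correction that vanishes at an optimum'' is also never identified. The fix is to compare codimensions in $C_w$, i.e.\ to count the inversions that \emph{fail} the Hessenberg condition. Suppose $k<j$, $j$ lies strictly left of $k$, $j>\hess(k)$, and $i$ is the left neighbour of $k$. Then $\hess$-strictness gives $i\leq\hess(k)<j$ for free, so $(k,j)\mapsto(i,j)$ lands among the failing nil-pairs once the columns increase. Together with the reverse map, this gives equality of the two cell dimensions on all of $\maxset$.

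You still need that an optimal $\hess$-strict tableau may be taken column increasing, which is Lemma~\ref{lem.inc-cols.nilp}. Alternatively, without sorting, the same map sends the pairs with $j$ above $i$ in one column to column inversions of $T$. That directly bounds the nil-count of $T$ by the semisimple count of $\col(T)$.
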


We need several preliminaries to prove Theorem~\ref{thm.ss-nilp},  so we begin with a rough outline of our argument.
The central idea is that, since the results of Section~\ref{subsec: affine pavings} give us an affine paving of both $\Hess(\nilp_\lambda,\hess)$ and $\Hess(\semi_{\lambda^t},\hess)$, the dimension of each of these varieties can be obtained by computing the \emph{maximum} dimension of the affine cells in their respective pavings. Moreover, the results of Section~\ref{subsec: affine pavings} show that the affine pavings in question are obtained by intersecting with Schubert cells $C_w$. With this in mind, our argument can be broken down into proving the following:

\begin{prop} \label{prop.steps} 
The following statements hold:
\begin{enumerate} 
\item There is a subset of $S_n$, namely 
\begin{equation}\label{eq: max achieving subset} 
\maxset := \{w \in S_n: T_{w,\lambda} \textup{ is  $\hess$-strict and is column increasing} \}, 
\end{equation}
such that for $w \in \maxset$, the two affine cells $C_w \cap \Hess(\nilp_\lambda,\hess)$ and $C_w \cap \Hess(\semi_{\lambda^t},\hess)$ are both non-empty, and their dimensions are equal. 
\item For any $w \not \in \maxset$ for which $C_w \cap \Hess(\nilp_\lambda,\hess) \neq \emptyset$, we can find some permutation $\tilde{w} \in \maxset$ such that 
$\dim(C_w \cap \Hess(\nilp_\lambda,\hess)) \leq \dim(C_{\tilde{w}} \cap \Hess(\nilp_\lambda, \hess))$. In particular, the maximum dimension of the affine cells $C_w \cap \Hess(\nilp_\lambda,\hess)$ is achieved by a permutation $w$ in the set $\maxset$.  Moreover, if $C_w \cap \Hess(\nilp_\lambda, \hess)$ is maximum-dimensional, then $w \in \maxset$. 
\item For any $w \not \in \maxset$,  we can find a finite sequence of permutations $w =w_0\mapsto w_1 \mapsto w_2 \mapsto \cdots \mapsto w_{\ell}$ such that $w_{\ell} \in \maxset$ and the corresponding sequence of values of
$\dim(C_{w_i} \cap \Hess(\semi_{\lambda^t},\hess))$
is weakly increasing. In particular, the maximum dimension of the affine cells $C_w \cap \Hess(\semi_{\lambda^t},\hess)$ is achieved for $w$ in the set $\maxset$. 
\end{enumerate}
\end{prop}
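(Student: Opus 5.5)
We prove the three parts separately. Throughout we count cell dimensions with the tableau $\tabExtra{w,\lambda}$, using Lemma~\ref{lemma.tabSS cardinality} for the semisimple cells (Proposition~\ref{prop.ssdim} with $\tau=\lambda^t$) and nil-inversions for the nilpotent cells (Proposition~\ref{prop.nildim}).

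\emph{Part (1).} Let $w\in\maxset$. Nonemptiness is immediate: the nilpotent cell is nonempty because $\tabExtra{w,\lambda}$ is $\hess$-strict (Proposition~\ref{prop.nildim}), and every semisimple cell is nonempty (Proposition~\ref{prop.ssdim}). Column increasing means every same-column pair counts toward $\ell(y)$, so $\ell(y)=\sum_c\binom{\lambda^t_c}{2}$.

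For the nilpotent count we organize nil-inversions by the row of the smaller entry $i$. Suppose $i$ sits in row $r$, column $c$, and $k$ is the entry directly to its right. The nil condition $j\le\hess(k)$, with $j>i$ in column $c$ below $i$ or in a column $<c$, should be matched with the semisimple condition for $k$. That condition is $j\le \hess(k)$ with $k<j$ and $j$ in a column $<c+1$. This suggests a row-by-row shifting correspondence: the nil-inversions with smaller entry $i$ correspond to the semisimple $\hess$-inversions with smaller entry $k$. Entries $i$ at the end of a row are handled separately: they have no $k$, so condition (2) is vacuous, and they absorb the first-column semisimple contributions.

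The discrepancies come from two sources. One is the range $i<j\le k$ versus $k<j$ (using $\hess$-strictness $i\le\hess(k)$). The other is the set of entries of column $c$ above versus below $i$ (using column increasing). The aim is to show these cancel exactly, giving a bijection between nil-inversions of $\tabExtra{w,\lambda}$ and the pairs counted by $\ell(y)+\ell_\hess(v)$. I expect this bookkeeping, especially the same-column terms, to be the main obstacle. If a direct bijection fails, I would instead prove equality of the two totals by induction on the number of columns, removing the last column.

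\emph{Part (3).} Start with any $w$. Sorting each column into increasing order does not change the column contents, so $v$ is unchanged (it is determined by which entries lie in which column, Lemma~\ref{lem.cosets.tab}). It also does not decrease $\ell(y)$, which becomes maximal. If the sorted tableau is not $\hess$-strict, take a violation: $i$ directly left of $j$ with $i>\hess(j)\ge j$. Swap $i$ and $j$ between their columns and re-sort.

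We need to check that $\ell_\hess(v)$ does not decrease under this swap. After the swap, the larger entry $i$ sits to the right. So the pair $(j,i)$ was not an $\hess$-inversion before, and every pair $(j,x)$ or $(x,i)$ that is lost should be compensated by a pair that is gained. Monotonicity of $\hess$ together with $i>\hess(j)$ should make this comparison work. Termination follows from a potential, for example the sum over entries of column index times value, which strictly increases with each swap.

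\emph{Part (2).} Let $\tabExtra{w,\lambda}$ be $\hess$-strict but not column increasing. Take a vertical descent $a$ above $b$ with $a>b$. Swap the entire row-tails starting at these boxes, rather than the single entries, so that horizontal adjacencies, and hence $\hess$-strictness, are preserved except at the junction. Repair the junction using monotonicity of $\hess$.

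Then compare nil-inversions before and after the swap. The pair $(b,a)$ becomes a new inversion, and the remaining pairs are matched, giving a strict increase in dimension. The strict increase yields both the existence of $\tilde w$ and the claim that every maximum-dimensional cell has $w\in\maxset$.
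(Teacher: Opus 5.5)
Your column-sorting step in (3) and your termination potential are fine. However, the two moves you use to reach $\maxset$ do not have the monotonicity you claim, and part (1) is not proved.

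\textbf{Part (3).} Exchanging a single violating pair and re-sorting can strictly lower $\ell_\hess(v)$. Take $\lambda=(2,2)$, $\hess=(1,3,3,4)$ and the column increasing tableau with rows $3\,1$ and $4\,2$. Its semisimple cell has dimension $\ell(y)+\ell_\hess(v)=2+1=3$, the only $\hess$-inversion being $(2,3)$. Both rows violate $\hess$-strictness. Swapping the lower pair gives rows $2\,1$, $3\,4$, and swapping the upper pair gives rows $1\,2$, $4\,3$; both have $\ell_\hess=0$, i.e.\ dimension $2$.

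The compensation you expect exists only in non-violating rows. A lost $(j,x)$ with $x$ above $i$ is repaid by $(p,j)$, $p$ the right neighbour of $x$, only if $x\le\hess(p)$. A lost $(x,i)$ with $x$ below $j$ is repaid by $(i,q)$, $q$ the left neighbour of $x$, only if $q\le\hess(x)$. This is why Definition~\ref{def.swap} works in the two columns where $\imax$ occurs, takes the first and last violating pairs $(i_0,j_0)$ and $(i_1,j_1)$ there, and exchanges $j_0$ with $i_1$. In the example this gives rows $1\,2$, $3\,4$, which is $\hess$-strict of dimension $3$.

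\textbf{Part (2).} Swapping whole row tails reverses the vertical order in every column of the tail. So ascents to the right of your chosen descent become descents, and their inversions are lost. Take $\lambda=(3,3)$ and $\hess=(6,6,6,6,6,6)$, where nil-inversions are just inversions. The tableau with rows $6\,1\,3$ and $5\,2\,4$ has its only column descent in column $1$. Your move yields rows $5\,2\,4$ and $6\,1\,3$, dropping the dimension from $10$ to $9$. Moreover, tails of rows of different lengths cannot be exchanged at all.

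The paper instead sorts rows $r,r+1$ entry by entry in every column, with $r$ maximal such that these rows contain a column inversion. Monotonicity of $\hess$ keeps every $2\times 2$ configuration $\hess$-strict. Each lost nil-inversion is traded for a gained one, and at least one new pair $(j,i)$ appears. One then inducts on the number of column inversions (Lemma~\ref{lem.inc-cols.nilp}).

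\textbf{Part (1).} You describe a hoped-for bijection between the pairs counted by the two dimensions, and the discrepancies you list are genuine obstacles on that side. The missing idea is to compare codimensions inside $C_w$, which contains both cells and has dimension $\ell(w)=\ell(y)+\ell(v)$.
\begin{itemize}
\item The nilpotent codimension counts inversions $(i,j)$ of $w$ for which $i$ has a right neighbour $k$ with $j>\hess(k)$.
\item The semisimple codimension counts pairs $k<j$ with $j$ strictly left of $k$ and $j>\hess(k)$.
\end{itemize}
Then $(i,j)\mapsto(k,j)$ is a bijection. In one direction, $j>\hess(k)\ge k$ gives $k<j$ for free. Conversely, with $i$ the left neighbour of $k$, $\hess$-strictness gives $i\le\hess(k)<j$, and column-increasingness forces $j$ below $i$ if they share a column (Lemma~\ref{lem.h-strict.set-inclusion}). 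On the complement side, exactly the inequalities that worried you become automatic.
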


Putting the above claims together, since statements (2) and (3) show that the maximum dimensions of the affine cells for both $\Hess(\nilp_\lambda,\hess)$ and $\Hess(\semi_{\lambda^t},\hess)$ are achieved on $\maxset$, and statement (1) shows that on $\maxset$ the associated dimensions of the affine cells are equal, we may conclude that these maximum dimensions are the same. This shows that $\dim\left(\Hess(\nilp_\lambda,\hess) \right)=\dim\left(\Hess(\semi_{\lambda^t},\hess)\right)$, which is the first assertion in Theorem~\ref{thm.ss-nilp}. 

In what follows, Lemmas~\ref{lem.h-strict.set-inclusion} and~\ref{lem.h-strict} achieve Proposition~\ref{prop.steps}(1). Proposition~\ref{prop.steps}(2) is achieved by Lemma~\ref{lem.inc-cols.nilp}.  We then prove Lemma~\ref{lem.inc-cols.ss}, Lemma~\ref{lemma.SScells-swap}, Lemma~\ref{lem:swapCol}, and Lemma~\ref{prop:lenSwaps} to obtain Proposition~\ref{prop.steps}(3). 

\medskip

Bearing the sketch from Proposition~\ref{prop.steps} in mind, we begin working in earnest toward a proof of statement (1).

Our first lemma computes and compares codimensions of Hessenberg--Schubert cells of nilpotent and semisimple Hessenberg varieties
combinatorially:

\begin{lemma}\label{lem.h-strict.set-inclusion}
Let $w \in S_n$, $\lambda\vdash n$, and $\hess: [n] \to [n]$ a Hessenberg function. Suppose that the tableau $\tabExtra{w,\lambda}$ is $\hess$-strict. The following hold:
\begin{enumerate}
\item The codimension $\codim(C_w \cap \Hess(\nilp_\lambda,\hess), C_w)$ of $C_w \cap \Hess(\nilp_\lambda,\hess)$ in $C_w$ is the cardinality of the set $\mathcal{T}_{w,\lambda,\hess}$ of pairs $i<j$ such that: 
    \begin{enumerate} 
    \item $j$ appears in the same column and below $i$ in $\tabExtra{w,\lambda}$, or, in any column to the left of the column containing $i$ in $\tabExtra{w,\lambda}$, and
    \item there exists an entry, call it $k$, lying in the box directly to the right of $i$ in $\tabExtra{w,\lambda}$, and $j > \hess(k)$. 
    \end{enumerate} 

\item The codimension $\codim(C_w \cap \Hess(\semi_{\lambda^t},\hess),C_w)$of $C_w \cap \Hess(\semi_{\lambda^t},\hess)$ in $C_w$ is equal to the cardinality of the set $\mathcal{S}_{w,\lambda,\hess}$ of pairs $k<j$ such that $j$ appears in a column strictly to the left of the column containing $k$ in $\tabExtra{w,\lambda}$, and $j>\hess(k)$. 

\item There is an injection of sets $\phi: \mathcal{T}_{w,\lambda,\hess} \hookrightarrow \mathcal{S}_{w,\lambda,\hess}$. 

\item The inequality $\codim(C_w \cap \Hess(\semi_{\lambda^t},\hess),C_w) \geq \codim(C_w \cap \Hess(\nilp_\lambda,\hess), C_w)$ holds.

\item If, in addition, $\tabExtra{w,\lambda}$ is column increasing, there is an injection $\psi: \mathcal{S}_{w,\lambda,\hess} \hookrightarrow \mathcal{T}_{w,\lambda,\hess}$, and $\codim(C_w \cap \Hess(\semi_{\lambda^t},\hess),C_w) \leq \codim(C_w \cap \Hess(\nilp_\lambda,\hess), C_w)$. 
\end{enumerate}
\end{lemma}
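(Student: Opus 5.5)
The plan is to compute each codimension directly from the dimension formulas in Propositions~\ref{prop.nildim} and~\ref{prop.ssdim}. Then parts (3) and (5) follow from explicit maps on boxes of $\tabExtra{w,\lambda}$: one moves from the entry $i$ to its right-hand neighbour $k$, the other moves back. Throughout I use $\dim C_w=\ell(w)=\#\inv(w)$.

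\textbf{Parts (1) and (2).} For (1), the note after Proposition~\ref{prop.nildim} says condition (1) there is exactly $(i,j)\in\inv(w)$. So the nil-inversions are the inversions that do \emph{not} satisfy the following: there is a box $k$ directly right of $i$ with $j>\hess(k)$. Subtracting from $\#\inv(w)$ therefore leaves exactly $\#\mathcal{T}_{w,\lambda,\hess}$.

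For (2), write $w=yv$ with $y\in W_{\lambda^t}$ and $v\in{}^{\lambda^t}W$. By Lemma~\ref{lem.cosets}, $\ell(w)=\ell(y)+\ell(v)$. Proposition~\ref{prop.ssdim} then gives codimension $\ell(v)-\ell_\hess(v)$. This counts the pairs $(k,j)\in\inv(v)$ with $j>\hess(k)$. By Lemma~\ref{lem.cosets.tab}(2), these are exactly the pairs $k<j$ with $j$ in a column strictly left of the column of $k$ and $j>\hess(k)$, i.e. $\mathcal{S}_{w,\lambda,\hess}$.

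\textbf{Parts (3) and (4).} Define $\phi(i,j)=(k,j)$, where $k$ is the entry directly right of $i$; it exists by condition (b). The image lies in $\mathcal{S}_{w,\lambda,\hess}$:
\begin{itemize}
\item $k<j$, since $j>\hess(k)\geq k$;
\item $j$ lies in the column of $i$ or to its left, while $k$ lies one column to the right of $i$, so $j$ is strictly left of $k$;
\item $j>\hess(k)$ holds by hypothesis.
\end{itemize}
The map $\phi$ is injective because $k$ determines $i$ as its left neighbour, and $j$ is unchanged. Part (4) then follows from (1), (2) and (3) by comparing cardinalities.

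\textbf{Part (5).} Given $(k,j)\in\mathcal{S}_{w,\lambda,\hess}$, $k$ is not in the first column, since $j$ lies strictly to its left. Rows are left-justified, so there is an entry $i$ directly left of $k$. Set $\psi(k,j)=(i,j)$. By $\hess$-strictness, $i\le\hess(k)<j$, so $i<j$ and condition (b) holds with this $k$. For condition (a), the column of $j$ is weakly left of the column of $i$.
\begin{itemize}
\item If it is strictly left, condition (a) holds.
\item If $j$ is in the same column as $i$, we need $j$ below $i$. This is where column-increasing is needed: $i<j$ in a column-increasing tableau forces $j$ below $i$.
\end{itemize}
Injectivity is clear, since $\psi$ inverts $\phi$. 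The codimension inequality follows as in (4).

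The only nonformal step is this same-column case in (5). Without column-increasing, $j$ could sit above $i$ in its column, and $\psi$ would fail.
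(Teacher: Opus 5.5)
Your proposal is correct and follows the paper's proof essentially step for step. Parts (1) and (2) come from subtracting the cell dimensions in Propositions~\ref{prop.nildim} and~\ref{prop.ssdim} from $\ell(w)$; your explicit use of $\ell(w)=\ell(y)+\ell(v)$ and Lemma~\ref{lem.cosets.tab}(2) simply spells out the paper's ``comparing'' step. Parts (3) and (5) use the same maps $(i,j)\mapsto(k,j)$ and $(k,j)\mapsto(i,j)$, with column-increasingness invoked exactly in the same-column case; you also correctly write $\phi(i,j)=(k,j)$, where the paper has the harmless typo $(j,k)$.
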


\begin{proof} 
First recall the well-known fact that $\dim C_w=\ell(w) = |\inv(w)|$. 
By Lemma~\ref{lem.cosets.tab} $\Inv(w)$ is precisely the set of pairs $(i,j)$ such that $i<j$ and $j$ appears in the same column and below $i$, or, in any column to the left of the column containing $i$ in $\tabExtra{w,\lambda}$. 

 Comparing with Proposition~\ref{prop.nildim}, it follows that 
$\mathrm{codim}(C_w \cap \Hess(\nilp_\lambda,\hess), C_w)$ is obtained by counting the number of pairs $i<j$ such that the following holds: there exists a box directly to the right of $i$, and if we let $k$ denote the entry in this box, then $j > \hess(k)$. This shows (1). 
Comparing next with Lemma~\ref{lemma.tabSS cardinality} and Proposition~\ref{prop.ssdim}, it follows that $\codim(C_w \cap \Hess(\semi_{\lambda^t},\hess),C_w)$ is computed by counting the number of pairs $k<j$ such that $j$ appears in a column strictly to the left of the column containing $k$, and $j > \hess(k)$. This shows (2). 

We next prove (3). Suppose $i<j$ is a pair in $\mathcal{T}_{w,\lambda,\hess}$, i.e., $i<j$ satisfy the conditions in (1)(a--b). Since $k$ appears directly to the right of $i$, it follows that $j$ appears in a column strictly to the left of $k$ in $\tabExtra{w,\lambda}$. Additionally, by condition (1)(b) we know $j>\hess(k)$, and since $\hess(k)\geq k$, it follows that $j>k$. Thus $(k,j)$ is contained in $\mathcal{S}_{w,\lambda,\hess}$. It is straightforward to see that the association $\phi: (i,j) \mapsto (j,k)$ (where $k$ to the right of $i$) is injective. This proves (3). The claim (4) follows directly from (1)-(3).

We now prove the last claim (5). Let $(k,j) \in \mathcal{S}_{w,\lambda,\hess}$.  Let $i$ denote the entry directly to the left of $k$ in $\tabExtra{w,\lambda}$. Such a box exists since the assumptions imply that $k$ is not in the leftmost column. Since $\tabExtra{w,\lambda}$ is $\hess$-strict, we know that $i\leq \hess(k)$, and by assumption $j>h(k)$ so $i<j$. Now, either $j$ is in the same column as $i$ or in a column strictly to the left of $i$. We have assumed that $\tabExtra{w,\lambda}$ is column increasing, so in the former case, $j$ must occur in the same column and below $i$ in $\tabExtra{w,\lambda}$. But this implies that in both the former and the latter case, the pair $i<j$ is contained in $\mathcal{T}_{w,\lambda,\hess}$. It is straightforward to see that this association $\psi: (k,j) \mapsto (i,j)$ is injective. The claim about codimensions immediately follows from the injectivity of $\psi$ and claims (1) and (2). 
\end{proof} 

The above lemma leads to the following, which shows that the dimensions of the affine cells for the nilpotent and semisimple cases agree if $w$ is such that $\tabExtra{w,\lambda}$ is both $\hess$-strict and column-increasing. 

\begin{lemma}\label{lem.h-strict} Let $w \in S_n$, $\lambda\vdash n$, and $\hess: [n] \to [n]$ a Hessenberg function. Suppose that the tableau $\tabExtra{w,\lambda}$ is $\hess$-strict. We have 
\begin{enumerate}
\item $\dim \left( C_w\cap \Hess(\nilp_\lambda,\hess)\right) \geq \dim \left(C_w\cap \Hess(\semi_{\lambda^t},\hess)\right)$, and furthermore,
\item if $\tabExtra{w,\lambda}$ is column increasing, that is, if $\tabExtra{w,\lambda}\in \cm_{\lambda, \hess}$, then $\dim \left(C_w\cap \Hess(\nilp_\lambda,\hess)\right) = \dim \left( C_w\cap \Hess(\semi_{\lambda^t},\hess)\right)$.
\end{enumerate}
\end{lemma}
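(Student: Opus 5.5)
The plan is to reduce both statements to the codimension comparison already proved in Lemma~\ref{lem.h-strict.set-inclusion}, working inside the Schubert cell $C_w$.

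First, since $\tabExtra{w,\lambda}$ is assumed $\hess$-strict, Proposition~\ref{prop.nildim} says $C_w\cap \Hess(\nilp_\lambda,\hess)$ is nonempty and isomorphic to an affine space. Proposition~\ref{prop.ssdim} says $C_w\cap\Hess(\semi_{\lambda^t},\hess)$ is always nonempty and affine. Both are therefore subvarieties of $C_w\cong \C^{\ell(w)}$ with well-defined dimensions. Their codimensions in $C_w$ are exactly the cardinalities described in Lemma~\ref{lem.h-strict.set-inclusion}(1),(2), since in each case the cell dimension is obtained by discarding from $\inv(w)$ the pairs failing the relevant condition. This is where Lemma~\ref{lem.cosets.tab} and Lemma~\ref{lemma.tabSS cardinality} enter: they identify $\ell(y)+\ell_\hess(v)$ as $\ell(w)$ minus the number of cross-column inversions $(k,j)$ with $j>\hess(k)$.

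Then I would write
\[
\dim(C_w\cap\Hess(\nilp_\lambda,\hess)) = \ell(w)-\#\mathcal{T}_{w,\lambda,\hess}
\]
and
\[
\dim(C_w\cap\Hess(\semi_{\lambda^t},\hess)) = \ell(w)-\#\mathcal{S}_{w,\lambda,\hess}.
\]
Claim (1) follows at once from the injection $\phi$ of Lemma~\ref{lem.h-strict.set-inclusion}(3), equivalently part (4): $\#\mathcal{T}\le\#\mathcal{S}$, so the nilpotent cell has dimension at least that of the semisimple cell. For claim (2), the additional column-increasing hypothesis gives the injection $\psi$ of Lemma~\ref{lem.h-strict.set-inclusion}(5), so $\#\mathcal{S}\le\#\mathcal{T}$. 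Together with (1) this yields equality of codimensions, hence of dimensions.

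There is essentially no obstacle, since the combinatorial work is done in the preceding lemma. The only point needing care is confirming that the ambient $\ell(w)=\dim C_w$ is the same in both codimension computations, and that the semisimple dimension formula of Proposition~\ref{prop.ssdim} (with $\tau=\lambda^t$) matches $\ell(w)-\#\mathcal{S}_{w,\lambda,\hess}$. For the latter I would check that every within-column inversion counted by $\ell(y)$ is kept, and that among the cross-column inversions counted by $\ell(v)$ exactly those with $j\le\hess(k)$ are kept, as in Lemma~\ref{lem.cosets.tab}.
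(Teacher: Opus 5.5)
Your proposal is correct and follows the paper's own argument: both parts come from the codimension descriptions $\#\mathcal{T}_{w,\lambda,\hess}$ and $\#\mathcal{S}_{w,\lambda,\hess}$ inside the common Schubert cell $C_w$ of dimension $\ell(w)$, together with the injections $\phi$ and $\psi$ of Lemma~\ref{lem.h-strict.set-inclusion}. The extra verification you sketch, namely that $\ell(y)+\ell_\hess(v)=\ell(w)-\#\mathcal{S}_{w,\lambda,\hess}$, is exactly how the paper proves part (2) of that lemma.
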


\begin{proof} 
The first claim is an immediate consequence of Lemma~\ref{lem.h-strict.set-inclusion}(4). For the second claim, Lemma~\ref{lem.h-strict.set-inclusion}(4) and (5) imply that if $\tabExtra{w,\lambda}$ is column increasing then $\codim(C_w \cap \Hess(\semi_{\lambda^t},\hess),C_w) = \codim(C_w \cap \Hess(\nilp_\lambda,\hess), C_w)$. This then implies the second claim. 
\end{proof}

This completes the proof of statement (1) in the outline given in Proposition~\ref{prop.steps} at the beginning of this section. 

\medskip

We next embark on (2) and (3) from Proposition~\ref{prop.steps}. 
First, we need some notation. For $w \in S_n$ and $\lambda\vdash n$, consider the tableau, denoted $\col(\tabExtra{w,\lambda})$, which is  obtained from $\tabExtra{w,\lambda}$ by sorting the entries within each column in increasing order (from top to bottom). Then there exists a unique permutation, which we denote by $\tilde{w}$, for which $\col(\tabExtra{w,\lambda}) =\tabExtra{\tilde{w},\lambda}$. The following two lemmas compare the dimensions of the affine cells corresponding to $\tabExtra{w,\lambda}$ and $\tabExtra{\tilde{w},\lambda}$. We begin with the semisimple case.

\begin{lemma}\label{lem.inc-cols.ss} Let $w \in S_n$ and $\lambda\vdash n$.  Let $\col(\tabExtra{w,\lambda})$ and $\tilde{w}$ be as above.  Then 
\[
\dim \left(C_w\cap \Hess(\semi_{\lambda^t},\hess)\right) \leq \dim \left( C_{\tilde{w}}\cap \Hess(\semi_{\lambda^t},\hess)\right).
\]
\end{lemma}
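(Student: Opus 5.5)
By Proposition~\ref{prop.ssdim} and Lemma~\ref{lemma.tabSS cardinality}, the dimension of the semisimple cell is $\ell(y)+\ell_\hess(v)$. Here $\ell(y)$ counts pairs $i<j$ with $j$ below $i$ in the same column of $\tabExtra{w,\lambda}$. The term $\ell_\hess(v)$ counts pairs $i<j\le\hess(i)$ with $j$ in a column strictly left of the column containing $i$. The plan is to compare these two counts for $\tabExtra{w,\lambda}$ and for $\col(\tabExtra{w,\lambda})=\tabExtra{\tilde w,\lambda}$.

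The key observation is that $\ell_\hess(v)$ depends only on which \emph{set} of entries lies in each column, not on their vertical order. The condition ``$i<j\le \hess(i)$ and $j$ lies in a column strictly to the left of $i$'' refers only to column membership. Sorting within columns preserves column contents, so $\ell_\hess(v)=\ell_\hess(\tilde v)$, where $\tilde w=\tilde y\tilde v$ is the coset decomposition of $\tilde w$ relative to $\lambda^t$. Equivalently, $\tilde w$ and $w$ lie in the same right coset $W_{\lambda^t} w$, so $\tilde v=v$. This holds because the columns of the base filling are exactly the blocks $\Lint{s}(\lambda^t)$, and permuting within a column corresponds to left multiplication by an element of $W_{\lambda^t}$. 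Either phrasing is enough.

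It remains to compare the $\ell(y)$ terms. For $\tabExtra{w,\lambda}$, $\ell(y)$ counts pairs in the same column whose larger entry lies below the smaller one. After sorting each column increasingly from top to bottom, every pair of entries in a common column has this property. Hence $\ell(\tilde y)=\sum_s\binom{\lambda^t_s}{2}$, which is the maximum possible value and is at least $\ell(y)$. Combining the two steps gives $\ell(y)+\ell_\hess(v)\le \ell(\tilde y)+\ell_\hess(\tilde v)$, which is the claim.

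I do not expect a real obstacle. The only point needing care is checking that sorting a column really gives $\tilde w\in W_{\lambda^t}w$ with the same shortest representative $v$, or, more simply, relying directly on the column-membership description in Lemma~\ref{lemma.tabSS cardinality}(2). That description already makes the invariance of the second term evident, so I would argue via the tableau description and avoid coset bookkeeping.
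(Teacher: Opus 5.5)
Your proposal is correct and follows essentially the same argument as the paper. The paper also splits the dimension as $\ell(y)+\ell_\hess(v)$ and notes that sorting within columns leaves the second term unchanged; it phrases this as $v=\tilde v$ via Lemma~\ref{lem.cosets.tab}(2), while you read off column membership. It then shows $\ell(y)\le\ell(\tilde y)$ because $\tilde y$ is the longest element of $W_{\lambda^t}$.
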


\begin{proof}
Let $w=yv$ for $y\in W_{\lambda^t}$ and $v\in {^{\lambda^t} W}$. Similarly, set $\tilde{w}=\tilde{y}\tilde{v}$ for $\tilde{y}\in W_{\lambda^t}$ and $\tilde{v}\in {^{\lambda^t} W}$. Since $\col(\tabExtra{w,\lambda}) = \tabExtra{\tilde{w},\lambda}$ is obtained from $\tabExtra{w,\lambda}$ by changing the order of entries within each column, and no swaps of entries between different columns, it follows from 
Lemma~\ref{lem.cosets.tab}(2) that $v=\tilde{v}$ (as permutations are uniquely determined by their inversion sets). Moreover, since $\tabExtra{\tilde{w},\lambda}$ is column increasing by construction, it follows from Lemma~\ref{lem.cosets.tab}(1) that $\ell(y) \leq \ell(\tilde{y})$ (in fact, $\tilde{y}$ is the longest element of $W_{\lambda^t}$). Now the claim follows from Proposition~\ref{prop.ssdim}.
\end{proof}

The next lemma tackles the analogous question for nilpotent Hessenberg varieties, which achieves Proposition~\ref{prop.steps}(2).

\begin{lemma}\label{lem.inc-cols.nilp} Let $w \in S_n$, $\lambda\vdash n$, and $\hess:[n] \to [n]$ a Hessenberg function.  Suppose that $\tabExtra{w,\lambda}$ is $\hess$-strict. Let $\col(\tabExtra{w,\lambda})$ and $\tilde{w}$ be as above. Then
\begin{enumerate} 
\item $\tabExtra{\tilde{w},\lambda}$ is $\hess$-strict, and 
\item if $\tilde{w}\neq w$, then $\dim \left(C_w\cap \Hess(\nilp_\lambda,\hess) \right)< \dim \left(C_{\tilde{w}}\cap \Hess(\nilp_\lambda,\hess)\right)$.
\end{enumerate} 
\end{lemma}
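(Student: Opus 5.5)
The plan is to treat the two parts separately. Part (1) is a monotonicity argument about sorted columns. Part (2) goes through the count of nil-inversions from Proposition~\ref{prop.nildim}, or equivalently through the codimension formula $\dim(C_w\cap\Hess(\nilp_\lambda,\hess)) = \ell(w)-\#\mathcal{T}_{w,\lambda,\hess}$ from Lemma~\ref{lem.h-strict.set-inclusion}(1).

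For (1), fix two adjacent columns $c$ and $c+1$ of $\tabExtra{w,\lambda}$, and let $b_1,\dots,b_m$ be the entries of column $c+1$. Row by row, $h$-strictness says the entry $a$ directly to the left of $b_r$ satisfies $a\le\hess(b_r)$. After sorting, the $r$-th row of column $c+1$ contains $b'_r$, the $r$-th smallest of the $b$'s. Exactly $r$ entries $b$ of column $c+1$ satisfy $b\le b'_r$. Their left neighbours are $r$ distinct entries of column $c$, and each satisfies $a\le\hess(b)\le\hess(b'_r)$ because $\hess$ is weakly increasing. So column $c$ contains at least $r$ entries that are $\le \hess(b'_r)$, and hence its $r$-th smallest entry $a'_r$ satisfies $a'_r\le\hess(b'_r)$. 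This is exactly $h$-strictness of $\col(\tabExtra{w,\lambda})$ in row $r$. Note that only the top $m$ rows of column $c$ matter, and the sorted column $c$ puts its $r$-th smallest entry in row $r$.

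For (2), I would pass from $\tabExtra{w,\lambda}$ to $\col(\tabExtra{w,\lambda})$ by a bubble sort. Each step swaps two vertically adjacent entries $x>y$ of a single column, with $x$ directly above $y$, and this strictly decreases the number of column non-inversions. I would first check that $h$-strictness survives each such swap, using the argument of (1) applied to only the two affected rows of columns $c-1,c,c+1$. Then it suffices to prove that one swap strictly increases the number of nil-inversions.

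Under the swap, the set of cross-column pairs $\{i,j\}$ is unchanged. The same-column pair $\{y,x\}$, however, becomes a genuine inversion (larger entry below), and its condition (2) holds. The reason is that the entry now sitting in $x$'s old row has its right neighbour $k$ satisfying $y\le\hess(k)$, so $y<x\le\hess(k)$ is what needs checking; if that fails, I would count the pair on the other side. This gives the strict gain. What remains is to show that no other nil-inversion is lost. The only pairs whose condition (2) changes are those with $i\in\{x,y\}$, whose right neighbours $k_x,k_y$ are exchanged, together with pairs involving the left neighbours of $x,y$, whose right neighbour changes between $x$ and $y$. For each threshold $j$, I would compare the counts before and after using $x>y$ and monotonicity of $\hess$. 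Pairs with $i=y$ gain range because $y$ moves up, and pairs with $i=x$ lose range. I would try to match the lost pairs with $i=x$ injectively to gained pairs with $i=y$ or to same-column pairs, in the style of the injections $\phi,\psi$ in Lemma~\ref{lem.h-strict.set-inclusion}.

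The main obstacle is this local bookkeeping in (2). Exchanging right neighbours can destroy some nil-inversions with $i=x$ and $j>\hess(k_y)$. I would try first to charge each such lost pair $(x,j)$ to the pair $(y,j)$, which is an inversion whenever $(x,j)$ is, since $y<x$. If the single-swap comparison turns out not to be monotone, I would instead compare $\tabExtra{w,\lambda}$ with $\tabExtra{\tilde w,\lambda}$ directly, one column at a time. For each fixed $j$, the goal would be to show that the number of $i$ in column $c$ with $i<j$ and $j\le\hess(\text{right neighbour of } i)$ is at least as large after sorting; this is the same rank argument as in (1). The strict inequality would then come from the strictly larger $\ell(\tilde y)$.
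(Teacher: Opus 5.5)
Your part (1) is correct, and it takes a different route from the paper. The rank argument proves $h$-strictness of $\col(\tabExtra{w,\lambda})$ directly: the $r$ smallest entries of column $c+1$ have $r$ distinct left neighbours, all $\le h(b'_r)$, so $a'_r\le h(b'_r)$. The paper instead obtains this inside an induction.

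Part (2), however, has a genuine gap. Your main route sorts by transpositions of vertically adjacent entries inside a \emph{single} column and asserts that $h$-strictness survives each one. This is false. Take $h=(2,2,4,4)$, $\lambda=(2,2)$, and the $h$-strict tableau with rows $(4,3)$ and $(1,2)$. Both columns are inverted. Swapping in column $1$ gives rows $(1,3),(4,2)$, and swapping in column $2$ gives rows $(4,2),(1,3)$; both violate $4\le h(2)=2$. The rank argument of (1) does not rescue this, because it needs the neighbouring column to be sorted in the same rows.

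Once $h$-strictness is lost, the nil-inversion count no longer computes a cell dimension, and your local estimates also fail. The guaranteed gain of $(y,x)$ rests on $x\le h(k_x)$, i.e.\ on $h$-strictness of the current tableau. For instance, from rows $(4,2),(1,3)$, sorting column $1$ creates the pair $(1,4)$, which is not a nil-inversion since $4>h(2)$. Separately, the losses $(x,j)$ and $(p,j)$, with $p$ the left neighbour of $x$, are never actually accounted for.

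This is exactly why the paper sorts rows $r,r+1$ simultaneously in \emph{all} columns, with $r$ maximal. Right neighbours are then reordered together with their left neighbours, which is what makes Cases 2a and 2b preserve $h$-strictness. The strict gain is located in the rightmost column having an inversion in those rows (Case 2b, 3 or 4).

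Your fallback is closer to a proof but is also incomplete. For $j$ in the same column $c$ as $i$, you must also require $i$ to lie above $j$. More seriously, strictness does not ``come from the strictly larger $\ell(\tilde y)$'', because a new column inversion need not satisfy condition (2). For $h=(2,2,4,4)$ and rows $(3,4),(1,2)$, sorting creates the column inversion $(1,3)$, but $3>h(2)$.

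The comparison can be completed as follows. Let $A$ be the number of entries of column $c$ smaller than $j$, let $m$ be the length of column $c+1$, and let $B$ be the number of entries $b$ of column $c+1$ with $h(b)\ge j$. By $h$-strictness, the $m-B$ rows whose right neighbour satisfies $h(b)<j$ hold entries smaller than $j$.
\begin{itemize}
\item For $j$ strictly left of column $c$, the count is exactly $A-m+B$, both before and after sorting.
\item For $j$ in column $c$, it is at most $A-m+B$ before sorting and exactly $A-m+B$ after.
\end{itemize}
Strictness then comes from an adjacent descent $a_r>a_{r+1}$ in the rightmost non-increasing column. Take $j=a_r$. Either row $r+1$ has no right neighbour, or $h(b_{r+1})\ge h(b_r)\ge a_r$, because column $c+1$ is already sorted. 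So $a_{r+1}<j$ sits in a row counted after sorting but lies below $j$ before, which makes the count for this $j$ strictly smaller before sorting.
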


\begin{proof} 
This result follows similarly to \cite[Theorem~6.3]{Ji-Precup2019}, but we include the details here for completeness. We prove the claim by induction on the number of ``column inversions'' in the tableau. More precisely, first set the temporary notation $T :=\tabExtra{w,\lambda}$. Recall that $T$ is $\hess$-strict by assumption. Consider the set 
$$
\mathrm{ColInv}(T) := \{(i,j)\,:\, i<j, \, i \textup{ appears below }\, j \textup{ in the same column as} \, \, j\}. 
$$
We prove the claims (1) and (2) by induction on 
$m := \# \mathrm{ColInv}(T)$. 

The base case is $m=0$, i.e., when $\mathrm{ColInv}(T)$ is empty. 
It is clear that in this case, $T$ is column increasing. Thus, in this case, $\col(T) = T$  and $\tilde{w} = w$, and claim (1) holds. In this case, claim (2) vacuously holds since $\tilde{w}=w$. 

Now suppose that $m>0$, and the claims (1) and (2) are true for $m-1$. In particular, since $m>0$, we know there exists at least one pair $i<j$ where $i$ appears below $j$ in the same column. 
Let $r$ be the largest value for which a column inversion in $T$ appears in rows $r, r+1$. Let $T'$ denote the tableau obtained from $T$ by sorting the entries of $T$ within columns, but only in rows $r$ and $r+1$, to be in increasing order (reading top to bottom). Let $w'$ denote the permutation such that $T'=\tabExtra{w',\lambda}$.
By construction, $T'$ has $m'$ column inversions where $m'<m$. Moreover, note that $\col(T) = \col(T')$, since $T$ and $T'$ have the same entries in each column.

To complete the proof by induction, we claim that it is sufficient to show 
\begin{enumerate} 
\item[(i)] $T'$ is $\hess$-strict, and 
\item[(ii)] the number of nil-inversions of $T'$ is strictly greater than those of $T$.
\end{enumerate} 
Indeed, if $T'$ is $\hess$-strict, then since $m'<m$ we may by induction assume that (1) and (2) hold for $T'$. 
We know $\col(T) = \col(T')=\tabExtra{\tilde{w},\lambda}$ for some $\tilde{w}\in S_n$. Thus the inductive assumption implies $\tabExtra{\tilde{w},\lambda}$ is $\hess$-strict, so (1) holds for $T$. Since (2) holds for $T'$ we have  $\dim \left(C_{w'} \cap \Hess(\nilp_\lambda, \hess)\right) < \dim \left(C_{\tilde{w}} \cap \Hess(\nilp_\lambda, \hess)\right)$. 
If statement (ii) also holds, then since the nil-inversions of $\tabExtra{w,\lambda}$ count the dimension of the affine cells of $\Hess(\nilp_\lambda,\hess)$ by Proposition~\ref{prop.nildim}, we may conclude $\dim \left(C_w \cap \Hess(\nilp_\lambda,\hess)\right) < \dim  \left( C_{w'} \cap \Hess(\nilp_\lambda,\hess)\right)$. Putting these together shows (2) for $w$. Thus, it only remains to prove (i) and (ii). 

To see (i) and (ii), we analyze $2\times 2$ sub-configurations in $T$ as below, where we fix the rows to be $r$, $r+1$ and consider all consecutive pairs of columns $c$, $c+1$:
\[\begin{ytableau}
    i & k_i\\
    j & k_j
\end{ytableau}\]
when we transform $T$ into $T'$.
In addition, we must consider cases that arise when the two rows in question, rows $r$ and $r+1$, have a different number of boxes, so it may happen that we have a sub-configuration of the form 
\[\begin{ytableau}
    i & k_i\\
    j 
\end{ytableau}\]
(so there is no box to the right of $j$). Similarly there is the case when there are no boxes to the right of both $i$ and $j$ so we have 
\[\begin{ytableau}
    i \\
    j 
\end{ytableau}\]

Case 1 analyzes when $(i,j)$ and $(k_i,k_j)$ have the same relative order, and Case 2 analyzes when they are different. Then Case 3 analyzes when there is no box to the right of $j$, and Case 4 analyzes when there is no box to the right of both $i$ and $j$.

\smallskip
\noindent {\sf Case 1a ($i<j$ and $k_i < k_j$)}:   In this case, the resulting $2 \times 2$ configuration is unchanged: 
\[
\begin{ytableau}
    i & k_i\\
    j & k_j
\end{ytableau} \ .
\]
Thus, $\hess$-strictness is preserved in this sub-diagram. We show that the nil-inversions of $T$ involving $i$ or $j$ in the first coordinate are all nil-inversions of $T'$ in this case. If $(i,j)$ was a nil-inversion of $T$, then since the $2 \times 2$ configuration remains unchanged, $(i,j)$ is still a nil-inversion of $T'$. Suppose some $(i,s)$ is a nil-inversion of $T$ for $s \neq j$, i.e., suppose $s>i$, where $s$ lies strictly below or to the left of $i$ in $T$ and $s\leq \hess(k_i)$. Then $(i,s)$ is also a nil-inversion of $T'$, because the entry to the right of $i$ in $T'$ is the same as that in $T$. The result follows similarly for $(j,s)$ a nil-inversion of $T$. 

\smallskip
\noindent {\sf Case 1b ($i>j$ and $k_i>k_j$):}  In this case, the resulting configuration in $T'$ is 
\[
\begin{ytableau}
    j & k_j\\
    i & k_i
\end{ytableau} \ .
\]
Since the entries to the right of both $i$ and $j$ remain the same, $\hess$-strictness is preserved for this sub-diagram. 
Since $i>j$ in this case, $(i,j)$ cannot be a nil-inversion of the original tableau $T$. However, if $i \leq h(k_j)$, then $(j,i)$ is a nil-inversion in $T'$. Now suppose some $(i,s)$ is a nil-inversion in $T$ for $s \neq j$. Then, by the same reasoning as in case (1a) above, $(i,s)$ is a nil-inversion in $T'$. Similar reasoning shows that any nil-inversion $(j,s)$ in $T$ is also a nil-inversion in $T'$. 

\smallskip
\noindent {\sf Case 2a ($i<j$ and $k_i > k_j$)}: 
Now the resulting configuration in $T'$ is 
\[
\begin{ytableau}
    i & k_j\\
    j & k_i
\end{ytableau} \ .
\]
Since $T$ is $\hess$-strict, we know $i<j\leq \hess(k_j)$. Since $k_j < k_i$ and Hessenberg functions are non-decreasing, $\hess(k_j) \leq \hess(k_i)$. Putting these together we obtain $j \leq \hess(k_i)$. Thus this re-ordering preserves $\hess$-strictness in this sub-diagram.

We now consider the nil-inversions. We see the only nil-inversions of $T$ that may be affected are of the form $(i,j)$, $(j,s)$ where $s>j$, or $(i,s)$ where $s>i$. If $(i,j)$ is a nil-inversion in $T$, then $(i,j)$ is also a nil-inversion in $T'$, since $j\leq \hess(k_j)$ by $\hess$-strictness of $T$. 
Next, suppose $(j,s)$ is a nil-inversion with $j<s$. In particular, $s \leq \hess(k_j)$. Then $(j,s)$ is also a nil-inversion of $T'$ since $s\leq \hess(k_j)\leq \hess(k_i)$. 

Finally, suppose $(i,s)$ is a nil-inversion in $T$ where $i<s \leq \hess(k_i)$ and $s \neq j$. However, since $k_j<k_i$, it may happen that $s \not \leq \hess(k_j)$, i.e.,  $(i,s)$ is not a nil-inversion in $T'$. Note that if $s \not \leq \hess(k_j)$ then we must have $j<s$, since if $j>s$ then $s<j\leq \hess(k_j)$, contradicting the assumption. Thus, we have seen that if $(i,s)$ is a nil-inversion in $T$ but $(i,s)$ is not a nil-inversion in $T'$, then $s \leq \hess(k_i)$, $ s>\hess(k_j)$, and $j<s$. This implies that $(j,s)$ is a nil-inversion in $T'$. On the other hand, note that $(j,s)$ cannot be a nil-inversion in $T$, since $s>\hess(k_j)$. Thus, it follows that if we lose a pair $(i,s)$ (i.e., it is a nil-inversion in $T$ but not in $T'$) then we gain a pair $(j,s)$ (i.e., $(j,s)$ was not a nil-inversion in $T$ but it is a nil-inversion in $T'$).

\smallskip
\noindent {\sf Case 2b ($i>j$ and $k_i<k_j$)}: 
In this case, the resulting configuration in $T'$ is
\[
\begin{ytableau}
    j & k_i\\
    i & k_j
\end{ytableau} \ .
\]
We first claim that $\hess$-strictness is preserved. Indeed, by $\hess$-strictness of the original $T$, 
we know $j<i\leq \hess(k_i)$. Moreover, since Hessenberg functions are non-decreasing, we also have $i\leq \hess(k_i)\leq \hess(k_j)$. Thus this re-ordering preserves $\hess$-strictness.

We now consider the nil-inversions. Again, the only inversions that may be affected are of the form $(i,s)$ where $s>i$ or $(j,s)$ where $s>j$. Note that $(j,i)$ is not a nil-inversion of $T$ since $i$ was not below $j$ in the same column, but $(j,i)$ is a nil-inversion of $T'$ since $i$ is now below $j$ and $i\leq h(k_i)$ because $T$ is $h$-strict.

Now suppose $(i,s)$ is a nil-inversion of $T$ with $s>i$, $s \neq j$. Then $(i,s)$ is also a nil-inversion of $T'$ since, by assumption, $s\leq \hess(k_i)$, and since $k_i < k_j$ by assumption in this case, we also know $\hess(k_i) \leq \hess(k_j)$. This implies $s \leq \hess(k_j)$, as required. Lastly, suppose $(j,s)$ is a nil-inversion of $T$ where $s>j$ and $s \neq i$. Suppose further that $(j,s)$ is not a nil-inversion of $T'$, i.e., $s \not \leq \hess(k_i)$. We claim that this implies that $i<s$. To see this, suppose that $i >s$. Then $s<i\leq \hess(k_i)$, but this contradicts the above. Thus we conclude both that $i<s$ and (since $(j,s)$ is a nil-inversion in $T$) $s\leq \hess(k_j)$. This implies that $(i,s)$ is a nil-inversion of $T'$. Note that, from the fact that $s \not \leq \hess(k_i)$, we know that $(i,s)$ cannot be a nil-inversion of $T$. It follows that (similar to Case 1b above), if we lose a pair $(j,s)$ (i.e., it is a nil-inversion of $T$ but it is not a nil-inversion of $T'$) then we gain a pair $(i,s)$ (i.e., $(i,s)$ is not a nil-inversion of $T$ but it is a nil-inversion of $T'$). 

From the above, it follows that the number of nil-inversions in $T'$ is greater than the number of nil-inversions in $T$ in this case because we have gained at least one nil-inversion, namely $(j,i)$.  

\smallskip

We now discuss Case 3, where there is a box to the right of $i$ but none to the right of $j$ in $T$, and Case 4, where there is no box to the right of both $i$ and $j$ in $T$. In fact, the arguments in these cases follow by essentially the same reasoning as Cases 1a and 2b, respectively, as the absence of the boxes in these cases imply that there are fewer conditions to check. As in the Case 2b above, if $i>j$ in either of Case 3 or Case 4 and hence the resulting configuration in $T'$ places the $i$ above the $j$, then $T'$ gains at least one nil-inversion. 

Finally, note that one of Case 2b, or the Case 3 with $i>j$, or Case 4 with $i>j$, always occurs at least once in the process of sorting rows $r$ and $r+1$. Indeed, one of these cases occurs when sorting the pair $(i,j)$ with $i>j$ such that $i$ is in row $r$, $j$ is in row $r+1$, and so that this pair occurs in column $c$ with $c$ as large as possible having an inversion between entries in the rows. But as we saw above, in each of these cases, $T'$ gains at least one nil-inversion. In all other cases, the proof above shows that the number of nil-inversions in $T'$ is weakly greater than the number in $T$. Since the strict increase must occur at least once, this proves (ii). We have also seen above that $\hess$-strictness is preserved in every case. This shows (i), so the claim follows.  
\end{proof}

\begin{example}\label{ex:nilpSort}
Consider $\hess=(3,3,3,4,6,6,7)$ and $\tabExtra{w,\lambda}$ below where $\lambda=(3,3,1)$. To the right, we consider the tableau $\tabExtra{\tilde{w},\lambda}$ formed by sorting the columns of $\tabExtra{w,\lambda}$ into increasing order.
    \[\tabExtra{w,\lambda}=
\begin{ytableau}
    3 & 2 & 7\\
    1 & 4 & 5\\
    6
\end{ytableau}
\quad
\rightarrow
\quad
\tabExtra{\tilde{w},\lambda}=
\begin{ytableau}
    1 & 2 & 5\\
    3 & 4 & 7\\
    6
\end{ytableau}\]
By Lemma~\ref{lem.inc-cols.nilp}, since $\tabExtra{{w},\lambda}$ is $\hess$-strict so is $\tabExtra{\tilde{w},\lambda}$. 
Further, we find:
\begin{align*}
    &\text{nil-inversions of $\tabExtra{w,\lambda}$: }(2,3),(2,4),(2,6),(4,6),(5,6)\\
    &\text{nil-inversions of $\tabExtra{\tilde w,\lambda}$: }(1,3),(2,3),(2,4),(2,6),(4,6),(5,6),(5,7).
\end{align*}
Thus by using Proposition~\ref{prop.nildim}, we see $\dim C_w\cap \Hess(\nilp_\lambda,\hess)\leq \dim C_{\tilde{w}}\cap \Hess(\nilp_\lambda,\hess)$, which confirms the statement of Lemma~\ref{lem.inc-cols.nilp} in this example.
\end{example}

Lemma~\ref{lem.inc-cols.nilp} completes the proof of Proposition~\ref{prop.steps}(2), showing the maximum dimension of the affine cells of $\Hess(\nilp_\lambda,\hess)$ must be achieved for $w$ such that $w\in \cm_{\lambda, \hess}$. 

It now remains to complete the proof of Proposition~\ref{prop.steps}(3), for which we began the argument in Lemma~\ref{lem.inc-cols.ss}. Indeed, in Lemma~\ref{lem.inc-cols.ss} we showed that sorting entries within columns to be increasing can only weakly increase the dimension of the corresponding affine cell in the semisimple Hessenberg variety. Given an arbitrary permutation $w$ and corresponding tableau $\tabExtra{w,\lambda}$ which is not $\hess$-strict, it remains to find a sequence of permutations (or equivalently, their corresponding tableau) which is also non-decreasing in dimensions of corresponding affine cells, and terminates at an $\hess$-strict tableau. This task occupies the remainder of this section.  We start with Definitions~\ref{def.non-h-strict-cons-pairs}
and~\ref{def.swap} below. 

\begin{definition}\label{def.non-h-strict-cons-pairs} Let $\tab$ be a column increasing tableau of shape $\lambda$. Assume that $\tab$ is \emph{not} $\hess$-strict, i.e., there exists at least one pair $(i,j)$ appearing in consecutive boxes in $\tab$, such as $\begin{ytableau}i & j \end{ytableau}$\,,  for which 
\begin{eqnarray}\label{eqn.h-violation}
i> \hess(j).
\end{eqnarray}
Set
\[\imax :=\max\{i\in[n]  :  (i,j) \text{ appears as $\begin{ytableau}i & j \end{ytableau}$ in $T$ and satisfies} ~\eqref{eqn.h-violation}\}.\]
We call $\imax$ the \emph{maximal non-$\hess$-strict label} in $\tab$.
\end{definition} 

For the purpose of this discussion, if a pair $(i,j)$ appears in horizontally consecutive boxes in $\tab$ as $\begin{ytableau}i & j \end{ytableau}$ then we call it a \emph{consecutive pair} (in $\tab$). 
Now we outline a procedure to produce a new tableau from a tableau $\tab$, where $\tab$ is not $\hess$-strict and has increasing columns (as in Definition~\ref{def.non-h-strict-cons-pairs}). Suppose that $\imax$ in $T$ appears in column $k$ of $\tab$. Then we know there exists at least one consecutive pair  in columns $k$ and $k+1$ satisfying~\eqref{eqn.h-violation}. Now we define the following notation.  
\begin{itemize}
\item Reading from top to bottom, let $(i_0,j_0)$ be the \emph{first} consecutive pair in columns $k$ and $k+1$ satisfying~\eqref{eqn.h-violation}, and 
\item reading from top to bottom, let $(i_1, j_1)$ be the \emph{last} consecutive pair in columns $k$ and $k+1$ satisfying~\eqref{eqn.h-violation}.
\end{itemize}
We note that it is possible for $(i_0,j_0)=(i_1,j_1)$. Since $T$ is column increasing, $i_1=\imax$.

\begin{definition}\label{def.swap} Following the notation of Definition~\ref{def.non-h-strict-cons-pairs} and the discussion above, we define: 
\begin{itemize} 
\item $\tilde{\tab}(k)$ to be the tableau obtained from $\tab$ by exchanging the entries $j_0$ and $i_1$, and then re-sorting the columns in increasing order (reading top to bottom), and 
\item Since $k$ is well-defined from $T$ as the column containing~$\imax$, we use the notation $\swap{\tab}:=\tilde{\tab}(k)$.
\end{itemize}
\end{definition}

\begin{rem}
    If $(i_0,j_0)=(i_1,j_1)$ in the construction above, then $\tilde{\tab}(k)$ is the tableau obtained from $\tab$ by exchanging entries $j_0$ and $i_0$ and then re-sorting the columns to be in increasing order. 
\end{rem}

\begin{example}\label{ex:swapTab}
Consider $\hess=(2,2,4,4,5,7,7,9,11,12,12,12)$ and $\tabExtra{w,\lambda}$ below. Here, $T$ is not $\hess$-strict and $\imax=10$, which lies in the second column. Thus $\swap{\tabExtra{w,\lambda}}=\tilde{\tabExtra{w,\lambda}}(2)$. 
In $\tabExtra{w,\lambda}$, we have shaded the boxes containing $(i_0,j_0)$ in light gray and the boxes containing $(i_1,j_1)$ in darker gray. The middle diagram 
is $\tabExtra{w,\lambda}$ with $j_0$ and $i_1$ exchanged. The rightmost diagram is $\swap{\tabExtra{w,\lambda}}$, formed by sorting the columns of the middle tableau into increasing order.
\[\tabExtra{{w},\lambda}=
\begin{ytableau}
    2 &  *(lightgray) 3 & *(lightgray) 1 \\
    5 & 7 & 4 \\
    6 & *(gray) 10 & *(gray) 8 \\
    12 & 11 & 9
\end{ytableau}
\quad
\longrightarrow
\quad
\begin{ytableau}
    2 &   3 &  10 \\
    5 & 7 & 4 \\
    6 &  1 & 8 \\
    12 & 11 & 9
\end{ytableau}
\quad
\longrightarrow
\quad
\begin{ytableau}
    2 &   1 &  4 \\
    5 & 3 & 8 \\
    6  &  7 & 9 \\
    12 & 11 & 10
\end{ytableau} \ .
\]
\end{example}

Now we show in Lemma~\ref{lemma.SScells-swap} that when we perform the transformation  $\tab \mapsto \swap{T}$ of Definition~\ref{def.swap}, the dimension of the corresponding Hessenberg--Schubert cells in the semisimple Hessenberg variety $\Hess(S_{\lambda^t},\hess)$ is non-decreasing.

\begin{lemma}\label{lemma.SScells-swap} Let $\tabExtra{w,\lambda}$ be a column increasing tableau that is not $\hess$-strict with  $\imax$ the maximal non-$\hess$-strict label. Let $\tilde{w}\in S_n$ be the permutation such that $\swap{\tabExtra{w,\lambda}} = \tabExtra{\tilde{w},\lambda}$. Then 
\[
\dim \left( C_w\cap \Hess(S_{\lambda^t},\hess) \right) \leq \dim \left( C_{\tilde{w}}\cap \Hess(S_{\lambda^t},\hess)\right).
\]
\end{lemma}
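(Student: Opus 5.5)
The plan is to reduce the inequality to comparing the $\hess$-inversion counts $\ell_\hess(v)$ and $\ell_\hess(\tilde v)$ of the shortest coset representatives, and then to compare those two counts through an explicit matching of the pairs that change.

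\textbf{Reduction.} Write $w=yv$ and $\tilde w=\tilde y\tilde v$ with $y,\tilde y\in W_{\lambda^t}$ and $v,\tilde v\in {}^{\lambda^t}W$. Both $\tabExtra{w,\lambda}$ and $\swap{\tabExtra{w,\lambda}}$ are column increasing. Hence, as in the proof of Lemma~\ref{lem.inc-cols.ss}, both $y$ and $\tilde y$ are the longest element of $W_{\lambda^t}$, so $\ell(y)=\ell(\tilde y)$. By Proposition~\ref{prop.ssdim} it therefore suffices to prove $\ell_\hess(v)\le \ell_\hess(\tilde v)$. By Lemma~\ref{lemma.tabSS cardinality}(2), $\ell_\hess(v)$ is the number of pairs $(x,z)$ with $x<z\le\hess(x)$ and $z$ in a column strictly left of $x$. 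This count depends only on the \emph{sets} of entries in each column, not on their order within the columns. So the re-sorting step in Definition~\ref{def.swap} is irrelevant, and we only need to track the effect of the exchange itself.

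\textbf{The exchange and the pairs it affects.} Write $a:=j_0$, which moves from column $k+1$ to column $k$, and $b:=i_1=\imax$, which moves from column $k$ to column $k+1$. Note that $b>\hess(a)\ge a$. The left/right relation between two entries changes only if one of them is $a$ or $b$ and the other lies in column $k$ or $k+1$.
\begin{itemize}
\item The pair $\{a,b\}$ is counted neither before nor after. Before the exchange it would need $b\le\hess(a)$; after it, $a$ lies left of $b$ and would need $b<a$.
\item Entries $c$ in other columns keep their relative position to $a$ and to $b$, so they contribute no change.
\end{itemize}
Hence the net change $\ell_\hess(\tilde v)-\ell_\hess(v)$ equals $\#G_1+\#G_2-\#L_1-\#L_2$, where
\[
G_1=\{c\in\col_{k+1}\setminus\{a\}: c<a\le\hess(c)\},\quad G_2=\{c\in\col_{k}\setminus\{b\}: b<c\le\hess(b)\},
\]
\[
L_1=\{c\in\col_{k}\setminus\{b\}: c<a\le\hess(c)\},\quad L_2=\{c\in\col_{k+1}\setminus\{a\}: c<b\le\hess(c)\}.
\]
Here $\col_k$ and $\col_{k+1}$ denote the entry sets of columns $k$ and $k+1$ in $\tabExtra{w,\lambda}$.

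\textbf{Main step: an injection $L_1\sqcup L_2\hookrightarrow G_1\sqcup G_2$.} This is where the specific choices in Definition~\ref{def.swap} must be used: maximality of $\imax$, $(i_0,j_0)$ being the first violating pair, $(i_1,j_1)$ being the last, and column-increasingness. Let $a$ sit in row $r_0$ and $b$ in row $r_1\ge r_0$.
\begin{itemize}
\item \emph{Elements of $L_2$.} If $c\in L_2$, then $\hess(c)\ge b>\hess(a)$, so $c>a$ and $c$ lies below row $r_0$ in column $k+1$. Its left neighbour $i$ must satisfy $i\le\hess(c)$. Otherwise $(i,c)$ would be a violation with $i>\hess(c)\ge b=\imax$, contradicting maximality. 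I would try to use this to pair $c$ with its row-mate in column $k$, landing in $G_2$ or in a complementary count.
\item \emph{Elements of $L_1$.} If $c\in L_1$, then $c<a$, so in particular $c<b$, and column-increasingness places $c$ above $b$ in column $k$. I would compare $c$ with the entry to its right in column $k+1$. Because $(i_0,j_0)$ is the first violating pair, rows above $r_0$ are non-violating, and this should give candidates in $G_1$.
\end{itemize}

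I expect this row-by-row matching to be the main obstacle. If a direct injection is elusive, my fallback is to count row by row. For each row I would pair the entries of columns $k$ and $k+1$ and show that each row's contribution to $\#G_1+\#G_2-\#L_1-\#L_2$ is nonnegative. The rows above $r_0$, those between $r_0$ and $r_1$, and those below $r_1$ would be treated separately, using that rows below $r_1$ in these two columns contain no violation and rows above $r_0$ contain none either.
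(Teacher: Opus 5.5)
\textbf{There is a genuine gap: your set $L_1$ has its roles reversed, and the injection is never actually constructed.}

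Your reduction to $\ell_\hess(v)\le\ell_\hess(\tilde v)$ is correct, as is your remark that only the exchange of $a=j_0$ and $b=i_1$ matters and not the re-sorting. Your sets $G_1$, $G_2$, $L_2$ are also correct.

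The problem is $L_1$. For $c\in\col_k$, the entry $c$ lies to the \emph{left} of $a$ in $\tabExtra{w,\lambda}$. So by Lemma~\ref{lemma.tabSS cardinality}(2) the pair that can be lost is $(a,c)$ with $a<c\le\hess(a)$. The correct set is $L_1=\{c\in\col_k\setminus\{b\}: a<c\le\hess(a)\}$. A pair $(c,a)$ with $c<a$ and $c$ to the left of $a$ is never counted, before or after the exchange.

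With your $L_1$, the inequality your main step aims at is false. Take $\lambda=(2,2,2)$, $\hess=(2,3,3,4,5,6)$, and the column increasing tableau with rows $(2,1)$, $(4,3)$, $(5,6)$. Then $\imax=4$, $a=3$, $b=4$. Your $L_1=\{2\}$ and $G_1=G_2=L_2=\emptyset$, so your formula predicts a net change of $-1$. In fact $\ell_\hess(v)=\ell_\hess(\tilde v)=1$.

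Even after this correction, the injection is only sketched, and it is the entire content of the lemma. The $L_2$ sketch also lacks the decisive step.

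\emph{The $L_2$ case.} Knowing $c>a$ only puts $c$ below row $r_0$. If $c$ sat strictly between rows $r_0$ and $r_1$, its row-mate in column $k$ would be smaller than $b$ and could not lie in $G_2$. What you need is that $\hess(c)\ge b=i_1>\hess(j_1)$ forces $c>j_1$. Then $c$ lies strictly below row $r_1$, so its left neighbour $d$ satisfies $d>i_1=b$. Combined with $d\le\hess(c)$ (your maximality argument) and $\hess(c)\le\hess(b)$ (since $c<b$), this gives $d\in G_2$.

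\emph{The corrected $L_1$ case.} Here $c\le\hess(j_0)<i_0$ puts $c$ strictly above row $r_0$. Its right neighbour $c'$ then satisfies $c'<j_0$ by column increasingness, and $c\le\hess(c')$ because $(i_0,j_0)$ is the first violating pair. Hence $c'<a<c\le\hess(c')$, i.e.\ $c'\in G_1$.

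These row-mate maps $L_1\to G_1$ and $L_2\to G_2$ are injective, which finishes the proof. This is precisely Cases 3 and 4 of the paper's injection $\phi\colon\hinv{\hess,v}\to\hinv{\hess,\tilde v}$.
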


\begin{proof}
Consider the coset decomposition $w=yv$ for $y\in W_{\lambda^t}$ and $v\in {^{\lambda^t}W}$. 
Similarly, we have $\tilde{w}=\tilde{y}\tilde{v}$ for $\tilde{y}\in W_{\lambda^t}$ and $\tilde{v}\in {^{\lambda^t} W}$.

We first claim that $\ell(y)= \ell(\tilde{y})$. To see this, recall from Lemma~\ref{lemma.tabSS cardinality}(1) that $\ell(y)$ is the number of inversions occurring within the columns of $T$. Since both $\tabExtra{w,\lambda}$ and $\swap{\tabExtra{w,\lambda}}$ are defined to be column increasing, we see that $\ell(y)= \ell(\tilde{y})$. (In fact, $y=\tilde y$ is the longest element of the parabolic subgroup $W_{\lambda^t}$.) 

By Proposition~\ref{prop.ssdim}, it suffices to show that  $\ell_\hess(v)\leq \ell_\hess(\tilde{v})$.
By Lemma~\ref{lemma.tabSS cardinality}  
we can compute these quantities by examining the tableaux $\tabExtra{w,\lambda}$ and $\tabExtra{\tilde{w},\lambda}$. Following Definition~\ref{def.swap}, suppose $\imax$ occurs in column $k$ and $k+1$. We illustrate these two columns of $\tabExtra{w,\lambda}$ below, as well as the initial swap that is part of the construction of $\swap{\tabExtra{w,\lambda}} = \tabExtra{\tilde{w},\lambda}$. 
\[\begin{ytableau}
     \vdots & \vdots        \\
    i_0 & j_0\\
    \vdots & \vdots        \\
    i_1 & j_1\\
     \vdots & \vdots        
\end{ytableau}\longrightarrow
\begin{ytableau}
 \vdots & \vdots        \\
    i_0 & i_1\\
    \vdots & \vdots        \\
    j_0 & j_1\\
     \vdots & \vdots        
\end{ytableau}\]
To obtain $\tabExtra{\tilde{w},\lambda}$ we sort the columns of the resulting tableau into increasing order (reading top to bottom). 

By the hypothesis, the columns of $\tabExtra{{w},\lambda}$ are increasing, so we know $i_1>i_0$. By assumption, $i_0>\hess(j_0)$. Because the Hessenberg function $\hess$ is weakly increasing, this implies that $i_0>j_0$, so $i_1>j_0$ also. 
Additionally, this gives that $i_1>i_0>\hess(j_0)$.

Recall from Lemma~\ref{lemma.tabSS cardinality}(2) that $\ell_\hess(v)$ and $\ell_\hess(\tilde{v})$ count the pairs $(i,j)$ with $i<j\leq \hess(i)$ in the respective tableaux such that $j$ appears in any column strictly to the left of the column containing $i$. In order to prove $\ell_\hess(v)\leq \ell_\hess(\tilde{v})$, we will describe an injection $\phi:\hinv{\hess,v}\rightarrow\hinv{\hess,\tilde{v}}$.
Let $(a<b)\in \hinv{\hess,v}$. To define $\phi$ on this pair, we take cases. 

\smallskip

\noindent{\sf Case 1 ($(a<b)=(j_0<i_1)$):}
In order for $(j_0,i_1)$ to appear in $\hinv{\hess,v}$, we must have $i_1 \leq \hess(j_0)$. However, the discussion above showed $i_1 > \hess(j_0)$, so this case never occurs.

\smallskip
\noindent{\sf Case 2 ($\{a,b\}\cap \{i_1,j_0\}=\emptyset$):} By the definition of $\tabExtra{\tilde{w},\lambda}$, it follows that if $(a<b) \in \hinv{\hess,v}$ then $(a<b)\in \hinv{\hess,\tilde{v}}$. We define $ \phi(a<b) :=(a<b)$ in this case.

\smallskip
\noindent{\sf Case 3 ($\{a,b\} \cap \{i_1,j_0\} = \{j_0\}$):} In this case, either $a=j_0$ and $b \neq i_1$ or $a \neq i_1$ and $b=j_0$.  Note that if $b=j_0$ then since $(a<b) \in \hinv{\hess,v}$ by assumption, $a$ must lie in a column strictly to the right of column $k+1$. Similarly, if $a=j_0$, then $b$ must occur in column $k$ or in an earlier column of $\tabExtra{w,\lambda}$.  It follows that if either $b=j_0$ or $a=j_0$ and $b$ lies strictly to the left of column $k$, then we have that $(a<b) \in \hinv{\hess,\tilde{v}}$. Thus, in these cases, we define $\phi(a<b) := (a<b)$. 

The remaining case is when $a=j_0$ and $b$ lies in column $k$. Since $(a=j_0<b)$ is in $\hinv{\hess,w}$, we know $b\leq \hess(j_0)$. Then since $\hess(j_0)<i_0$ by assumption on $(i_0,j_0)$, it follows that $b<i_0$. Since the columns of $\tabExtra{w,\lambda}$ are increasing by assumption, $b$ must appear above $i_0$ in the $k$-th column. Let $c$ denote the entry in the $(k+1)$-th column directly to the right of $b$ in $\tabExtra{w,\lambda}$. We can visualize columns $k,k+1$ in $\tabExtra{{w},\lambda}$ as well as $\swap{\tabExtra{w,\lambda}}$ as: 
\[\begin{ytableau}
     \vdots & \vdots        \\
     b & c        \\
     \vdots & \vdots        \\
    i_0 & j_0\\
    \vdots & \vdots        \\
    i_1 & j_1\\
     \vdots & \vdots        
\end{ytableau}
 \longrightarrow
 \begin{ytableau}
  \vdots & \vdots        \\
      b & c        \\
  \vdots & \vdots        \\
     i_0 & i_1\\
     \vdots & \vdots        \\
     j_0 & j_1\\
      \vdots & \vdots        
 \end{ytableau} \ .
\]
By the construction of $\swap{\tabExtra{w,\lambda}}$, the pair $(i_0,j_0)$ is the first pair (reading from top to bottom) satisfying~\eqref{eqn.h-violation}, so it follows that $b\leq \hess(c)$. We see $j_0=a<b\leq \hess(c)$, so $(c<j_0)\in\hinv{\hess,\tilde{v}}$. We define $ \phi(j_0<b) :=(c<j_0)$ in this case.

\smallskip

\noindent{\sf Case 4 ($\{a,b\} \cap \{i_1,j_0\} = \{i_1\}$):}  In this case, either $a=i_1$ and $b \neq j_0$ or $a \neq j_0$ and $b=i_1$. By reasoning similar to Case 3, if $a=i_1$ then $b$ has to lie in a column strictly to the left of column $k$ of $\tabExtra{w,\lambda}$. If $b=i_1$ then $a$ lies in column $k+1$ or strictly to the right of column $k+1$. If $a=i_1$ or if $b=i_1$ and $a$ lies strictly to the right of column $k+1$, then $a<b$ remains an element in $\hinv{\hess,\tilde{v}}$ and we define $\phi(a<b) = (a<b)$ in these cases. 

The remaining case is when $a<b=i_1$ and $a$ lies in column $k+1$. 
By the fact that $(a<i_1)\in\hinv{\hess,v}$, we know $i_1\leq \hess(a)$. Then since Hessenberg functions are increasing and $\hess(j_1)<i_1$ by assumption, this forces $j_1<a$. By reasoning as in Case 3, it follows that $a$ lies below $j_1$ in column $k+1$ of $\tabExtra{w,\lambda}$. Let $d$ denote the entry of the box directly to the left of $a$ in column $k$. We can visualize columns $k,k+1$ in $\tabExtra{{w},\lambda}$ and  $\swap{\tabExtra{w,\lambda}}$ as: 
\[\begin{ytableau}
     \vdots & \vdots        \\
    i_0 & j_0\\
    \vdots & \vdots        \\
    i_1 & j_1\\
      \vdots & \vdots        \\
     d & a        \\
     \vdots & \vdots  
\end{ytableau}
 \longrightarrow
 \begin{ytableau}
     \vdots & \vdots        \\
    i_0 & i_1\\
    \vdots & \vdots        \\
    j_0 & j_1\\
      \vdots & \vdots        \\
     d & a        \\
     \vdots & \vdots  
\end{ytableau} \ .
\]
Similarly to Case 3, the pair $(i_1,j_1)$ is the last consecutive pair satisfying~\eqref{eqn.h-violation}, so $d\leq \hess(a)$. Now since $a<i_1$ we know $\hess(a)\leq \hess(i_1)$. Thus $d\leq \hess(i_1)$, so $(i_1<d)\in\hinv{\hess,\tilde{v}}$.
We define $ \phi(a<i_1) :=(i_1<d)$ in this case.

Considering the above cases together, it is straightforward to check that $\phi$ is indeed an injection. Thus we have $\ell_\hess(v)\leq \ell_\hess(\tilde{v})$, and the result follows. 
\end{proof}

\begin{example}\label{ex:swapTabDim}
Continuing Example~\ref{ex:swapTab}, let $\swap{\tabExtra{w,\lambda}} = \tabExtra{\tilde{w}, \lambda}$. 
Let $w=yv$ and $\tilde{w}=\tilde{y}\tilde{v}$ for $y,\tilde{y}\in W_{\lambda^t}$ and $v,\tilde{v}\in {^{\lambda^t}W}$. By column increasingness we see $\ell(y)=\ell(\tilde{y})=3\cdot \binom{4}{2}$. Additionally, we see  $5=\ell_\hess(v)\leq \ell_\hess(\tilde{v})=5$, so
$8=\dim \left( C_w\cap \Hess(S_{\lambda^t},\hess)\right) \leq \dim \left( C_{\tilde{w}}\cap \Hess(S_{\lambda^t},\hess)\right)=8$ in this case, as indicated by Lemma~\ref{lemma.SScells-swap}.
\end{example}

We now make use of the ``swap'' transformation to build a sequence of tableau. The goal is to construct a tableau which is both $\hess$-strict and is column increasing. We must show that this is possible, and that the process ends in finitely many steps. This is the content of the next lemma.

\begin{lemma}\label{lem:swapCol} Let $\tab$ be a tableau of shape $\lambda$ that is column increasing but is not $\hess$-strict. Consider the sequence $\tab_1, \tab_2, \ldots$ of column increasing tableau of shape $\lambda$ such that $\tab_1= \tab$ and $\tab_i = \swap{\tab_{i-1}}$ for $i>1$. Then there exists some $\ell\geq 2$ such that $\tab_\ell$ is $\hess$-strict, i.e., this sequence terminates.
\end{lemma}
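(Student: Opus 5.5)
The plan is to find a strictly increasing, bounded monovariant on column increasing tableaux of shape $\lambda$ that grows with each application of $\mathsf{swap}$. That forces the sequence $\tab_1,\tab_2,\ldots$ to stop. It can only stop because some $\tab_\ell$ is $\hess$-strict, since $\swap{\tab_i}$ is defined exactly when $\tab_i$ is column increasing and not $\hess$-strict.

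First I will check that the sequence is well defined while it continues. By Definition~\ref{def.swap}, $\swap{\tab_{i-1}}$ is re-sorted to be column increasing. So as long as $\tab_{i-1}$ is not $\hess$-strict, Definitions~\ref{def.non-h-strict-cons-pairs} and~\ref{def.swap} apply to it and produce $\tab_i$, which is again column increasing of shape $\lambda$. Also $\tab_1=\tab$ is not $\hess$-strict, so any terminal index satisfies $\ell\geq 2$.

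For the monovariant I take
\[
\Phi(\tab) := \sum_{m=1}^{n} m\cdot \col_{\tab}(m),
\]
where $\col_{\tab}(m)$ is the index of the column of $\tab$ containing $m$. The key observation, made in the proof of Lemma~\ref{lemma.SScells-swap}, is that the swap exchanges $i_1=\imax$ in column $k$ with $j_0$ in column $k+1$, and that $i_1>i_0>\hess(j_0)\geq j_0$. Re-sorting within columns does not change which column any entry lies in. Hence passing from $\tab$ to $\swap{\tab}$ moves $i_1$ one column to the right and $j_0$ one column to the left, and leaves every other entry in its column. Therefore
\[
\Phi(\swap{\tab})-\Phi(\tab) = i_1 - j_0 >0.
\]

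Since $\Phi$ takes integer values in a finite range (bounded by $n\cdot\lambda_1\cdot n$, or simply because there are finitely many tableaux of shape $\lambda$), it cannot increase forever. So the sequence must reach some $\tab_\ell$ for which $\mathsf{swap}$ is undefined, and that $\tab_\ell$ is $\hess$-strict.

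The only delicate point is confirming that the swap moves the larger label to the right, i.e.\ that $i_1>j_0$. This follows from the inequality chain already established in the proof of Lemma~\ref{lemma.SScells-swap}, together with $\hess(j)\geq j$. I do not expect a real obstacle beyond stating this carefully.
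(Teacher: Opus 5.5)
Your argument is correct, but it uses a different monovariant from the paper's. The paper proves termination through Lemma~\ref{prop:lenSwaps}. Writing $\tab_i=\tabExtra{w^{(i)},\lambda}$, it shows that the Bruhat length $\ell(w^{(i)})$ strictly decreases. This comes from the coset decomposition $w=yv$, where $y$ is the longest element of $W_{\lambda^t}$ for every column increasing tableau, together with the tableau description of $\inv(v)$ in Lemma~\ref{lem.cosets.tab}. You instead use $\Phi(\tab)=\sum_m m\cdot\col_{\tab}(m)$. Its increment $i_1-j_0>0$ is read off in one line, because re-sorting never moves an entry between columns, and no Coxeter-theoretic input is needed. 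One small correction: one only has $i_1\geq i_0$, with equality when the two pairs coincide. Your conclusion $i_1>j_0$ still follows.

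The paper's monovariant has a geometric meaning. Since $\dim C_w=\ell(w)$, each swap passes to a strictly smaller Schubert cell, while by Lemma~\ref{lemma.SScells-swap} the dimension of the semisimple Hessenberg--Schubert cell weakly increases. Its bookkeeping, however, is less transparent than yours. The paper asserts $\ell(v)=\ell(\tilde v)+1$, but every $m$ in columns $k,k+1$ with $j_0<m<i_1$ also loses an inversion with $i_1$ or $j_0$; in Example~\ref{ex:swapTab} the drop is $6$, not $1$. Only the strict inequality is needed, so both arguments go through, but yours has an exact, easily verified increment.
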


We first illustrate the lemma in a worked example. 

\begin{example}\label{ex:nilpSwapSeq}
Consider $\hess=(2,3,3,4,6,6,7,8)$. Below is a sequence $T_1,T_2,T_3,T_4,T_5$, where $\tab_i = \swap{\tab_{i-1}}$, terminating in an $\hess$-strict tableau $T_5$. Each arrow between $\tab_i$ and $\tab_{i+1}$ is labeled with the maximal non-$\hess$-strict label $\imax$ in $\tab_i$, and in each tableau, the consecutive pairs $(i_0,j_0)$ and $(\imax = i_1,j_1)$ are highlighted.
    \[\tab_1=
\begin{ytableau}
   *(lightgray) 6 & *(lightgray) 3 & 1\\
    7 & 4 & 2\\
    *(lightgray) 8 & *(lightgray) 5
\end{ytableau}
\ \ 
\xrightarrow{\,8\,}
\ \ 
\begin{ytableau}
    3 & *(lightgray) 4 & *(lightgray) 1\\
    6 & *(lightgray) 5 & *(lightgray) 2\\
    7 & 8
\end{ytableau}
\ \ 
\xrightarrow{\,5\,}
\ \ 
\begin{ytableau}
    *(lightgray) 3 & *(lightgray) 1 & 2\\
    *(lightgray) 6 &  *(lightgray) 4 & 5\\
    7 & 8
\end{ytableau}
\ \ 
\xrightarrow{\,6\,}
\ \ 
\begin{ytableau}
    1 & *(lightgray) 4 & *(lightgray) 2\\
    3 & 6 & 5\\
    7 & 8
\end{ytableau}
\ \ 
\xrightarrow{\,4\,}
\ \ 
\begin{ytableau}
    1 & 2 & 4\\
    3 & 6 & 5\\
    7 & 8
\end{ytableau}\,.\]
Lemma~\ref{lem:swapCol} ensures that this sequence will terminate with an $\hess$-strict tableau.
\end{example}

The proof of Lemma~\ref{lem:swapCol} requires the following preliminary result.

\begin{lemma}\label{prop:lenSwaps}
Suppose $\tabExtra{{w},\lambda}$ is a  column increasing tableau  such that $\tabExtra{{w},\lambda}$ is not $\hess$-strict. Let $\tabExtra{\tilde{w},\lambda}=\swap{\tabExtra{w,\lambda}}$. Then $\ell(w)> \ell(\tilde{w})$.
\end{lemma}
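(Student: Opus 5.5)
The plan is to compare the two factors in the coset decompositions separately. Write $w=yv$ and $\tilde w=\tilde y\tilde v$ with $y,\tilde y\in W_{\lambda^t}$ and $v,\tilde v\in {}^{\lambda^t}W$. By Lemma~\ref{lem.cosets}, $\ell(w)=\ell(y)+\ell(v)$ and $\ell(\tilde w)=\ell(\tilde y)+\ell(\tilde v)$. Both $\tabExtra{w,\lambda}$ and $\swap{\tabExtra{w,\lambda}}$ are column increasing. So Lemma~\ref{lemma.tabSS cardinality}(1) gives $\ell(y)=\ell(\tilde y)$, exactly as in the proof of Lemma~\ref{lemma.SScells-swap}; in fact both equal the longest element of $W_{\lambda^t}$. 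It therefore suffices to prove $\ell(v)>\ell(\tilde v)$.

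By Lemma~\ref{lem.cosets.tab}(2), $\ell(v)$ is the number of pairs $a<b$ such that $b$ lies in a column strictly to the left of the column containing $a$ in $\tabExtra{w,\lambda}$. This count depends only on the \emph{set} of entries in each column. Hence the re-sorting step in Definition~\ref{def.swap} does not change it, and $\ell(\tilde v)$ can be computed from the tableau obtained by merely exchanging $j_0$ (in column $k+1$) and $i_1$ (in column $k$). Recall from the proof of Lemma~\ref{lemma.SScells-swap} that $j_0<i_0\le i_1$, so $j_0<i_1$. The exchange moves the smaller entry $j_0$ from column $k+1$ to column $k$ and the larger entry $i_1$ from column $k$ to column $k+1$. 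Only pairs containing $j_0$ or $i_1$ can change status.

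I will then check the contributions case by case.
\begin{enumerate}
\item The pair $(j_0,i_1)$ itself is counted before the exchange, since the larger entry $i_1$ is to the left. It is not counted after, since $i_1$ is now to the right. This is a loss of exactly $1$.
\item Let $x$ be in a column other than $k,k+1$. Then $x$ lies strictly left of, or strictly right of, both columns $k$ and $k+1$. So the left/right relation of $x$ with each of $j_0$ and $i_1$ is unchanged, and these pairs contribute no change.
\item Let $x\ne i_1$ be in column $k$. Before the exchange, only the pair with $j_0$ can count, and it counts exactly when $x>j_0$. After, only the pair with $i_1$ can count, and it counts exactly when $x>i_1$. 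The net change $[x>i_1]-[x>j_0]$ is $\le 0$ because $j_0<i_1$.
\item Let $x\ne j_0$ be in column $k+1$. Before the exchange, the relevant pair is with $i_1$ and counts iff $i_1>x$. After, it is with $j_0$ and counts iff $j_0>x$. Again the change is $\le 0$.
\end{enumerate}

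Summing these contributions gives $\ell(\tilde v)\le \ell(v)-1$, hence $\ell(\tilde w)<\ell(w)$. The only real care needed is the bookkeeping in the third and fourth cases, making sure no pair is double counted. The key observation that re-sorting within columns leaves $\ell(v)$ unchanged removes any interaction with the sorting step.
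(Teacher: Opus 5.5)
Your proof is correct and follows the paper's route. You use the coset decompositions $w=yv$ and $\tilde w=\tilde y\tilde v$ with $y=\tilde y$ the longest element of $W_{\lambda^t}$, and the inversion $(j_0,i_1)$ of $v$ that the swap destroys. Your cases 3 and 4 are in fact more careful than the paper, which claims $\ell(v)=\ell(\tilde v)+1$ because nothing else changes; that equality fails in general (in Example~\ref{ex:swapTab} the pair $(1,7)$ is also lost and the total drop is $6$), while your bound $\ell(\tilde v)\le \ell(v)-1$ is exactly what the lemma needs.
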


\begin{proof}
Suppose $i_1=\imax$ is the maximal non-$\hess$-strict label in $\tabExtra{w,\lambda}$, lying in column $k$. Let $(i_0,j_0)$ and $(i_1,j_1)$ be the first and the last consecutive pairs in columns $k$ and $k+1$ satisfying~\eqref{eqn.h-violation}, respectively. Since $T$ is column increasing, $i_1\geq i_0$ (with equality occurring when $(i_0,j_0)=(i_1, j_1)$). Since $(i_0,j_0)$  satisfies~\eqref{eqn.h-violation}, by the definition of Hessenberg functions, we know also $i_0>j_0$ and thus $i_1>j_0$. Now $\swap{\tabExtra{w,\lambda}}= \tabExtra{\tilde{w},\lambda}$ is the tableau obtained by exchanging $j_0$ and $i_1$ and re-sorting the columns. 

Consider the coset decompositions $w=yv$ and $\tilde{w}=\tilde{y}\tilde{v}$ with $y,\tilde{y}\in W_{\lambda^t}$ and $v,\tilde{v}\in {^{\lambda^t}}W$. Since both $\tabExtra{w,\lambda}$ and $\tabExtra{\tilde w,\lambda}$ are column increasing, $y=\tilde{y}$ is the longest element of $W_{\lambda^t}$. Using Lemma~\ref{lem.cosets}, the desired statement follows once we prove $\ell(v)> \ell(\tilde v)$. 

Since $i_1>j_0$ and $i_1$ is in column $k$ while $j_0$ is in column $k+1$, we have by Lemma~\ref{lem.cosets.tab}(2) that the pair $(j_0, i_1)$ is an inversion of $v$, but $(j_0, i_1)$ is not an inversion of $\tilde{v}$ after the swap. Since all other entries in the columns of the respective tableaux remain the same (up to reordering), we have $\ell(v)=\ell(\tilde{v})+1$ and thus $\ell(v)>\ell(\tilde v)$.
\end{proof}

\begin{proof}[Proof of Lemma~\ref{lem:swapCol}]
Let $S=(w^{(i)})_{i\geq 1}$ be the sequence of permutations where $\tabExtra{w^{(i)},\lambda}=\tab_i$.  By Lemma~\ref{prop:lenSwaps},  we see the sequence whose $i$-th entry is $\ell(w^{(i)})$ is a strictly decreasing sequence of nonnegative integers. Thus $S$ must be finite. 
\end{proof}

Figure~\ref{fig:tree} illustrates Lemma~\ref{lem:swapCol} across multiple steps. It shows how many, potentially different, tableaux map to the same element of $\cm_{\lambda, \hess}$. In the figure, we highlight the pairs $(i_0, j_0)$ and $(i_1, j_1)$ to indicate the entries swapped in each application of $\swap{-}$. 

With Lemma~\ref{lem:swapCol} in hand, we have completed the proof of all parts of Proposition~\ref{prop.steps}. 
Now we are ready to prove the main theorem of this section. 

\begin{proof}[Proof of Theorem~\ref{thm.ss-nilp}]
The statement that $\dim \left( \Hess(\nilp_\lambda, \hess)\right) = \dim \left( \Hess(\semi_{\lambda^t}, \hess)\right)$ follows immediately from Proposition~\ref{prop.steps}. Recall that Lemma~\ref{lemma: orbits in g lambda} describes the semisimple and nilpotent orbits in the sheet $\fg_\lambda$. In particular, we have $\nilp_\lambda, \semi_{\lambda^t}\in \fg_\lambda$. 

Given any nilpotent matrix $\nilp$ in $\fg_\lambda$, we know $\nilp$ must have Jordan type $\lambda$ and thus $\nilp$ and $\nilp_\lambda$ are conjugate. Any semisimple matrix $\semi\in \fg_\lambda$ must be conjugate to $\semi_{\lambda^t}$ where $\semi_{\lambda^t}$ is as in Definition~\ref{definition.semi.tau} and has the same eigenvalues as $\semi$. The last statement of the theorem now follows immediately from the fact that $\dim \left( \Hess(\nilp_\lambda, \hess)\right) = \dim \left( \Hess(\semi_{\lambda^t}, \hess)\right)$ by Lemma~\ref{lemma.conjugation}. 
\end{proof}

\begin{figure}
\begin{tikzpicture}[
    scale=1,
    every node/.style={font=\sffamily},
    box/.style={draw, thick, minimum width=0.8cm, minimum height=0.6cm, inner sep=2pt},
    tri/.style={draw, thick, regular polygon, regular polygon sides=3, inner sep=1pt},
    blacknode/.style={fill, draw, minimum size=3mm, inner sep=0pt}
]
 \node (C) at (0,0) {$\ytableausetup{boxsize=1.1em}
{\begin{ytableau}
*(lightgray)6 & *(lightgray) 2 & 1 \\
7 & 3 & 5 \\
*(lightgray) 8 & *(lightgray)4
\end{ytableau}}$};

\node (B2) at (3,0) {$\ytableausetup{boxsize=1.1em}
{\begin{ytableau}
2 & 3 & 1 \\
*(lightgray) 6 & *(lightgray) 4 & 5 \\
7 & 8
\end{ytableau}}$};

\node (B1) at (0,-2) {$\ytableausetup{boxsize=1.1em}
{\begin{ytableau}
2 & 3 & 1 \\
4 & 6 & 5 \\
*(lightgray) 8 & *(lightgray) 7
\end{ytableau}}$};

\node (B3) at (6,-2) {$\ytableausetup{boxsize=1.1em}
{\begin{ytableau}
2 & 3 & 1 \\
*(lightgray)7 & *(lightgray) 4 & 5 \\
*(lightgray) 8 & *(lightgray)6
\end{ytableau}}$};

\node (B5) at (0,-4) {$\ytableausetup{boxsize=1.1em}
{\begin{ytableau}
*(lightgray)4 & *(lightgray) 2 & 1 \\
7 & 3 & 5 \\
*(lightgray) 8 & *(lightgray)6
\end{ytableau}}$};

\node (A1) at (3,-4) {$\ytableausetup{boxsize=1.1em}
{\begin{ytableau}
2 & *(lightgray)3 & *(lightgray) 1 \\
4 & *(lightgray) 6 &*(lightgray) 5 \\
7 & 8
\end{ytableau}}$};

\node (A2) at (9,-4) {$\ytableausetup{boxsize=1.1em}
{\begin{ytableau}
2 & 1 & 5 \\
4 & 3 & 6 \\
*(lightgray) 8 & *(lightgray) 7
\end{ytableau}}$};

\node (T) at (6,-4) {$\ytableausetup{boxsize=1.1em}
{\begin{ytableau}
2 & 1 & 5 \\
4 & 3 & 6 \\
7 & 8
\end{ytableau}}$};

    \draw[->] (C) -- (B2);
    \draw[->] (B2) -- (A1);
    \draw[->] (B1) -- (A1);
    \draw[->] (B3) -- (A1);
    \draw[->] (B5) -- (A1);
     \draw[->] (A1) -- (T);
     \draw[->] (A2) -- (T);
\end{tikzpicture}
     \caption{For $\lambda=(3,3,2)$ and $h=(2,3,4,4,5,7,7,8)$, above is a tree of tableaux $\tab_{w,\lambda}$ where the arrows connect tableaux $\tab_{w,\lambda}\rightarrow\swap{\tab_{w,\lambda}}$. In each tableau, the consecutive pairs $(i_0,j_0)$ and $(i_1,j_1)$ used for the swap are shaded in gray. The tableaux $\tab_{w,\lambda}$ in the first row all have $\dim \left(C_w\cap \Hess(S_{\lambda^t},\hess) \right)=8$, and those in the second row are $9$-dimensional, and those in the bottom row are $10$-dimensional.} 
     \label{fig:tree}
\end{figure}


\section{All Hessenberg varieties over a fixed sheet $\fg_\lambda$}\label{sec.general case}

In this section we extend the dimension stability result of Theorem~\ref{thm.ss-nilp} to all Hessenberg varieties $\Hess(\mx,\hess)$ for $\hess:[n]\to [n]$ as $\mx$ varies over the sheet $\fg_\lambda$. The main result of this section is the following. 

\begin{thm} \label{thm.main.general}
Let $\lambda\vdash n$ and $\hess: [n] \to [n]$ be a Hessenberg function. Then the dimension $\dim(\Hess(\mx,\hess))$ is constant as $\mx$ varies over the sheet $\fg_\lambda$, i.e., for any $\mx,\mx' \in \fg_\lambda$, we have $\dim(\Hess(\mx,\hess)) = \dim(\Hess(\mx',\hess))$. 
\end{thm}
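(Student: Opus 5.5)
We reduce the general case to Theorem~\ref{thm.ss-nilp} by using the Jordan decomposition of $\mx$ together with the affine paving of Hessenberg varieties for arbitrary $\mx$ due to the second author~\cite{Precup2013}. By Lemma~\ref{lemma.conjugation} we may assume $\mx=\semi+\nilp$ is in a normal form: $\semi$ is diagonal with distinct eigenvalues $c_1,\dots,c_m$ arranged in blocks according to a composition $\tau=(|\lambda^{(1)}|,\dots,|\lambda^{(m)}|)$, and $\nilp$ is block diagonal, with block $i$ equal to a copy of $\nilp_{\lambda^{(i)}}$ acting on the $\tau_i$-dimensional eigenspace. The cited result gives an affine paving of $\Hess(\mx,\hess)$ whose cells are indexed by $w=yv$, $y\in W_\tau$, $v\in{}^\tau W$. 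Each cell is (roughly) a product of an affine space of dimension $\ell_\hess(v)$ with a cell of a nilpotent Hessenberg variety in the product of flag varieties $\prod_i \Fl(\C^{\tau_i})$, namely $\prod_i \Hess(\nilp_{\lambda^{(i)}},\hess_{v,i})$. Here $\hess_{v,i}$ is the Hessenberg function on $[\tau_i]$ induced from $\hess$ by restricting to the positions $v^{-1}(\Lint{i}(\tau))$. Consequently
\[
\dim\Hess(\mx,\hess)=\max_{v\in{}^\tau W}\Big(\ell_\hess(v)+\sum_{i=1}^m \dim \Hess(\nilp_{\lambda^{(i)}},\hess_{v,i})\Big).
\]

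The key step is to apply Theorem~\ref{thm.ss-nilp} to each factor. For each $i$ we replace $\dim\Hess(\nilp_{\lambda^{(i)}},\hess_{v,i})$ by $\dim\Hess(\semi_{(\lambda^{(i)})^t},\hess_{v,i})$. Then we observe that the resulting expression, maximized over $v$, is the same formula applied to the semisimple matrix $\semi'=\bigoplus_i \semi_{(\lambda^{(i)})^t}$. Here all eigenvalues in different blocks are chosen distinct, so $\semi'$ is genuinely semisimple. Concretely, Proposition~\ref{prop.ssdim} gives an iterated coset decomposition $w=y'y''v$ with $y''\in W_\sigma$ and $\sigma$ refining $\tau$. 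Lemma~\ref{lem.lengthadd} then gives $\ell_\hess$ of the ${}^\sigma W$ part as $\ell_\hess(v)$ plus the $\hess_{v,i}$-lengths of the blocks. Hence
\[
\dim\Hess(\mx,\hess)=\dim\Hess(\semi',\hess).
\]

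Finally we identify the sheet. The matrix $\semi'$ has eigenvalue multiplicities given by the union of the columns of all $\lambda^{(i)}$. Its sheet partition is the transpose of the sorted multiset of these column lengths. Since $\lambda_{\mx,j}=\sum_i\lambda^{(i)}_j$, the column lengths of $\lambda_\mx$ equal exactly the sorted union of the columns of the $\lambda^{(i)}$. So $\semi'\in\fg_{\lambda}$, and by Lemma~\ref{lemma: orbits in g lambda} and Lemma~\ref{lemma.conjugation}, $\dim\Hess(\mx,\hess)=\dim\Hess(\semi_{\lambda^t},\hess)$ for every $\mx\in\fg_\lambda$. This proves the theorem.

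The main obstacle is the middle step: stating the paving of~\cite{Precup2013} precisely enough that the block-wise induced Hessenberg functions $\hess_{v,i}$ are genuine Hessenberg functions, since Theorem~\ref{thm.ss-nilp} must apply to them. The same precision is needed to make the additivity of $\ell_\hess$ under the refined coset decomposition match exactly. I would first verify on the tableau model, as in Lemma~\ref{lem.cosets.tab}, that the semisimple cell dimensions for $\semi'$ split as $\ell_\hess(v)$ plus the semisimple block dimensions for $\hess_{v,i}$. Only after that check would I invoke Theorem~\ref{thm.ss-nilp} blockwise.
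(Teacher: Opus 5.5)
Your proposal is correct and is essentially the paper's argument: put $\mx$ in $\lambda$-standard form, apply Precup's formula (Proposition~\ref{prop.gendim}) with the induced functions $\hess_u^{(i)}$, use Theorem~\ref{thm.ss-nilp} blockwise, match with your $\semi'$ (the paper's $\semi_\tau$ from Construction~\ref{constr.semi.tau}) via the additivity of Lemma~\ref{lemma.refinement}, and place $\semi'$ in $\fg_\lambda$ as in Lemma~\ref{lemma.sheet.same}; the obstacle you flag is already handled by Lemma~\ref{lem.cosetH}, which guarantees that the $\hess_u^{(i)}$ are genuine Hessenberg functions. The only difference is organizational: you write both dimensions as a maximum over minimal coset representatives $u$ of $\ell_\hess(u)$ plus a sum of blockwise variety dimensions and compare term by term, whereas the paper proves the two inequalities by choosing maximal cells.
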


As in the previous section, the proof requires several steps. In our arguments below, we use an inductive formula of the second author (Proposition~\ref{prop.gendim} below) for the dimension of a Hessenberg--Schubert cell. We briefly recall this context. 

Given a composition $\gm=(\gm_1,\cdots,\gm_k)\vDash n$, let $\semi_\mu$ denote a diagonal matrix as specified in Definition~\ref{definition.semi.tau}. Then we define 
\[
L_\gm := Z_G(\semi_\gm) := \{ g \in GL_n(\C)  :  g\semi_\gm g^{-1} = \semi_\mu \}. 
\]
This is a standard Levi subgroup of $GL_n(\C)$ isomorphic to $GL_{\mu_1}(\C) \times GL_{\mu_2}(\C) \times \cdots \times GL_{\mu_k}(\C)$; the matrices in $L_\gm$ are the block-diagonal $n\times n$ matrices with block sizes $\mu_1, \mu_2, \ldots, \mu_k$. Let $B_\mu:= B\cap L_\mu$ denote the Borel subgroup of $L_\mu$ obtained by intersecting with the subgroup of upper triangular matrices. 

The Lie algebra of $L_\mu$ is the Levi subalgebra of $\mathfrak{gl}_n(\C)$ defined by
\[
\fl_\mu := \fz(\semi_\mu) = \{X\in \mathfrak{gl}_n(\C): [X, \semi_\mu] = 0 \},
\]
where $\fz(\semi_\gm)$ denotes the centralizer of $\semi_\gm$. We have an isomorphism $\fl_\mu \cong \mathfrak{gl}_{\gm_1}(\C) \times \cdots \times \mathfrak{gl}_{\gm_k}(\C)$. Let $\mathfrak{b}_{\gm} := \mathfrak{b} \cap \fl_\gm$ denote the Lie algebra of $B_\mu$. 

For what follows, it is useful to introduce some terminology which allows us to define Hessenberg varieties using the identification of the flag variety $\Fl(\C^n)$ with the quotient space $GL_n(\C)/B$. To any Hessenberg function $\hess: [n] \to [n]$, we associate a \emph{Hessenberg subspace corresponding to $\hess$} as follows: 
\begin{equation}\label{eqn.Hessenberg.space}
H:= H(\hess) := \{ \mx \in \mathfrak{gl}_n(\C)  :  \mx_{ij} = 0 \, \textup{ for } \, i > \hess(j) \},
\end{equation}
where $\mx_{ij}$ denotes the $(i,j)$-th matrix entry of $\mx$. In general a subspace $H\subseteq \mathfrak{gl}_n(\C)$ is a \emph{Hessenberg space with respect to the Borel subalgebra $\fb$} (that is, $H=H(\hess)$ for some Hessenberg function $\hess$) if and only if $\fb+[\fb, H]\subseteq H$. We may generalize this in a straightforward manner to $\fl_\mu$ by taking products. 
Following the notation just established, we have the following:

\begin{lemma}[Precup~{\cite[Proposition 5.2]{Precup2013}}]\label{lem.cosetH} Let $\hess:[n] \to [n]$ be a Hessenberg function and $H$ its associated Hessenberg subspace. Let $\mu\vDash n$. For each $v\in  {^\mu}W$, the subspace $H_v:= \dot v H \dot v^{-1}\cap \fl_\mu$ of $\fl_\mu$ is a Hessenberg subspace of $\fl_\mu$ with respect to the Borel subalgebra $\fb_\mu:= \fb\cap \fl_\mu$. 
\end{lemma}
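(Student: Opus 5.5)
We have to show that $H_v=\dot v H\dot v^{-1}\cap\fl_\mu$ satisfies $\fb_\mu+[\fb_\mu,H_v]\subseteq H_v$. The plan is to work with root spaces, i.e.\ with matrix units. Both $\dot vH\dot v^{-1}$ and $\fl_\mu$ are spanned by matrix units, so $H_v$ is too. Since $\dot v E_{ij}\dot v^{-1}=E_{v(i)v(j)}$, we get
\[
H_v=\operatorname{span}\{E_{v(i)v(j)} : i\le \hess(j),\ v(i),v(j)\in \Lint{s}(\mu)\text{ for some } s\}.
\]
The proof then reduces to two combinatorial checks.

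\textbf{Containment of $\fb_\mu$.} Let $E_{ab}\in\fb_\mu$, so $a\le b$ and $a,b$ lie in the same block $\Lint{s}(\mu)$. Because $v\in{}^\mu W$, the values in each $\Lint{s}(\mu)$ appear in increasing order in the one-line notation of $v$. Hence $v^{-1}$ is increasing on each block, and $a\le b$ gives $v^{-1}(a)\le v^{-1}(b)$. Then $v^{-1}(a)\le v^{-1}(b)\le\hess(v^{-1}(b))$, so $E_{ab}\in \dot vH\dot v^{-1}$, and therefore $E_{ab}\in H_v$.

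\textbf{Stability under bracketing with $\fb_\mu$.} It suffices to treat $[E_{ab},E_{cd}]$ with $E_{ab}\in\fb_\mu$ and $E_{cd}\in H_v$. Since
\[
[E_{ab},E_{cd}]=\delta_{bc}E_{ad}-\delta_{da}E_{cb},
\]
there are two cases.

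\emph{Case $b=c$.} We need $E_{ad}\in H_v$. Here $a,b,d$ lie in one block, with $a\le b$ and $v^{-1}(b)\le \hess(v^{-1}(d))$. Block-monotonicity of $v^{-1}$ gives $v^{-1}(a)\le v^{-1}(b)\le\hess(v^{-1}(d))$, so $E_{ad}\in H_v$.

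\emph{Case $d=a$.} We need $E_{cb}\in H_v$. Now $v^{-1}(c)\le\hess(v^{-1}(a))$ and $a\le b$ in the same block, so $v^{-1}(a)\le v^{-1}(b)$. Since $\hess$ is weakly increasing, $v^{-1}(c)\le \hess(v^{-1}(a))\le\hess(v^{-1}(b))$, and $E_{cb}\in H_v$.

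Both inclusions use only that $H$ corresponds to the Hessenberg function $\hess$ and that $v^{-1}$ is increasing on blocks. Bracketing preserves the block structure automatically, since $\fl_\mu$ is a subalgebra.

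\textbf{The product structure.} This is the main point needing care. Under $\fl_\mu\cong\prod_s\mathfrak{gl}_{\mu_s}(\C)$, the space $H_v$ splits as $\bigoplus_s H_v^{(s)}$, where $H_v^{(s)}$ is spanned by the matrix units in block $s$. Each summand is a $\fb_{\mu_s}$-stable subspace of $\mathfrak{gl}_{\mu_s}(\C)$ containing $\fb_{\mu_s}$. Applying the characterization recalled before the lemma blockwise, $H_v^{(s)}=H(\hess_s)$ for a Hessenberg function $\hess_s$ on $\Lint{s}(\mu)$. Explicitly, for $b\in\Lint{s}(\mu)$, $\hess_s(b)$ is the largest $a\in\Lint{s}(\mu)$ with $v^{-1}(a)\le\hess(v^{-1}(b))$. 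One can also check directly that $\hess_s$ is weakly increasing and satisfies $\hess_s(b)\ge b$, using the same monotonicity of $v^{-1}$. This exhibits $H_v$ as a Hessenberg subspace of $\fl_\mu$ with respect to $\fb_\mu$, which is the claim.
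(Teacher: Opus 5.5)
Your proof is correct. The paper does not prove this lemma; it cites \cite{Precup2013}, and the argument behind that citation is the same idea in coordinate-free form. There, $v\in{}^\mu W$ gives $\dot v^{-1}\fb_\mu\dot v\subseteq\fb$, which is your monotonicity of $v^{-1}$ on blocks. Hence $\fb_\mu\subseteq\dot v\fb\dot v^{-1}\subseteq\dot vH\dot v^{-1}$ and $[\fb_\mu,\dot vH\dot v^{-1}]=\dot v[\dot v^{-1}\fb_\mu\dot v,H]\dot v^{-1}\subseteq\dot vH\dot v^{-1}$, and $\fl_\mu$ being a subalgebra handles the intersection. Your two bracket cases are exactly the matrix-unit check of $[\fb,H]\subseteq H$ transported by this conjugation. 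Your explicit $\hess_s$ agrees with \eqref{eqn.hess.v.on.j} and with the paper's worked example for $v=[4\,1\,2\,5\,3]$.
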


The Hessenberg space $H_v\subseteq \fl_\mu$ is uniquely determined by $\hess_v:=(\hess_v^{(1)}, \ldots, \hess_v^{(k)})$, where each $\hess_v^{(i)}:[\mu_i]\to [\mu_i]$ denotes the Hessenberg function corresponding to the Hessenberg space in $\mathfrak{gl}_{\mu_i}(\C)$ obtained from $H_v$ by projecting it to the diagonal $\mu_i\times \mu_i$ block in $\fl_\gm \cong \mathfrak{gl}_{\gm_1}(\C) \times \cdots \times \mathfrak{gl}_{\gm_k}(\C)$. Equivalently, we can view $\hess_v$ as a Hessenberg function on $[n]$ such that 
\begin{equation}\label{eqn.hess.v.on.i}
\hess_v(i)=i \, \text{ for all } \, i \in D_\gm
\end{equation}
and for each $j$ with $j\in \Lint{s}(\gm)$ for some $1\leq s \leq k = \len(\mu)$, 
\begin{equation}\label{eqn.hess.v.on.j}
\hess_v(j) = \mu_1+\cdots +\mu_{s-1} + \hess_v^{(s)}(j-\mu_1-\cdots - \mu_{s-1}). 
\end{equation}

\begin{example} If $\mu = (3,2)$, we have
\[
\fl_\mu = \begin{bmatrix} * & * & * & 0 & 0\\ * & * & * & 0 & 0\\ * & * & * & 0 & 0\\ 0 & 0 & 0 & * & *\\ 0 & 0 & 0 & * & *\\ \end{bmatrix}
\]
where $*$ indicates an arbitrary entry from $\C$.
Consider the Hessenberg space $H$ determined by the Hessenberg function $\hess=(2,4,4,5,5)$,
\[
H = \begin{bmatrix} * & * & * & * & *\\ * & * & * & * & *\\ 0 & * & * & * & *\\ 0 & * & * & * & *\\ 0 & 0 & 0 & * & *\\ \end{bmatrix}.
\]
If $v=[4\,1\,2\,5\,3]\in {^{(3,2)}}W$ then  
\[
H_v = \dot v H \dot v^{-1} \cap \fl_\mu = \begin{bmatrix} * & * & * & 0 & 0\\ * & * & * & 0 & 0\\ 0 & 0 & * & 0 & 0\\ 0 & 0 & 0 & * & *\\ 0 & 0 & 0 & 0 & *\\ \end{bmatrix}
\]
and thus $\hess_v = ((2,2,3),(1,2))$, or as a Hessenberg function on $[5]$ we have $\hess_v=(2,2,3,4,5)$.
\end{example}

We have just seen how a Hessenberg function $\hess$ and choice of $v \in {^\gm}W$ yields a sequence of Hessenberg functions $(h_v^{(1)}, \cdots ,h_v^{(k)})$ where $k =  \len(\mu)$. Given $\nilp \in \mathfrak{gl}_n(\C)$ a nilpotent element which lies in the Levi subalgebra $\fl_\gm$, we define $\bar{\nilp} := \nilp \vert_{\fl_\gm}$ to be the restriction of $\nilp$ to $\fl_\gm \cong \mathfrak{gl}_{\gm_1}(\C) \times \cdots \times \mathfrak{gl}_{\gm_k}(\C)$. Then the associated Hessenberg variety $\Hess(\bar{\nilp}, \hess_v)$ can be seen to satisfy an isomorphism 
\begin{equation}\label{eqn.nil.levi}
\begin{aligned}
\Hess(\bar{\nilp},\hess_v) \cong \Hess(\bar{\nilp}\vert_{\mathfrak{gl}_{\gm_1}(\C)}, \hess_v^{(1)})  \times \cdots & \times \Hess(\bar{\nilp} \vert_{\mathfrak{gl}_{\mu_k}(\C)}, \hess_v^{(k)}) \\ 
&\subset L_\gm/B_\gm \cong \prod_{i=1}^k \Fl(\C^{\gm_i}).
\end{aligned}
\end{equation}
In this sense, the Hessenberg variety $\Hess(\bar{\nilp},\hess_v)$ is a product of nilpotent Hessenberg varieties in smaller flag varieties. Moreover, for $z \in W_\mu \cong S_{\gm_1} \times S_{\gm_2} \times \cdots \times S_{\gm_k}$, we can write $z = z_1z_2\cdots z_k$ for $z_i \in S_{\mu_i}$ for $i \in [k]$, and we may consider $C_z := C_{z_1} \times \cdots \times C_{z_{k}}$ to be the product of the Schubert cells in each $\mathcal{F}\ell ags(\C^{\gm_i})$. Then we see that the Hessenberg--Schubert cell $C_z \cap \Hess(\bar{\nilp},\hess_v)$ is a product of Hessenberg--Schubert cells in the smaller flag varieties $\Fl(\C^{\gm_i})$. With this in mind, the next result gives an inductive formula for dimensions of Hessenberg--Schubert cells in terms of smaller Hessenberg--Schubert cells.

\begin{prop}[{\cite[Theorem 5.4, Corollary 5.5]{Precup2013}\label{prop.gendim}}] Let $\gm\vDash n$. Suppose $\mx\in \mathfrak{gl}_n(\C)$ has Jordan decomposition $\mx=\semi_\mu+\nilp$ where $\nilp$ is a nilpotent element of the Levi subalgebra $\fl_\mu := \mathfrak{z}(\semi_\mu)$. Let $\bar{\nilp}:= \nilp|_{\fl_\mu}$. Given $w\in S_n$, write $w=zu$ for $z\in W_\mu$ and $u\in  {^\mu W}$. Then the Hessenberg--Schubert cell $C_w\cap \Hess(\mx,\hess)$ is nonempty if and only if $C_z\cap \Hess (\bar\nilp, \hess_u)$ is nonempty. Moreover, if $C_w \cap \Hess(\mx,\hess)$ is nonempty, then it is isomorphic to affine space of dimension 
\[
\dim \left(C_w\cap \Hess (\mx,\hess)\right) =  \dim \left(C_z\cap \Hess (\bar\nilp, \hess_u)\right) + \ell_\hess(u).
\]
Moreover, the collection of nonempty Hessenberg--Schubert cells in $\Hess(\mx, \hess)$ is an affine paving of $\Hess(\mx, \hess)$.
\end{prop}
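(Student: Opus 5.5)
The plan is to prove the statement by fibering the Schubert cell $C_w$ over a Schubert cell of the Levi flag variety $L_\mu/B_\mu$. The Levi part should produce the term $\dim(C_z\cap\Hess(\bar\nilp,\hess_u))$. The fiber over each point should be an affine space of dimension $\ell_\hess(u)$. Throughout I use the standard description $gB\in\Hess(\mx,\hess)\iff \mathrm{Ad}(g^{-1})\mx\in H$, where $H=H(\hess)$ is the Hessenberg space from~\eqref{eqn.Hessenberg.space}.

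\textbf{Step 1 (coordinates on $C_w$).} Let $P_\mu=L_\mu U_P$ be the standard parabolic subgroup with Levi factor $L_\mu$. Write $U$ for the upper triangular unipotent group and $U^-$ for the lower triangular one, with $U_L=U\cap L_\mu$. Since $w=zu$ with $\ell(w)=\ell(z)+\ell(u)$ (Lemma~\ref{lem.cosets}), Lemma~\ref{lem.lengthadd} splits $\inv(w)$ into the inversions of $z$, all lying inside $L_\mu$, and the inversions of $u$, which correspond to roots of $\mathfrak u_P$. Accordingly I will write every point of $C_w$ uniquely as $n\,l\,\dot w B$ with $l\in U_L\cap \dot zU^-\dot z^{-1}$ and $n\in U_P\cap \dot wU^-\dot w^{-1}$. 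Here $l$ ranges over an affine space of dimension $\ell(z)$ and $n$ over one of dimension $\ell(u)$. This identifies $C_w\cong C_z\times \mathbb A^{\ell(u)}$, with $C_z\subset L_\mu/B_\mu$.

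\textbf{Step 2 (separating the Levi and radical conditions).} Put $Y:=\mathrm{Ad}(n^{-1})\mx$. Because $\nilp\in\fl_\mu$, and $\mathrm{ad}\,\semi_\mu$ acts invertibly on $\mathfrak u_P$ (its eigenvalues are the nonzero differences $c_i-c_j$), we have $Y\equiv\mx \pmod{\mathfrak u_P}$. Moreover, the $\mathfrak u_P$-component of $Y$ is an arbitrary element of the appropriate root space, obtained from the coordinates of $n$ by a triangular polynomial change of variables. The Hessenberg condition becomes $\mathrm{Ad}(\dot u^{-1}\dot z^{-1}l^{-1})Y\in H$. Equivalently, $\mathrm{Ad}(\dot z^{-1}l^{-1})Y\in \dot uH\dot u^{-1}$.

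I then project onto $\fl_\mu$. Since $\dot u H\dot u^{-1}\cap\fl_\mu=H_u$ by Lemma~\ref{lem.cosetH}, the $\fl_\mu$-component of the condition reads $\mathrm{Ad}(\dot z^{-1}l^{-1})\bar\nilp\in H_u$ (the semisimple part is central in $\fl_\mu$). This is exactly the statement that $l\dot zB_\mu\in C_z\cap\Hess(\bar\nilp,\hess_u)$. In particular, if this Levi cell is empty then so is $C_w\cap\Hess(\mx,\hess)$.

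\textbf{Step 3 (the fiber, and the main obstacle).} The remaining conditions are that certain $\mathfrak u_P$-root coordinates of $\mathrm{Ad}(\dot z^{-1}l^{-1})Y$ vanish, namely those outside $\dot uH\dot u^{-1}$. I expect this to be the hardest step: one must show these equations cut out, inside $\mathbb A^{\ell(u)}$, an affine space of dimension exactly $\ell_\hess(u)$, for every admissible $l$.

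The approach I would try first is to order the roots of $\mathfrak u_P$ by height and prove that the equations are triangular in the coordinates of $n$. The coordinate $n_\gamma$ should enter the $\gamma$-component linearly, with coefficient $(c_i-c_j)\ne0$ coming from $\mathrm{ad}\,\semi_\mu$, plus polynomial terms in coordinates of lower height and in $l$. This lets me solve successively for the constrained coordinates. The unconstrained ones are indexed by the inversions $(a,b)$ of $u$ with $b\le\hess(a)$, which are exactly the elements of $\hinv{\hess,u}$. The delicate point is verifying that each equation's leading variable is a distinct coordinate of $n$ lying in $U_P\cap\dot wU^-\dot w^{-1}$, and that the roots outside this set contribute nothing new. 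This is where the shortest-coset property of $u$ is used.

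Once the fiber is shown to be $\mathbb A^{\ell_\hess(u)}$, the cell $C_w\cap\Hess(\mx,\hess)$ is isomorphic to $(C_z\cap\Hess(\bar\nilp,\hess_u))\times\mathbb A^{\ell_\hess(u)}$. This product is an affine space by Proposition~\ref{prop.nildim} applied blockwise via~\eqref{eqn.nil.levi}, and it gives the stated nonemptiness criterion and dimension formula. Finally, the nonempty cells $C_w\cap\Hess(\mx,\hess)$ form an affine paving: order them by a refinement of Bruhat order on $S_n$, and the closedness of the unions of Schubert cells $C_w$ restricts to $\Hess(\mx,\hess)$.
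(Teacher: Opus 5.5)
Your Steps~1 and~2 are sound; the gap is Step~3. (The paper gives no proof of its own here; it quotes \cite{Precup2013}.) For Step~1: since $u\in{}^\mu W$, the inversion roots of $w$ lying in $\fl_\mu$ are exactly those of $z$, so $C_w\cong C_z\times\mathbb A^{\ell(u)}$. For Step~2: $\dot uH\dot u^{-1}$ is a sum of $\mathfrak h$-weight spaces, so the Hessenberg condition splits into the Levi condition, which involves only $l$, and conditions on the $\mathfrak u_P$-part.

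Step~3 carries all the content, and you only conjecture it. The mechanism you propose is also false as formulated, for two reasons.
\begin{enumerate}
\item \emph{Wrong pivot.} The pivot of the constraint indexed by a root $\gamma$ of $\mathfrak u_P$ with $u^{-1}\gamma<0$ cannot be $n_\gamma$, because $\gamma$ need not be inverted by $w$. For $\mu=(2,1)$, $z=s_1$, $u=s_2$, the only such $\gamma$ is $\epsilon_2-\epsilon_3$, while the only root of $\mathfrak u_P$ inverted by $w$ is $\epsilon_1-\epsilon_3$. Since $\mathrm{Ad}(\dot z^{-1})$ carries $\mathfrak g_{z\gamma}$ to $\mathfrak g_\gamma$, the candidate pivot is $n_{z\gamma}$.
\item \emph{No control of $\mathrm{ad}\,\nilp$.} This is the more serious problem. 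Diagonal and lower-triangular entries of $\nilp$ change the pivot coefficient and bring in coordinates of higher height. Take $\mu=(2,1)$, $\nilp=E_{11}+E_{12}-E_{21}-E_{22}$, $w=u=[3\,1\,2]$ (so $z=e$, $l=1$) and $n=1+aE_{13}+bE_{23}$. Then
\[
\mathrm{Ad}(n^{-1})\mx-\mx=\big((c_1-c_2+1)a+b\big)E_{13}+\big(-a+(c_1-c_2-1)b\big)E_{23}.
\]
For $\hess=(2,3,3)$ the Levi condition holds and the only constraint is the $E_{23}$-coefficient. In it, $n_\gamma=b$ has coefficient $c_1-c_2-1$ rather than $c_1-c_2$ (zero if $c_1=c_2+1$), and the higher coordinate $a$ appears.
\end{enumerate}
So for a general nilpotent $\nilp\in\fl_\mu$, which is what the statement allows, neither the triangularity of Step~3 nor the ``triangular change of variables'' of Step~2 holds.

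The missing ingredient is the hypothesis $\nilp\in\mathfrak n\cap\fl_\mu$. It holds for the $\lambda$-standard matrices of Definition~\ref{defn.lambda.compatible}, which is the only way the paper uses the proposition. You already assume it tacitly when you invoke Proposition~\ref{prop.nildim} blockwise, since for an arbitrary nilpotent $\bar\nilp$ the Levi cells need not be affine. With this hypothesis the argument goes through:
\begin{enumerate}
\item Write $n=\exp(Z)$ with $Z$ supported on the roots of $\mathfrak u_P$ inverted by $w$. The constraint for $\gamma$ is then the $z\gamma$-component of $\mathrm{Ad}(l^{-1})\big(\mathrm{Ad}(\exp(-Z))\mx-\mx\big)$.
\item In that expression, $\mathrm{Ad}(l^{-1})$ is the identity plus height-raising terms, $\mathrm{ad}\,\nilp$ strictly raises height, and the terms of degree at least two in $Z$ involve only $Z_\beta$ of smaller height. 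Also $(z\gamma)(\semi_\mu)=\gamma(\semi_\mu)=c_i-c_j$ because $z\in W_\mu$.
\item Hence the constraint reads $(c_i-c_j)Z_{z\gamma}+P_\gamma=0$, where $P_\gamma$ is a polynomial in $l$ and in the $Z_\beta$ with height less than that of $z\gamma$.
\item Eliminating the pivots in increasing height gives a global isomorphism $C_w\cap\Hess(\mx,\hess)\cong\big(C_z\cap\Hess(\bar\nilp,\hess_u)\big)\times\mathbb A^{\ell_\hess(u)}$, with free coordinates indexed by $\hinv{\hess,u}$. Knowing only that each fiber is an affine space, as your last paragraph has it, would not give this product.
\end{enumerate}

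If you only need the dimension formula, which is all Section~\ref{sec.general case} uses, there is a route valid for every nilpotent $\nilp$. Substitute $n'=\dot z^{-1}l^{-1}nl\dot z$ and $\mx'=\mathrm{Ad}(\dot z^{-1}l^{-1})\mx$. The map $m\mapsto\mathrm{Ad}(m^{-1})\mx'-\mx'$ is an isomorphism $U_P\to\mathfrak u_P$. So the set $M\subseteq U_P$ cut out by the radical condition is an affine space of dimension $\dim(\mathfrak u_P\cap\dot uH\dot u^{-1})$, stable under right multiplication by $V=U_P\cap\dot uU\dot u^{-1}$. The fiber is $M\cap(U_P\cap\dot uU^-\dot u^{-1})$, and $M$ is its product with $V$, so the fiber has dimension $\ell_\hess(u)$.
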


In what follows, we will use Proposition~\ref{prop.gendim} in order to show that any $\Hess(\mx, \hess)$ with $\mx$ in the sheet $\fg_\lambda$ has the same dimension as the semisimple Hessenberg variety in the same sheet. This is the content of Proposition~\ref{prop.gendim.equal.ssdim} below. 

We begin with some preliminaries. Let $\tau,\mu\vDash n$. We say that $\tau$ is a \emph{refinement of $\mu$} if $D_\gm\subseteq D_\tau$. It then follows from the definitions that $W_\tau$ is a subgroup of $W_\mu$ and ${^\mu}W \subseteq {^\tau}W$.

\begin{lemma} \label{lemma.refinement} Let $\tau,\mu\vDash n$ such that $\tau$ is a refinement of $\mu$. For $v\in {^\tau}W$, let $v=xu$ where $x\in W_\mu$ and $u\in {^\mu}W$. Then
\[
\ell_\hess(v) = \ell_\hess(u) + \ell_{\hess_u}(x).
\]
\end{lemma}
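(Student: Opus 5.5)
Since $v=xu$ with $x\in W_\mu$ and $u\in{^\mu}W$ is the coset decomposition of Lemma~\ref{lem.cosets}, it is length additive. Lemma~\ref{lem.lengthadd} then splits the inversions as
\[
\inv(v)=\inv(u)\sqcup u^{-1}\inv(x).
\]
The plan is to intersect this decomposition with the $\hess$-condition and identify each piece separately. Concretely, $\hinv{\hess,v}$ is the set of $(i,j)\in\inv(v)$ with $j\le \hess(i)$. The pairs lying in $\inv(u)$ and satisfying $j\le\hess(i)$ form exactly $\hinv{\hess,u}$, which has cardinality $\ell_\hess(u)$. So it remains to show that the pairs in $u^{-1}\inv(x)$ satisfying the $\hess$-condition are in bijection with $\hinv{\hess_u,x}$.

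Take $(c,d)\in\inv(x)$, so $c<d$, $x(c)>x(d)$, and $c,d\in\Lint{s}(\mu)$ for a common $s$. Its image in $\inv(v)$ is $(u^{-1}(c),u^{-1}(d))$. Because $u\in{^\mu}W$, the values in each block $\Lint{s}(\mu)$ appear in increasing order in the one-line notation of $u$, so $u^{-1}(c)<u^{-1}(d)$ and the order is preserved. The $\hess$-condition for this pair reads $u^{-1}(d)\le\hess(u^{-1}(c))$. In matrix terms, this says that the $(u^{-1}(d),u^{-1}(c))$ entry position lies in $H$. I will check that this is equivalent to the $(d,c)$ entry position lying in $\dot u H\dot u^{-1}$, using the convention that $\dot u$ sends $e_i$ to $e_{u(i)}$.

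Since $c,d$ lie in the same block $\Lint{s}(\mu)$, the position $(d,c)$ lies in $\fl_\mu$. Hence membership in $\dot uH\dot u^{-1}$ is the same as membership in $H_u=\dot uH\dot u^{-1}\cap\fl_\mu$. By the description \eqref{eqn.hess.v.on.j} of $\hess_u$, this is exactly the condition $d\le\hess_u(c)$. Therefore $(c,d)\mapsto(u^{-1}(c),u^{-1}(d))$ is a bijection from $\hinv{\hess_u,x}$ onto $\hinv{\hess,v}\cap u^{-1}\inv(x)$. Adding the two pieces gives $\ell_\hess(v)=\ell_\hess(u)+\ell_{\hess_u}(x)$.

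The main obstacle is bookkeeping, not any real difficulty. I have to pin down the index convention relating $\dot u H\dot u^{-1}$ to the permuted positions, namely that $(\dot uE_{ab}\dot u^{-1})=E_{u(a)u(b)}$. I also have to confirm that $\hess_u$, defined through $H_u$, really encodes the condition $d\le \hess_u(c)$ for $c<d$ in the same block, including the convention \eqref{eqn.hess.v.on.i} at the block endpoints, where no pairs arise anyway. The refinement hypothesis on $\tau$ is used only to guarantee that $v$ makes sense; the identity in fact holds for any $v\in S_n$.
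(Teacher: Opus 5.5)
Your proof is correct and follows the paper's argument essentially step for step: length-additivity of $v=xu$, the splitting $\inv(v)=\inv(u)\sqcup u^{-1}\inv(x)$, and the identification of the condition $u^{-1}(d)\le\hess(u^{-1}(c))$ with $E_{d,c}\in H_u$, via conjugation by $\dot u$ and the fact that $E_{d,c}\in\fl_\mu$. Your side remark is also right: neither your argument nor the paper's uses $v\in{^\tau}W$, so the identity holds for every $v\in S_n$.
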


\begin{proof} We know from Lemma~\ref{lem.cosets} that the decomposition $v=xu$ is unique and length-additive. Hence from Lemma~\ref{lem.lengthadd} we may conclude 
\begin{equation}\label{eq: inv relation}
\inv(v) = \inv(u) \,  \sqcup \, u^{-1}(\inv(x)).
\end{equation}
Thus, $\hinv{\hess,u} \subseteq \hinv{\hess, v}$. Observe from~\eqref{eq: inv relation} that   
\begin{equation}\label{eq: complement}
\hinv{\hess, v}\setminus \hinv{\hess, u} = \{ (u^{-1}(i), u^{-1}(j)) : (i,j)\in \inv(x) \, \text{ and } \, u^{-1}(j)\leq \hess(u^{-1}(i)) \}.
\end{equation}
Thus, to complete the proof, it suffices to prove that there is a bijection between $\hinv{\hess_u,x}$, which counts $\ell_{\hess_u}(x)$, and the RHS of~\eqref{eq: complement}. 
Let $E_{a,b}$ denote the elementary matrix with unique nonzero entry equal to $1$ in position $(a,b)$.
Because $x\in W_\mu$, if $(i,j)\in \inv(x)$ then  
\[
\mu_1+\cdots + \mu_{s-1}+1\leq i <j \leq \mu_1 + \cdots + \mu_s
\] 
for some $s$. This implies, in particular, that $u^{-1}(i)<u^{-1}(j)$ because $u\in {^\mu}W$. We also have $E_{j,i} \in \fl_\mu$. Thus, for $(i,j)$ that satisfies $(i,j) \in \inv(x)$, we have
\begin{align*}
(i,j) \in \hinv{\hess_u,x} &\Leftrightarrow E_{j,i} \in H_u = \dot u H \dot u^{-1} \cap \fl_\mu \\
&\Leftrightarrow E_{j,i}\in \dot u H \dot u^{-1} \quad \text{(since $E_{j,i}\in \fl_\mu$)}  \\
&\Leftrightarrow E_{u^{-1}(j), u^{-1}(i)} \in H \\
&\Leftrightarrow u^{-1}(j) \leq \hess(u^{-1}(i)) \quad \text{(since $u^{-1}(i)<u^{-1}(j)$)}.
\end{align*}
Thus, $(i,j)\mapsto (u^{-1}(i), u^{-1}(j))$ is the required bijection $ \hinv{\hess_u, x} \to \hinv{\hess, v}\setminus \hinv{\hess, u}$.
\end{proof}

In what follows, we will consider matrices of a particular form. 

\begin{definition}\label{defn.lambda.compatible}
Let $\lambda$ be a partition of $n$. Let $\mx$ be an $n\times n$ matrix with $\mx \in \fg_\lambda$, so in particular, $\lambda_\mx = \lambda$. We say that $\mx$ is \emph{$\lambda$-standard} if $\mx = \semi_\gm +\nilp$ where: 
\begin{itemize}
\item $\semi_\mu$ is a diagonal matrix with distinct eigenvalues $c_1,c_2, \ldots, c_{k}$ where $k= \len(\mu)$ and the eigenvalue $c_i$ has algebraic multiplicity $\gm_i$, so that $L_\gm = Z_G(\semi_\mu) \cong GL_{\mu_1}(\C)\times GL_{\mu_2}(\C)\times \cdots \times GL_{\mu_k}(\C)$, 
\item the sizes of the Jordan blocks of $\mx$ corresponding to the eigenvalue $c_i$ determine a partition $\lambda^{(i)}$ of size $\mu_i = |\lambda^{(i)}|$ and $\lambda_i = \sum_{s=1}^k \lambda^{(s)}_i$, and 
\item $\nilp$ is nilpotent and $N \in L_\mu \cong GL_{\mu_1}(\C) \times \cdots \times GL_{\mu_k}(\C)$ and if $\bar\nilp_i$ denotes the projection of $\nilp$ to the $\mathfrak{gl}_{\gm_i}(\C)$ factor in $\fl_\gm \subset \mathfrak{gl}_n(\C)$, i.e., $\bar\nilp_i := \nilp \vert_{\mathfrak{gl}_{\gm_i}(\C)}$, then $\bar\nilp_i$ is equal to $\nilp_{\lambda^{(i)}}$, the nilpotent matrix associated to the base filling of the tableau of shape $\lambda^{(i)}$ as defined in Definition~\ref{def.base.nilp}. 
\end{itemize}
\end{definition}

Any matrix $\mx \in \fg_\lambda$ can be conjugated into $\lambda$-standard form by choosing an appropriate basis for each Jordan block.
We now define a new semisimple matrix, denoted $\semi_\tau$ from the data of a partition $\lambda$ and a $\lambda$-standard matrix.  

\begin{construction}\label{constr.semi.tau} Let $\lambda$ be a partition of $n$ and $\mx = \semi_\gm + \nilp$ a $\lambda$-standard matrix; we fix notation as in Definition~\ref{defn.lambda.compatible} above.
For each $i$ with $i\in[k]$, let $\tau^{(i)} = (\lambda^{(i)})^t$ be the transpose partition of $\lambda^{(i)}$. By construction, $\tau^{(i)}\vdash \mu_i$ is a partition of size $\mu_i$.   Let $\semi_{\tau^{(i)}}$ be a diagonal $\mu_i\times \mu_i$ matrix, with $\len(\tau^{(i)}) = \lambda^{(i)}_1$ distinct eigenvalues with multiplicities $\tau_1^{(i)},\cdots,\tau^{(i)}_{\len(\tau^{(i)})}$. We also assume that the eigenvalues of $\semi_{\tau^{(i)}}$ are pairwise distinct from the eigenvalues occurring in $\semi_{\tau^{(j)}}$ for all $j \neq i$. 

Now form $\tau\vDash n$ by concatenating the partitions $\tau^{(1)}, \tau^{(2)}, \ldots, \tau^{(k)}$. By construction, $\tau$ is a refinement of the composition $\mu$. Indeed, $\tau$ is obtained by replacing each part $\mu_i$ of $\mu$ with the partition $\tau^{(i)}$. 

Finally, we set $\semi_\tau$ equal to the $n\times n$ diagonal matrix such that the projection $\semi_\tau\vert_{\mathfrak{gl}_{\gm_i}(\C)}$ of $\semi_\tau$ onto the $\mathfrak{gl}_{\gm_i}(\C)$ factor in $\fl_\gm \subset \mathfrak{gl}_n(\C)$ is equal to $\semi_{\tau^{(i)}}$. 
\end{construction}

\begin{example}\label{example.semi.tau}
Let $n=9$ and let $\mx = \semi_\gm + \nilp$ where $\gm=(5,4)$ so $k=2=\len(\gm)$. Suppose $c_1=0$ and $c_2=1$ and $\lambda^{(1)} = (3,2)$ is a partition of $5 = \gm_1$, so there are $2$ Jordan blocks associated to the eigenvalue $c_1$, and $\lambda^{(2)} = (2,1,1)$ is a partition of $4=\gm_2$, so there are $3$ Jordan blocks associated to $c_2$. The base fillings of $\lambda^{(1)}$ and $\lambda^{(2)}$ are 
$$
\ytableausetup{centertableaux} \begin{ytableau}2 & 4 & 5\\ 1 & 3\\ \end{ytableau}
\qquad \qquad 
\begin{ytableau} 3 & 4 \\
2 \\
1 \\
\end{ytableau} 
$$
respectively, so 
$$
\bar\nilp_1 = \begin{bmatrix} 0 & 0 & 1 & 0 & 0 \\ 0 & 0 &0 & 1 & 0 \\
0 & 0 & 0 & 0 & 0 \\
0 & 0 & 0 & 0 & 1 \\
0 & 0 & 0 & 0 & 0 
\end{bmatrix}, \quad 
\bar\nilp_2 = \begin{bmatrix} 
0 & 0 & 0 & 0 \\
0 & 0 & 0 & 0 \\
0 & 0 & 0 & 1 \\
0 & 0 & 0 & 0 
\end{bmatrix} 
$$
and the semisimple matrix is 
$$
\semi_\gm = \textup{diag}(0,0,0,0,0,1,1,1,1).
$$
Notice that $\lambda_\mx  = (5,3,1)$ in this case.
We have $\tau^{(1)}=(\lambda^{(1)})^t = (2,2,1)$ and $\tau^{(2)} =(\lambda^{(2)})^t = (3,1)$. We choose $\semi_{\tau^{(1)}}$ to have $\lambda^{(1)}_1=3$ distinct eigenvalues of multiplicities $2,2,1$ respectively and $\semi_{\tau^{(2)}}$ to have $\lambda^{(2)}_1=2$ eigenvalues of multiplicities $3,1$ respectively; more concretely we may set 
$$
\semi_{\tau^{(1)}} = \diag(0,0,1,1,2) \;  \text{ and } \;
\semi_{\tau^{(2)}} = \diag(3,3,3,4)
$$
so $\semi_\tau = \diag(0,0,1,1,2,3,3,3,4)$. 
\end{example}

\begin{lemma}\label{lemma.sheet.same}
Following the notation and assumptions above, $\semi_\tau$ is in the same sheet as $\mx$, i.e., $\lambda_{\semi_\tau} =\lambda_\mx = \lambda$. 
\end{lemma}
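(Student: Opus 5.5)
We compute $\lambda_{\semi_\tau}$ directly from Definition~\ref{definition.sheet}. By Lemma~\ref{lemma: orbits in g lambda}(2), a semisimple matrix lies in $\fg_{\nu}$ exactly when $\nu$ is the transpose of the partition obtained by sorting its eigenvalue multiplicities. Equivalently, for a semisimple matrix every Jordan block has size $1$. So each eigenvalue $c$ of multiplicity $m_c$ contributes the partition $(1,1,\ldots,1)$ with $m_c$ parts. Then $\lambda_{\semi_\tau,j}$ is the number of eigenvalues whose multiplicity is at least $j$.

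First we list the eigenvalues of $\semi_\tau$ with their multiplicities. By Construction~\ref{constr.semi.tau}, these are the disjoint union over $i\in[k]$ of the eigenvalues of $\semi_{\tau^{(i)}}$. The disjointness holds because eigenvalues are chosen pairwise distinct across different $i$. The block $\semi_{\tau^{(i)}}$ has $\lambda^{(i)}_1$ distinct eigenvalues, with multiplicities $\tau^{(i)}_1,\ldots,\tau^{(i)}_{\lambda^{(i)}_1}$, where $\tau^{(i)}=(\lambda^{(i)})^t$. Hence
\[
\lambda_{\semi_\tau,j}=\sum_{i=1}^k \#\{s : \tau^{(i)}_s\geq j\}=\sum_{i=1}^k \big((\tau^{(i)})^t\big)_j=\sum_{i=1}^k \lambda^{(i)}_j .
\]
The middle equality is the standard fact that the number of parts of a partition that are $\geq j$ equals the $j$-th part of its transpose. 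The last equality uses $((\lambda^{(i)})^t)^t=\lambda^{(i)}$.

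Finally, by the definition of $\lambda$-standard (Definition~\ref{defn.lambda.compatible}), $\lambda_j=\sum_{i=1}^k\lambda^{(i)}_j=\lambda_{\mx,j}$ for each $j$. This gives $\lambda_{\semi_\tau}=\lambda_\mx=\lambda$. As a check on lengths, $\ell(\lambda_{\semi_\tau})=\max_i \lambda^{(i)}_1$, which is also the maximal number of parts of $\tau^{(i)}$ at index $j=1$, so the two partitions have the same length.

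There is no real obstacle here; the only point needing care is the disjointness of the eigenvalues across blocks. If two blocks shared an eigenvalue, their multiplicities would merge and change $\lambda_{\semi_\tau}$. Construction~\ref{constr.semi.tau} explicitly rules this out.
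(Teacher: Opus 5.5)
Your proof is correct and is essentially the paper's: both compute the $j$-th part as $\sum_{i}\#\{s:\tau^{(i)}_s\geq j\}=\sum_i\lambda^{(i)}_j=\lambda_{\mx,j}$, and the paper simply packages this as $\mathrm{sort}(\tau)^t=\lambda_\mx$. Your closing length check is misstated: $\len(\lambda_{\semi_\tau})$ is the largest eigenvalue multiplicity $\max_i\tau^{(i)}_1=\max_i\len(\lambda^{(i)})$, not $\max_i\lambda^{(i)}_1$ (take $k=1$, $\lambda^{(1)}=(3)$, where $\lambda_{\semi_\tau}=(3)$ has length $1$); this is harmless, since equality of the $j$-th parts for every $j\geq 1$ already gives $\lambda_{\semi_\tau}=\lambda_\mx$.
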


\begin{proof} 
By definition of the matrix $\semi_\tau$, this follows once we show $\mathrm{sort}(\tau) = \lambda_\mx^t$ where $\mathrm{sort}(\tau)$ denote the partition obtained from the composition $\tau$ by reordering the parts to be weakly decreasing. Since the transpose operation on partitions is an involution, it suffices to show $\mathrm{sort}(\tau)^t = \lambda_{\mx}$. By definition,
\begin{align*}
\mathrm{sort}(\tau)_j^t &= \#\{p : \tau_p \geq j \}  = \sum_{i=1}^k \left( \#\{p: \tau_p^{(i)} \geq j\}\right) = \sum_{i=1}^k \lambda_j^{(i)} = \lambda_{X,j}.
\end{align*}
Thus, $\mathrm{sort}(\tau)^t=\lambda_{\mx}$ and we conclude $\semi_\tau \in \fg_{\lambda_\mx}$, as desired. 
\end{proof} 

\begin{example} 
Continuing Example~\ref{example.semi.tau} we see that $\semi_\tau$ has $5$ distinct eigenvalues, of multiplicities $2,2,1,3,1$, and since $\semi_\tau$ is semisimple, all Jordan blocks are size $1$. Thus the $5$ associated partitions are $(1,1),(1,1),(1),(1,1,1),(1)$ respectively, or 
\begin{equation}\label{eqn.refinement.tau}
\begin{ytableau}
 \\
 \\
\end{ytableau}
\quad
\begin{ytableau}
 \\
 \\
\end{ytableau}
\quad
\begin{ytableau}
 \\
\end{ytableau}
\quad \textup{ and } \quad 
\begin{ytableau}
 \\
 \\
 \\
\end{ytableau}
\quad
\begin{ytableau}
 \\
\end{ytableau}
\end{equation}
which is obtained by splitting $\lambda^{(1)} =(3,2)$ and $\lambda^{(2)} = (2,1,1)$ from Example~\ref{example.semi.tau} into separate columns. To compute the $j$-th entry of $\lambda_{\semi_\tau}$, we take the sum of the number of boxes in the $j$-th row of all the partitions. As we just observed, the tableau in~\eqref{eqn.refinement.tau} are just the columns appearing in the $\lambda^{(i)}$, we can see that $\lambda_\mx = \lambda_{\semi_\tau}=(5,3,1)$, as claimed in Lemma~\ref{lemma.sheet.same}.
\end{example}

\begin{prop}\label{prop.gendim.equal.ssdim} Let $\lambda\vdash n$. Suppose that $\mx =  \semi_\mu+ \nilp \in \fg_\lambda$ is $\lambda$-standard in the sense of Definition~\ref{defn.lambda.compatible}. Let $\semi_\tau \in \fg_\lambda$ denote the semisimple matrix defined in Construction~\ref{constr.semi.tau} with respect to $\mx, \semi_\gm, \nilp$. Then
\[
\dim \left(\Hess(\mx,\hess)\right) = \dim \left( \Hess(\semi_\tau,\hess)\right).
\]
\end{prop}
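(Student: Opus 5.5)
The plan is to compare the two affine pavings coset by coset with respect to the parabolic $W_\mu$, and to reduce each coset to the Levi $\fl_\mu$. Inside $\fl_\mu$, Theorem~\ref{thm.ss-nilp} applies block by block.

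\textbf{Cells of $\Hess(\mx,\hess)$.} By Proposition~\ref{prop.gendim}, write each $w\in S_n$ as $w=zu$ with $z\in W_\mu$ and $u\in{^\mu}W$. The nonempty cells satisfy
\[
\dim(C_w\cap\Hess(\mx,\hess))=\dim(C_z\cap \Hess(\bar\nilp,\hess_u))+\ell_\hess(u).
\]
The cells are indexed by pairs $(z,u)$ and form a paving. Hence
\[
\dim\Hess(\mx,\hess)=\max_{u\in{^\mu}W}\bigl(\ell_\hess(u)+\dim\Hess(\bar\nilp,\hess_u)\bigr).
\]
Here I use that $\Hess(\bar\nilp,\hess_u)$ is nonempty; it is a Hessenberg variety with $\hess_u(i)\ge i$, so it contains a flag stabilized by $\bar\nilp$. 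I also use that its own paving is $\{C_z\cap\Hess(\bar\nilp,\hess_u)\}_{z\in W_\mu}$. By \eqref{eqn.nil.levi}, $\dim\Hess(\bar\nilp,\hess_u)=\sum_{i=1}^k\dim\Hess(\nilp_{\lambda^{(i)}},\hess_u^{(i)})$.

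\textbf{Cells of $\Hess(\semi_\tau,\hess)$.} I will regroup the cells of Proposition~\ref{prop.ssdim} by the same $u$. Take $w=zu$ as above and factor $z=yx$ with $y\in W_\tau$ and $x$ the minimal representative of $W_\tau x$ inside $W_\mu$. Such an $x$ has inverse descents only in $D_\tau$, so $x\in{^\tau}W$. I then check that $v:=xu\in{^\tau}W$: within each interval $\Lint{s}(\tau)\subseteq\Lint{r}(\mu)$, $x$ keeps entries in order and $u$ keeps all of $\Lint{r}(\mu)$ in order. By uniqueness in Lemma~\ref{lem.cosets}, $w=yv$ is the $\tau$-coset decomposition. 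Lemma~\ref{lemma.refinement} then gives
\[
\ell(y)+\ell_\hess(v)=\ell_\hess(u)+\bigl(\ell(y)+\ell_{\hess_u}(x)\bigr).
\]
The bracket should be, by Proposition~\ref{prop.ssdim} applied in $L_\mu/B_\mu\cong\prod\Fl(\C^{\mu_i})$, the dimension of $C_z\cap\Hess(\semi_\tau|_{\fl_\mu},\hess_u)$. This follows from the product decomposition into the blocks $\semi_{\tau^{(i)}}$ with Hessenberg functions $\hess_u^{(i)}$, since lengths and $\hess_u$-lengths are additive over the factors of $W_\mu$. Maximizing over $z$ for fixed $u$ gives
\[
\dim\Hess(\semi_\tau,\hess)=\max_{u\in{^\mu}W}\Bigl(\ell_\hess(u)+\sum_{i=1}^k\dim\Hess(\semi_{\tau^{(i)}},\hess_u^{(i)})\Bigr).
\]

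\textbf{Conclusion and main obstacle.} Since $\tau^{(i)}=(\lambda^{(i)})^t$, Theorem~\ref{thm.ss-nilp} applied in $\mathfrak{gl}_{\mu_i}(\C)$ with Hessenberg function $\hess_u^{(i)}$ gives $\dim\Hess(\nilp_{\lambda^{(i)}},\hess_u^{(i)})=\dim\Hess(\semi_{\tau^{(i)}},\hess_u^{(i)})$ for every $u$ and $i$. So the two maxima agree term by term.

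The step I expect to need the most care is the bookkeeping in the second paragraph. One must verify that $z\mapsto(y,x)$ together with $u$ matches exactly the $\tau$-coset decomposition of $w$. One must also verify that Proposition~\ref{prop.ssdim}, stated in $\Fl(\C^n)$, transfers to the product of smaller flag varieties with the functions $\hess_u^{(i)}$, including that $\hess_u$ restricted to a block gives the correct $\hess_u^{(i)}$-inversions via \eqref{eqn.hess.v.on.j}. The rest is a formal max-over-cosets argument.
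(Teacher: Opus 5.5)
Your proposal is correct and uses the same ingredients as the paper's proof: the decompositions $w=zu$ and $z=yx$, Lemma~\ref{lemma.refinement}, the blockwise product structure over $\fl_\mu$, and the Section~3 semisimple/nilpotent comparison applied to each block with Hessenberg function $\hess_u^{(i)}$. The only difference is organizational. You write both dimensions as a maximum over $u\in{^\mu}W$ of matching blockwise sums and then cite Theorem~\ref{thm.ss-nilp} for each block. The paper instead proves the two inequalities separately, transporting a maximal cell from one side to the other via Proposition~\ref{prop.steps}, which amounts to the same thing.
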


\begin{proof} 
To prove the equality, we will prove the inequality in both directions. We first show that $\dim \left(\Hess(\semi_\tau,\hess)\right)\leq \dim\left( \Hess(\mx,\hess)\right)$.

Suppose $w\in S_n$ and write $w=yv$ for $y\in W_\tau$ and $v\in {^\tau}W$. Since $\tau$ is a refinement of $\mu$, we may further write $v=xu$ for $x\in W_\mu$ and $u\in {^\mu}W$. Recall that from $u\in {^\mu}W$ we obtain a sequence of Hessenberg functions $\hess_u = (\hess_u^{(1)}, \ldots, \hess_u^{(k)})$ where $k = \len(\mu)$, with each $\hess_u^{(i)}: [\mu_i] \to [\mu_i]$ being a Hessenberg function on $[\mu_i]$.
By Proposition~\ref{prop.ssdim} and Lemma~\ref{lemma.refinement} we have 
\begin{eqnarray}\label{eqn.semidimtau}
\dim  \left(C_w\cap \Hess(\semi_\tau,\hess)\right)  = \ell(y)+ \ell_\hess(v) =  \ell(y)+ \ell_\hess(u) + \ell_{\hess_u}(x). 
\end{eqnarray}
Since $x\in W_\mu \simeq S_{\mu_1}\times S_{\mu_2}\times \cdots S_{\mu_k}$ we may write $x=x_1x_2\ldots x_k$ uniquely,  where for each $i\in[k]$,
\[
x_i \in W_{\Lint{i}(\gm)}:=\left< s_j : j\in \Lint{i}(\gm) \right> \simeq S_{\mu_i}.
\]
Then 
\[
\ell_{\hess_u}(x) = \ell_{\hess_u^{(1)}}(x_1)+ \ell_{\hess_u^{(2)}}(x_2)+\cdots + \ell_{\hess_u^{(k)}}(x_k).
\]
Similarly, since $y\in W_\tau \subseteq W_\mu$ we can write $y = y_1y_2\cdots y_k$ uniquely,  where for each $i\in[k]$, 
\[
y_i\in W_{\Lint{i}(\gm)}:=\left< s_j : j\in \Lint{i}(\gm) \right> \simeq S_{\mu_i}.
\] 
We have $\ell(y) = \ell(y_1)+\ell(y_2)+\cdots + \ell(y_k)$.   Here $y_i$ and $x_j$ commute when $i\neq j$ so:
\[
yx = y_1 y_2 \cdots y_k x_1x_2 \cdots x_k  = (y_1x_1)(y_2x_2)\cdots (y_kx_k)
\]
and
\begin{equation}\label{eqn.ell.plus.ell.hu}
\ell(y)+\ell_{\hess_u}(x) = \sum_{i=1}^k \left( \ell(y_i)+\ell_{\hess_u^{(i)}}(x_i)\right).
\end{equation}

For the next step, recall that the decomposition $v=xu$ is length-additive and $v\in {^\tau}W$. Applying Lemma~\ref{lem.lengthadd} to the length additive decomposition $v^{-1} = u^{-1}x^{-1}$, we get that $x^{-1}(i)>x^{-1}(i+1)$ implies $v^{-1}(i)>v^{-1}(i+1)$, and since $v\in {^\tau}W$, we have $i\in D_\tau$. In particular, we get $x\in {^\tau}W$. This implies that, for each $i$, $y_ix_i$ is a coset decomposition with $y_i \in W_\tau$ and $x_i \in {^\tau}W$. Thus we may apply Proposition~\ref{prop.ssdim} to $y_ix_i$ and obtain 
\begin{align*}
\ell(y_i)+\ell_{\hess_u^{(i)}}(x_i) &= \dim \left(C_{y_ix_i}\cap \Hess(\semi_{\tau^{(i)}}, \hess_u^{(i)}) \right).
\end{align*}
Combining this with~\eqref{eqn.semidimtau} and~\eqref{eqn.ell.plus.ell.hu} yields 
\begin{equation}\label{eqn.dim.Cw.as.sum}
\dim  \left(C_w\cap \Hess(\semi_\tau,\hess)\right)  =  \ell_\hess(u)+ \sum_{i=1}^k   \dim \left( C_{y_ix_i}\cap \Hess(\semi_{\tau^{(i)}}, \hess_u^{(i)}) \right).
\end{equation}

Now suppose $w\in S_n$ such that $\dim  \left(C_w\cap \Hess(\semi_\tau,\hess)\right)$ is maximal as $w$ varies in $S_n$. This implies that, for each $i$, $\dim \left(C_{y_ix_i}\cap \Hess(\semi_{\tau^{(i)}}, \hess_u^{(i)})\right)$ is maximal among all values $\dim \left(C_{z_i} \cap \Hess(\semi_{\tau^{(i)}}, \hess_u^{(i)}\right)$ as $z_i$ varies in $W_{\Lint{i}(\gm)}$. Indeed, if that were not the case, then we could replace some $y_ix_i$ in the factorization of $w$ with some $z_i\in W_{\Lint{i}(\gm)}$ so that 
\[
\dim \left(C_{z_i} \cap \Hess(\semi_{\tau^{(i)}}, \hess_u^{(i)})\right) > \dim \left( C_{y_ix_i}\cap \Hess(\semi_{\tau^{(i)}}, \hess_u^{(i)})\right).
\]
This, in turn, would yield a new permutation $w' = z_1z_2\cdots z_k u \in S_n$ where at least one $z_i$ is not equal to $y_ix_i$, so $w' \neq w$, and $w'$ satisfies  
\[
\dim \left(C_{w'}\cap \Hess(\semi_\tau,\hess)\right) > \dim \left(C_w \cap \Hess(\semi_\tau,\hess)\right),
\]
contradicting the fact that $w$ was selected so that $\dim \left(C_w \cap \Hess(\semi_\tau,\hess)\right)$ is maximal. 

Since $\dim \left(C_{y_ix_i}\cap \Hess(\semi_{\tau^{(i)}}, \hess_u^{(i)})\right)$ is maximal in the above sense for all $i$ and since $\tau^{(i)} = (\lambda^{(i)})^t$, 
Proposition~\ref{prop.steps} implies that there exists $z_i\in W_{\Lint{i}(\gm)}$ such that $C_{z_i} \cap \Hess(\nilp_{\lambda^{(i)}},\hess_u^{(i)})$ is non-empty and
\begin{equation}\label{eqn.induction.step}
\dim \left( C_{y_ix_i}\cap \Hess(\semi_{\tau^{(i)}}, \hess_u^{(i)})  \right)= \dim\left( C_{z_i}\cap \Hess(\nilp_{\lambda^{(i)}}, \hess_u^{(i)}) \right)
\end{equation}
for $1\leq i \leq k$.
Let  $z= z_1z_2\cdots z_k \in W_\gm$. Using the isomorphism of~\eqref{eqn.nil.levi}, we have 
\begin{equation} \label{eqn.nilp.cell.prod}
C_z\cap \Hess(\bar\nilp , \hess_u ) \simeq \prod_{i=1}^k  C_{z_i}\cap \Hess(\nilp_{\lambda^{(i)}} , \hess_u^{(i)} ) 
\end{equation}
and $C_z\cap \Hess(\bar\nilp , \hess_u )\neq \emptyset$ since each $C_{z_i} \cap \Hess(\nilp_{\lambda^{(i)}},\hess_u^{(i)})\neq \emptyset$.

By assumption $C_w \cap \Hess(\semi_\tau,\hess)$ has maximal dimension, so we obtain 
\begin{align*}
\dim \left(\Hess(\semi_\tau,\hess)\right)
&=\dim \left( C_w\cap \Hess(\semi_\tau,\hess) \right) \\
&= \ell_\hess(u) + \sum_{i=1}^k  \dim \left( C_{y_ix_i}\cap \Hess(\semi_{\tau^{(i)}}, \hess_u^{(i)}) \right) \textup{ by~\eqref{eqn.dim.Cw.as.sum}}\\
&= \ell_\hess(u) + \sum_{i=1}^k  \dim \left( C_{z_i} \cap \Hess (\nilp_{\lambda^{(i)}}, \hess_u^{(i)}) \ \right)  \textup{ by~\eqref{eqn.induction.step}}\\
&= \ell_\hess(u) + \dim \left( C_z\cap \Hess(\bar\nilp, \hess_u)\right) \textup{ by~\eqref{eqn.nilp.cell.prod}}\\
&= \dim\left( C_{zu} \cap \Hess( \mx, \hess ) \right) \textup{ by Proposition~\ref{prop.gendim}}\\
&\leq \dim \left(\Hess(\mx,\hess)\right). 
\end{align*}

It remains to prove $\dim \left(\Hess(\mx, \hess)\right) \leq \dim \left( \Hess(\semi_\tau,\hess)\right)$. 
Suppose $w\in S_n$ such that $ C_w\cap \Hess(\mx,\hess)$ is maximum dimensional. Let $w=zu$ be the unique decomposition with $z\in W_\mu$ and $u\in {^\mu}W$. By Proposition~\ref{prop.gendim}, 
\[
\dim \left(C_w\cap \Hess(\mx,\hess)\right) = \ell_\hess(u)+ \dim \left( C_z \cap \Hess(\bar\nilp, h_u) \right)
\]
and $C_z \cap \Hess(\bar\nilp, h_u)\neq \emptyset$.  Let $z=z_1z_2\ldots z_k$ where $z_i\in W_{\Lint{i}(\gm)}$ for each $1\leq i \leq k$. Note that the isomorphism~\eqref{eqn.nilp.cell.prod} holds here, and each $C_{z_i}\cap \Hess(\nilp_{\lambda^{(i)}}, \hess_u^{(i)})$ is nonempty because $C_z \cap \Hess(\bar\nilp, h_u)$ is. Since $\dim \left(C_w\cap \Hess(\mx,\hess)\right)$ is maximal we have 
\begin{equation}\label{eqn.oppinequality1}
\begin{split}
\dim \left(\Hess(\mx,h )\right) &= \dim \left( C_w\cap \Hess(\mx,\hess) \right)  \\
&= \ell_\hess(u)+ \sum_{i=1}^k \dim \left( C_{z_i} \cap \Hess(\nilp_{\lambda^{(i)}}, \hess_u^{(i)}) \right) .
\end{split}
\end{equation}
Since $ \dim \left( C_w\cap \Hess(\mx,\hess) \right)$ is maximal, it follows from an argument similar to that above that $C_{z_i} \cap \Hess(\nilp_{\lambda^{(i)}}, \hess_u^{(i)})$ is a maximum dimensional cell in $\Hess(\nilp_{\lambda^{(i)}}, \hess_u^{(i)})$ for each $i\in[k]$. Each tableau $T_{z_i,\lambda^{(i)}}$ is $\hess_u^{(i)}$-strict, and by Proposition~\ref{prop.steps}, $T_{z_i,\lambda^{(i)}}$ is also column-increasing. Recall that $\tau^{(i)}=(\lambda^{(i)})^t$. Proposition~\ref{prop.steps}(1) also tells us that, for each $i$, 
\begin{equation}\label{eqn.Czi}
\dim \left( C_{z_i} \cap \Hess(\nilp_{\lambda^{(i)}}, \hess_u^{(i)}) \right) = \dim \left( C_{z_i} \cap \Hess(\semi_{\tau^{(i)}}, \hess_u^{(i)}) \right).
\end{equation}
For each $i$, write $z_i = y_i x_i$ for a unique pair $y_i\in W_{\tau^{(i)}}$ and $x_i \in {^{\tau^{(i)}}}W$. Then by Proposition~\ref{prop.ssdim}, 
\begin{equation}\label{eqn.last.step}
\dim \left( C_{z_i} \cap \Hess(\semi_{\tau^{(i)}}, \hess_u^{(i)}) \right) = \ell(y_i)+ \ell_{\hess_u^{(i)}} (x_i).
\end{equation}
Let $y = y_1 y_2 \cdots y_k$ and $x = x_1x_2 \ldots x_k$. 
Now by~\eqref{eqn.last.step} and~\eqref{eqn.Czi}, the equality ~\eqref{eqn.oppinequality1} becomes 
\begin{align*}
\dim \left(\Hess(\mx,h )\right) &= \ell_\hess(u)+ \sum_{i=1}^k \left(  \ell(y_i)+ \ell_{\hess_u} (x_i) \right) \\
& = \ell_\hess(u)+ \ell(y)+\ell_{\hess_u}(x) \textup{ by~\eqref{eqn.ell.plus.ell.hu}} \\
& = \ell(y) + \ell_\hess(xu) \textup{ by Lemma~\ref{lemma.refinement}.}  
\end{align*}
Note that we can apply Lemma~\ref{lemma.refinement} in the last equality above because $x \in W_\gm$ and $u \in {^\gm}W$.
Then by Proposition~\ref{prop.ssdim},
\[
\dim \left(\Hess(\mx,h )\right) = \dim \left( C_ w \cap \Hess(\semi_\tau,\hess)\right) \leq \dim \left(\Hess(\semi_\tau,\hess)\right).
\]
Thus, we may now conclude $\dim \left(\Hess(\mx,h )\right)  = \dim \left(\Hess(\semi_\tau,\hess)\right)$.
\end{proof}

\begin{proof}[Proof of Theorem~\ref{thm.main.general}] Let $\mx\in \fg_\lambda$. Replacing $\mx$ by a conjugate if need be, we may assume that $\mx$ is $\lambda$-standard; let $\semi_\tau$ be the corresponding diagonal matrix from Construction~\ref{constr.semi.tau}. By Theorem~\ref{thm.ss-nilp}, Lemma~\ref{lemma.sheet.same}, and Proposition~\ref{prop.gendim.equal.ssdim},
\[
\dim \left(\Hess(\mx, \hess)\right) = \dim \left(\Hess(\semi_\tau ,\hess)\right) = \dim \left(\Hess(\nilp_\lambda, \hess)\right).
\]
In particular, every Hessenberg variety defined by $\mx\in \fg_\lambda$ has dimension equal to that of a nilpotent Hessenberg variety corresponding to the unique nilpotent orbit in $\fg_\lambda$. Thus, they must all have the same dimension.
\end{proof}


\section{Open questions}\label{sec.open}

We state some open questions. Recall the set 
\[
\cm_{\lambda, \hess}:= \{w\in S_n : \tabExtra{w,\lambda} \text{ is $\hess$-strict and column increasing}\}.
\]
Suppose the Hessenberg--Schubert cell $C_w\cap \Hess(\nilp_\lambda, \hess)$ in the nilpotent Hessenberg variety $\Hess(\nilp_\lambda, \hess)$ is maximum dimensional. It follows that its closure is a maximum dimensional irreducible component of $\Hess(\nilp_\lambda, \hess)$, and by Proposition~\ref{prop.steps}(2), that $w\in \cm_{\lambda, \hess}$. The same proposition implies $C_w\cap \Hess(\semi_{\lambda^t},\hess)$ is also a maximum dimensional cell, and its closure is therefore a maximum dimensional irreducible component of $\Hess(\semi_{\lambda^t},\hess)$. However, unlike the nilpotent case, the semisimple Hessenberg variety $\Hess(\semi_{\lambda^t}, \hess)$ may also have other maximum dimensional components equal to closures of Hessenberg--Schubert cells indexed by permutations \emph{not} in $\cm_{\lambda, \hess}$. We conclude that $\Hess(\semi_{\lambda^t}, \hess)$ has at least as many maximum dimensional irreducible components as $\Hess(\nilp_\lambda, \hess)$. 

The Poincar\'e polynomial of a variety $\cz$ records the graded dimension of its cohomology ring:
\[
\Poin(\cz;t) := \sum_{i= 0}^{d} \dim \left( H^i(\cz)  \right)t^i, 
\]
where $d=\dim \cz$.
The leading coefficient is precisely the number of maximum dimensional irreducible components of $\cz$. In light of the discussion in the previous paragraph, we make the following conjecture, which has been confirmed to hold with $\mx = \semi_{\lambda^t}$ for all $\lambda\vdash n$ with $n\leq 7$. 

\begin{conjecture} For all $\lambda \vdash n$ and $\mx\in \fg_\lambda$, the Poincar\'e polynomial of $\Hess(\mx, \hess)$ dominates that of $\Hess(\nilp_\lambda, \hess)$. That is, if
\[
\Poin(\Hess(\mx, \hess); t) = \sum_{i\geq 0} a_i t^i \; \text{ and }\; \Poin(\Hess(\nilp_\lambda, \hess); t) = \sum_{i\geq 0 } b_i t^i,
\]
then we have $a_i\geq b_i$ for all $i$.
\end{conjecture}

When $\mx$ is a regular matrix, that is, when $\mx\in \fg_{(n)}$, the conjecture above follows immediately from~\cite[Theorem 35]{Brosnan-Chow}. In this case, there is also a closed formula for the dimension of the corresponding Hessenberg variety, namely
\[
\dim (\Hess(\mx, \hess)) = \sum_{i=1}^n (\hess(i)-i).
\]
This is the ``expected dimension'' for type $A$ Hessenberg varieties (defined using any matrix), in the sense that any type $A$ Hessenberg variety is locally determined by at most $\sum_{i=1}^n (\hess(i)-i)$ equations (see, for example, \cite[Section 4]{Insko-Tymoczko-Woo}). Applying~\cite[Exercise 3.22]{Hartshorne}, any Hessenberg variety of the expected dimension is pure dimensional. This motivates the following:

\begin{problem} Fix $\mx\in \fg_\lambda$ with $\lambda\neq (n)$. Classify the set of all Hessenberg functions $\hess$ such that $\dim \left(\Hess(\mx, \hess) \right)= \sum_{i=1}^n(\hess(i)-i)$.
\end{problem}

If $\Hess(\mx, \hess)$ has the expected dimension, then each of its irreducible components is the closure of a Hessenberg--Schubert cell of maximum dimension. In this case, the affine paving, and methods applied in the paper, can be used to determine all irreducible components of $\Hess(\mx, \hess)$ by identifying these top-dimensional cells. It would be interesting to classify these components explicitly in the ``expected dimension setting'' for $\Hess(\nilp_\lambda, \hess)$ by identifying the corresponding subset of $\cm_{\lambda, h}$. We leave this classification as a subject for future work.

\bibliographystyle{alpha}
\bibliography{ref.bib}

@article {Insko-Tymoczko-Woo,
    AUTHOR = {Insko, Erik and Tymoczko, Julianna and Woo, Alexander},
     TITLE = {A formula for the cohomology and {$K$}-class of a regular
              {H}essenberg variety},
   JOURNAL = {J. Pure Appl. Algebra},
  FJOURNAL = {Journal of Pure and Applied Algebra},
    VOLUME = {224},
      YEAR = {2020},
    NUMBER = {5},
     PAGES = {106230, 14},
      ISSN = {0022-4049,1873-1376},
   MRCLASS = {14M12 (05E99 14M15 14N15)},
  MRNUMBER = {4046236},
MRREVIEWER = {Ryan\ David\ Kinser},
       DOI = {10.1016/j.jpaa.2019.106230},
       URL = {https://doi-org.libproxy.washu.edu/10.1016/j.jpaa.2019.106230},
}

@article {Brosnan-Chow,
    AUTHOR = {Brosnan, Patrick and Chow, Timothy Y.},
     TITLE = {Unit interval orders and the dot action on the cohomology of
              regular semisimple {H}essenberg varieties},
   JOURNAL = {Adv. Math.},
  FJOURNAL = {Advances in Mathematics},
    VOLUME = {329},
      YEAR = {2018},
     PAGES = {955--1001},
      ISSN = {0001-8708,1090-2082},
   MRCLASS = {05E05 (14M15)},
  MRNUMBER = {3783432},
MRREVIEWER = {Marko\ Radovanovi\'c},
       DOI = {10.1016/j.aim.2018.02.020},
       URL = {https://doi-org.libproxy.washu.edu/10.1016/j.aim.2018.02.020},
}

@book {Hartshorne,
    AUTHOR = {Hartshorne, Robin},
     TITLE = {Algebraic geometry},
    SERIES = {Graduate Texts in Mathematics},
    VOLUME = {No. 52},
 PUBLISHER = {Springer-Verlag, New York-Heidelberg},
      YEAR = {1977},
     PAGES = {xvi+496},
      ISBN = {0-387-90244-9},
   MRCLASS = {14-01},
  MRNUMBER = {463157},
MRREVIEWER = {Robert\ Speiser},
}

@article {ADGH,
    AUTHOR = {Abe, Hiraku and DeDieu, Lauren and Galetto, Federico and
              Harada, Megumi},
     TITLE = {Geometry of {H}essenberg varieties with applications to
              {N}ewton-{O}kounkov bodies},
   JOURNAL = {Selecta Math. (N.S.)},
  FJOURNAL = {Selecta Mathematica. New Series},
    VOLUME = {24},
      YEAR = {2018},
    NUMBER = {3},
     PAGES = {2129--2163},
      ISSN = {1022-1824,1420-9020},
   MRCLASS = {14M17 (14M10 14M25)},
  MRNUMBER = {3816501},
MRREVIEWER = {Lucas\ Fresse},
       DOI = {10.1007/s00029-018-0405-3},
       URL = {https://doi.org/10.1007/s00029-018-0405-3},
}

@article {Goldin-Precup,
    AUTHOR = {Goldin, Rebecca and Precup, Martha},
     TITLE = {A flat family of matrix {H}essenberg schemes over the minimal
              sheet},
   JOURNAL = {J. Algebra},
  FJOURNAL = {Journal of Algebra},
    VOLUME = {692},
      YEAR = {2026},
     PAGES = {123--172},
      ISSN = {0021-8693,1090-266X},
   MRCLASS = {14M15 (05E14 13P10)},
  MRNUMBER = {5002214},
       DOI = {10.1016/j.jalgebra.2025.11.026},
       URL = {https://doi.org/10.1016/j.jalgebra.2025.11.026},
}

@incollection {Lusztig,
    AUTHOR = {Lusztig, George},
     TITLE = {On conjugacy classes in a reductive group},
 BOOKTITLE = {Representations of reductive groups},
    SERIES = {Progr. Math.},
    VOLUME = {312},
     PAGES = {333--363},
 PUBLISHER = {Birkh\"auser/Springer, Cham},
      YEAR = {2015},
      ISBN = {978-3-319-23442-7; 978-3-319-23443-4},
   MRCLASS = {20G15 (20G07)},
  MRNUMBER = {3495802},
MRREVIEWER = {William\ M.\ McGovern},
       DOI = {10.1007/978-3-319-23443-4\_12},
       URL = {https://doi.org/10.1007/978-3-319-23443-4_12},
}

@article {Tymoczko2006,
    AUTHOR = {Tymoczko, Julianna S.},
     TITLE = {Linear conditions imposed on flag varieties},
   JOURNAL = {Amer. J. Math.},
  FJOURNAL = {American Journal of Mathematics},
    VOLUME = {128},
      YEAR = {2006},
    NUMBER = {6},
     PAGES = {1587--1604},
      ISSN = {0002-9327,1080-6377},
   MRCLASS = {14M15 (05E15 14L35 20G20)},
  MRNUMBER = {2275912},
MRREVIEWER = {James\ E.\ Humphreys},
       URL =
              {http://muse.jhu.edu/journals/american_journal_of_mathematics/v128/128.6tymoczko.pdf},
}

@article {Precup2013,
    AUTHOR = {Precup, Martha},
     TITLE = {Affine pavings of {H}essenberg varieties for semisimple
              groups},
   JOURNAL = {Selecta Math. (N.S.)},
  FJOURNAL = {Selecta Mathematica. New Series},
    VOLUME = {19},
      YEAR = {2013},
    NUMBER = {4},
     PAGES = {903--922},
      ISSN = {1022-1824,1420-9020},
   MRCLASS = {14L35 (14F25 14M15 17B08)},
  MRNUMBER = {3131491},
MRREVIEWER = {Nicolas\ Perrin},
       DOI = {10.1007/s00029-012-0109-z},
       URL = {https://doi.org/10.1007/s00029-012-0109-z},
}

@book {BB05,
    AUTHOR = {Bj\"orner, Anders and Brenti, Francesco},
     TITLE = {Combinatorics of {C}oxeter groups},
    SERIES = {Graduate Texts in Mathematics},
    VOLUME = {231},
 PUBLISHER = {Springer, New York},
      YEAR = {2005},
     PAGES = {xiv+363},
      ISBN = {978-3540-442387; 3-540-44238-3},
   MRCLASS = {05-01 (05E15 20F55)},
  MRNUMBER = {2133266},
MRREVIEWER = {Jian-yi\ Shi},
}

@article {Ji-Precup2019,
    AUTHOR = {Ji, Caleb and Precup, Martha},
     TITLE = {Hessenberg varieties associated to ad-nilpotent ideals},
   JOURNAL = {Comm. Algebra},
  FJOURNAL = {Communications in Algebra},
    VOLUME = {50},
      YEAR = {2022},
    NUMBER = {4},
     PAGES = {1728--1749},
      ISSN = {0092-7872,1532-4125},
   MRCLASS = {14M15 (05A05 14L35)},
  MRNUMBER = {4391520},
MRREVIEWER = {Praise\ Adeyemo},
       DOI = {10.1080/00927872.2021.1988629},
       URL = {https://doi.org/10.1080/00927872.2021.1988629},
}

 \end{document}